\documentclass[11pt]{amsart}
\usepackage[margin=1.2in]{geometry}
\usepackage[T1]{fontenc}
\usepackage{graphicx}
\usepackage{mathabx}
\usepackage{amsmath, amsthm, amssymb, mathtools, thmtools}
\usepackage{hyperref}
\usepackage[capitalise]{cleveref}
\usepackage{tikz}
\usepackage{xcolor}
\usepackage{nicefrac}
\usepackage{setspace}
\theoremstyle{plain}
\newtheorem{lemma}{Lemma}[section]
\newtheorem{prop}[lemma]{Proposition}
\newtheorem{theorem}[lemma]{Theorem}
\newtheorem{corollary}[lemma]{Corollary}
\newtheorem{question}[lemma]{Question}
\newtheorem{conjecture}[lemma]{Conjecture}

\usepackage{enumerate}
\theoremstyle{definition}
\newtheorem{definition}[lemma]{Definition}

\theoremstyle{remark}

\DeclarePairedDelimiter{\abs}{\lvert}{\rvert}
\DeclarePairedDelimiter{\floor}{\lfloor}{\rfloor}
\DeclarePairedDelimiter{\ceil}{\lceil}{\rceil}

\definecolor{mine}{RGB}{100, 120, 115}

\DeclareTextFontCommand{\cem}{\color{mine}\itshape}

\numberwithin{equation}{section}

\newenvironment{proofofclaim}{\begingroup\begin{proof}[Proof of claim]}{\end{proof}\endgroup}

\newtheorem{claim}{Claim}[lemma]

 \hypersetup{
     colorlinks=true,
     linkcolor=mine,
     filecolor=mine,
     citecolor = mine,      
     urlcolor=mine,
     }

\usepackage{dsfont}
\newcommand{\defn}[1]{{\color{mine}{\emph{#1}}}}
\newcommand{\eps}{\varepsilon}
\newcommand{\EE}{\mathds{E}}
\newcommand{\ZZ}{\mathds{Z}}

\newcommand{\RR}{\mathds{R}}
\DeclareMathOperator{\Bin}{Bin}
\DeclareMathOperator{\dist}{dist}
\DeclareMathOperator{\ex}{ex}
\DeclareMathOperator{\dd}{\hbox{-}}

\let\setminus=\smallsetminus
\let\phi=\varphi

\title{Dirac subgraphs of powers of cycles are Hamiltonian}

\author{Richard Lang}
\address[Richard Lang]{Departament de Matem\`atiques, Universitat Polit\`ecnica de Catalunya, Bar\-ce\-lo\-na, Spain and Centre de Recerca Matem\`atica, Barcelona, Spain}
\email{\texttt{richard.lang@upc.edu}}

\author{Alp M\"uyesser}
\address[Alp M\"uyesser]{University of Oxford, UK}
\email{\texttt{alp.muyesser@new.ox.ac.uk}}

\author{Mathias Schacht}
\address[Mathias Schacht]{Fachbereich Mathematik, Universität Hamburg, Hamburg, Germany}
\email{\texttt{schacht@math.uni-hamburg.de}}

\author{Carl Schildkraut}
\address[Carl Schildkraut]{Department of Mathematics, Stanford University, Stanford, US}
\email{\texttt{carlsch@stanford.edu}}

\begin{document}

\begin{abstract}
    We show that, for every $\eps>0$ and all sufficiently large $k$, any spanning subgraph of the $k$th power of a cycle with minimum degree at least $(1+\eps)k$ contains a Hamilton cycle. This asymptotically settles a conjecture of Espuny D\'iaz, Lichev, and Wesolek.
\end{abstract}

\subjclass[2020]{05C45 (primary); 05C35, 05C38, 05D40 (secondary)}
\keywords{Hamilton cycle, local resilience}

\maketitle

\section{Introduction}

The \textit{local resilience} of a graph with respect to a property $\mathcal{P}$  measures how much the graph needs to be changed from the perspective of some vertex before $\mathcal{P}$ no longer holds. As an example, consider Dirac's theorem, which states that any graph $G$ with minimum degree at least $\abs{V(G)}/2$ contains a Hamilton cycle. Dirac's theorem measures the local resilience of the complete graph with respect to Hamiltonicity: unless half the edges incident on some vertex are deleted, the graph remains Hamiltonian. 

Let us say that $H\subseteq G$ is an \defn{$\alpha$-subgraph} if $d_H(v)\geq\alpha d_G(v)$ for all $v\in V(G)$. With this notation, Dirac's theorem asserts that, for any $\eps>0$ and any $n$, any $(1/2+\eps)$-subgraph of $K_n$ is Hamiltonian. The threshold $1/2$ is a natural target while studying local resilience with respect to properties that imply connectivity because a $(1/2-\eps)$-subgraph that is disconnected can be found simply by considering a random cut (say, for regular graphs with sufficiently large degree). So, in a sense, graphs whose $(1/2+\eps)$-subgraphs are all Hamiltonian display as much local resilience for Hamiltonicity as one can expect. As another example, one may consider $K_{n,n}$, whose $(1/2+\eps)$-subgraphs are also Hamiltonian (this is a well-known extension of Dirac's theorem, see for example~\cite{moon1963hamiltonian}). 

This optimal $1/2$ target can be attained in several natural contexts where the host graph is significantly sparser than $K_n$ or $K_{n,n}$. For example, Lee and Sudakov \cite{lee2012dirac} showed that the random graph $G(n,p)$ inherits the local resilience of $K_n$ by establishing that any $(1/2+\eps)$-subgraph of $G(n,p)$ (where $p\gg \log n / n$) is Hamiltonian with high probability. Very recently, analogous results have been obtained for pseudorandom graphs \cite{draganic2025hamilton}. This latter result implies in particular that, given a group with $n$ elements, a typical generating set with $C\log n$ elements induces a Cayley graph all of whose $(1/2+\eps)$-subgraphs are Hamiltonian. 

On the other hand, there are natural classes of Hamiltonian Cayley graphs whose $(1/2+\eps)$-subgraphs are not all Hamiltonian; the complete tripartite graphs $K_{n,n,n}$ are one example.\footnote{Suppose $n$ is even. One may delete all edges between two subsets of size $>3n/4$ to ensure there exists no matching of size $n/2$ between a certain pair of parts, after which one can observe that there exists no perfect matching in the remaining graph, implying that the graph is not Hamiltonian.} A more extreme example is given by the hypercube, which can be disconnected by removing a perfect matching. Espuny D\'iaz, Lichev, and Wesolek \cite{diaz2024local}, in their work studying resilience of random geometric graphs (to which we will shortly return), put forth the following interesting conjecture. 
\begin{conjecture}\label{conj:mainconj}
    Consider the Cayley graph on $\ZZ/n\ZZ$ with generating set $\{-k,\ldots, k\}\setminus\{0\}$.
    Any subgraph of this Cayley graph with minimum degree $k+1$ has a Hamilton cycle.
\end{conjecture}
The Cayley graph above can also be described as $C_n^k$, the $k$th power of a cycle on $n$ vertices, obtained by adding edges between vertices of distance $d$ in the cycle $C_n$ for each $d\in [1,k]$. So, an equivalent phrasing of the conjecture is that, for any $\beta>1/2$, any $\beta$-subgraph of $C_n^k$ contains a Hamilton cycle.

\cref{conj:mainconj} generalizes Dirac's theorem, as can be seen by taking $k\geq (n-1)/2$: in this case $C_n^k$ is the complete graph $K_n$. If true, \cref{conj:mainconj} would show that $C_n^k$, like $G(n,p)$, inherits the local resilience of $K_n$ with respect to Hamiltonicity. Although $C_n^k$ and $G(n,p)$ both ``imitate'' $K_n$, they do so in fairly different ways. For example, $C_n^k$ is far from being a good expander on a global level, unlike $G(n,p)$. On the other hand, $C_n^k$ locally resembles a complete graph with respect to a one-dimensional geometry, whereas $G(n,p)$ does not typically contain unusually large dense subgraphs.

Our main result confirms \cref{conj:mainconj} asymptotically.
\begin{theorem}\label{thm:main}
    There is a constant $k_0$ such that, for every $\eps>0$ and all integers~$n$ and $k$ with 
    $n\geq k\geq \max\{\eps^{-121},k_0\}$, all $(1/2+\eps)$-subgraphs of $C_n^k$ are Hamiltonian.
\end{theorem}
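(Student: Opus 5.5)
We plan to use the absorption method, adapted to the one-dimensional geometry of $C_n^k$. Let $G$ be a $(1/2+\eps)$-subgraph of $C_n^k$, so $\delta(G)\ge(1+2\eps)k$.

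First, if $n\le Ck$ for a suitable constant $C=C(\eps)$, then $G$ is a dense graph on $n$ vertices. Here we would argue directly with a robust-expansion or Pósa rotation–extension argument restricted to windows. The more important regime is $n\gg k$, and we treat it as follows. Partition the cycle $\ZZ/n\ZZ$ into consecutive intervals (``blocks'') $B_1,\dots,B_m$, each of length $\ell=\Theta(k)$, say $\ell=k/2$. Consider the graph $G[B_i\cup B_{i+1}\cup B_{i+2}]$ formed by a constant number of consecutive blocks. Every vertex sends at least $(1/2+\eps)$ of its $2k$ potential neighbours into $G$. Hence, in the window of radius $k$ around any vertex, $G$ behaves like a Dirac-type graph: each vertex $v$ has more than half of the interval $[v-k,v+k]$ as neighbours.

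The key local step is a \emph{connecting lemma}. For vertices $x,y$ at distance roughly $k$ in the cycle, and a small set $F$ of forbidden vertices, there should be a short $x$–$y$ path in $G-F$ that uses only vertices between $x$ and $y$ (up to $O(k)$ overhang). We expect this to follow from the fact that two vertices $u,w$ with $|u-w|\le k$ have at least $2\eps k$ common neighbours, by inclusion–exclusion inside the window $[u-k,w+k]$, which has size at most $3k$. The difficulty is that neighbourhoods can be lopsided: a vertex may have almost all its neighbours on one side. For this reason we would first classify vertices as ``left-heavy'' or ``right-heavy'' and ensure the common-neighbourhood argument still yields progress along the cycle.

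With this in hand, the global structure proceeds in four steps:
\begin{enumerate}[(i)]
\item Reserve a random set $R$ of density $\eps^{O(1)}$, which by concentration (this is where $k\ge\eps^{-121}$ enters) keeps a $(1/2+\eps/2)$-fraction of every vertex's neighbourhood inside every window.
\item Build an absorbing path $P_{\mathrm{abs}}$ as a concatenation, around the whole cycle, of local absorbers. Each vertex $v$ gets $\Omega(k)$ absorbing gadgets within distance $O(k)$, for example edges $ab$ of $P_{\mathrm{abs}}$ with $a,b\in N(v)$. Such gadgets exist because $|N(v)|>k$ inside a window of size $2k$, so $N(v)$ contains many edges of $G$ by a local Dirac count.
\item Cover the remaining vertices by few long paths that proceed monotonically around the cycle, using a local regularity or matching argument block by block.
\item Link everything using $R$ and the connecting lemma, and absorb the leftover vertices.
\end{enumerate}
Since everything must progress around the cycle, the linking is done in cyclic order. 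This ensures the final object is a single Hamilton cycle rather than several cycles.

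The main obstacle we expect is step (iii) together with the parity and orientation issues of the connecting lemma. A $(1/2+\eps)$-subgraph can locally look bipartite-like or split-like, with near-extremal structure. One example is a window in which $G$ is close to two cliques joined sparsely, another is a window in which $G$ is close to a complete bipartite graph with unequal sides. In such windows, covering with paths that traverse the window fails unless vertices are imported from neighbouring windows. We would handle this with a stability dichotomy. Each window is either ``robustly expanding'', in which case a local Hamilton path exists between prescribed ends, or close to one of these extremal configurations. In the extremal case the excess degree $\eps k$ forces many edges leaving the window, and these edges are used to balance the parts across adjacent windows. Making these local corrections consistent around the whole cycle is, we expect, the heart of the proof.
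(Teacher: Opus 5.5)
Your overall architecture (reservoir, absorber, path cover, linking) matches the paper's. There is, however, a genuine gap at the step you yourself flag: (iii) is not supplied, and the ``stability dichotomy'' is neither defined nor shown to close up consistently around the cycle. Blocks of length $\Theta(k)$ force no local structure. With your $\ell=k/2$, one may delete every edge inside every block and all degrees stay above $3k/2$. Moreover, whether $G$ even has a perfect matching is a global Hall-type question; it genuinely fails for powers of paths (\cref{counterexample}).

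The missing idea is \cref{thm:hall}: every spanning subgraph of $C_n^k$ with $\delta\ge k+1$ satisfies $\abs{N(S)}\ge\abs{S}$ for all $S$. Its proof is a wrap-around argument that uses the cyclic structure. For independent $S$, repeatedly jump to the first vertex of $S$ beyond the right $k$-window of the current one until the sequence closes; these windows then cover $S$ exactly $t$ times, and one double-counts left neighbourhoods. From Hall's condition the paper proceeds as follows:
\begin{itemize}
\item the bipartite double cover gives cycle factors, and peeling off about $\alpha k/2$ of them ($\alpha=2\eps$) yields an $\ell$-regular spanning multigraph with $\ell\approx\alpha k$;
\item a local-lemma split into $c\approx k^{1/12}/100$ distributed classes makes consecutive classes span near-regular bipartite graphs;
\item in these, a nibble (\cref{lem:nice-matching}) finds matchings whose uncovered vertices are locally sparse, and chaining them gives \cref{lem:path-decomp}.
\end{itemize}
No extremal-case analysis is needed. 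The resulting paths are many short parallel strands, not a few monotone ones. So ``linking in cyclic order'' does not bound how often the reservoir is used near a point. The paper needs \cref{lem:path-order}, which pairs endpoints so that the connecting intervals cover each point at most twelve times.

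Two of your local claims are also false as stated.
\begin{itemize}
\item \emph{Common neighbours.} Inclusion--exclusion in $[u-k,w+k]$ only gives $\abs{N(u)\cap N(w)}\ge 4\eps k-\abs{u-w}-1$ (\cref{lem:nbhd-inter}), which is vacuous at distance $k$. Deleting the edges from $0$ to $(2\eps k,k]$ and from $k$ to $[0,k-2\eps k)$ leaves a valid $G$ in which $0$ and $k$ have no common neighbour. This is easily repaired by connecting in steps of length at most $2\eps k$, after which no left/right-heavy classification is needed.
\item \emph{Absorbing gadget.} This is the more serious one. Your gadget (an edge $ab$ of $P_{\mathrm{abs}}$ with $a,b\in N(v)$) needs $v$ to lie in triangles, but $\abs{N(v)}>k$ does not force $N(v)$ to span any edge. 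Delete from $C_n^k$ the edges from $v=0$ to $(-k/2+2\eps k,\,k/2-2\eps k)$ and all edges inside $N:=[-k,-k/2+2\eps k]\cup[k/2-2\eps k,k]$. For small $\eps$ every vertex keeps degree at least $(1+2\eps)k$, yet $N(v)=N$ is independent, and this can be planted in every interval of length $3k$. This is why the paper uses beehives (\cref{def:beehive}). They need only an edge $sx$ and paths between nearby vertices, built inside distributed reservoirs via \cref{cor:path-connect}.
\end{itemize}
Finally, the case $n\le Ck$ is not easier: for $n>2(1+2\eps)k$ the classical Dirac condition fails. The paper handles all $n\ge k$ uniformly, with no separate case.
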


\noindent The dependence of $k$ on $\eps$ means that \cref{thm:main} is slightly weaker than \cref{conj:mainconj} in the minimum degree required. It remains open to resolve \cref{conj:mainconj} in an exact form (even for sufficiently large $k$).

Although our \cref{thm:main} is along somewhat different lines than most prior work on resilience of Hamiltonicity, our work is linked to prior work (e.g., \cite{draganic2025hamilton,lee2012dirac}) in the sense that they belong to a common class of results that point towards a \textit{rough structure theory} of Hamiltonian graphs, which we now briefly explain. Hamiltonian graphs necessarily contain (fractional) perfect matchings and are connected. Of course, such easily checkable conditions cannot be sufficient to guarantee Hamiltonicity, due to NP-hardness of the Hamilton cycle problem.\footnote{The Petersen graph gives a simple example of a connected graph with a perfect matching but no Hamilton cycle.} However, it turns out that if connectivity and the existence of a fractional perfect matching hold \textit{robustly},\footnote{One reasonable interpretation of this asserts that the listed properties hold for typical induced subgraphs of density large enough to prevent isolated vertices; see~\cite{lang2023tiling}.} then the existence of a Hamilton cycle can be established in various settings. Each of \cite{lee2012dirac, draganic2025hamilton} and our \cref{thm:main} can be thought of in this general framework. In \cref{subsec:sketch}, we discuss our proof methodology, which is guided by this perspective. 

We also remark that in recent works of Lang~\cite{lang2023tiling} and Lang and Sanhueza-Matamala~\cite{lang2024hypergraph}, the rough structure theory for Hamiltonicity is developed for dense (hyper-)graphs, unifying a fair amount of previous literature. Although our methods here are different, our work could be loosely thought of as an extension of \cite{lang2024hypergraph} (in the simpler context of $2$-uniform graphs) from~$K_n$ to $C_n^k$. Put differently, our focus here is on extending the rough structure theory from dense graphs to graphs that are locally dense (but globally sparse). The power of a cycle~$C_n^k$ is just one example of a graph which is locally dense and globally sparse, and it remains an interesting direction to investigate higher-dimensional generalizations of \cref{conj:mainconj}. By this, we mean that $C_n^k$ has a simple $1$-dimensional geometry. What can we say about graphs with the locally dense geometry of a $2$-dimensional object? This brings us to the subject of random geometric graphs, which was the original motivation behind \cref{conj:mainconj}.

\subsection{Random geometric graphs}

\cref{conj:mainconj} was motivated by analogous questions concerning random geometric graphs (to our knowledge, first raised by Frieze \cite[Problem 43]{Frieze2019HamiltonRandomGraphsBibliography}). To explain the connection, let us define the random graph $T_d(n,r)$ as follows: sample $n$ random points of the $d$-dimensional torus $(\RR/\ZZ)^d$, and add an edge between any two vertices whose Euclidean distance is at most $r$. Below is an intriguing conjecture posed by Espuny D\'iaz, Lichev, and Wesolek \cite{diaz2024local}. 
\begin{conjecture}\label{conj:geometric}
For every $\eps\in (0,1/2]$ and integer $d\geq 1$, there is a constant $C$ such that for $r\geq C(\log n / n)^{1/d}$ such that $T_d(n,r)$ satisfies with high probability that all of its $(1/2+\eps)$-subgraphs are Hamiltonian.
\end{conjecture}
\cref{thm:main} confirms the above conjecture when $d=1$. This is because for the given range on $r$ in the conjecture, $T_1(n,r)$ can, with high probability, be ``sandwiched'' between~$C_n^{(1-\eps)r}$ and $C_n^{(1+\eps)r}$ (see \cite[Lemma 1.12]{diaz2024local}).\footnote{While finishing this paper, we found that \cref{conj:geometric} is false for $d=2$; the crucial construction was suggested to us by ChatGPT 5.5 Pro. We discuss this further in \cref{app:counterexample}. It remains an interesting problem to determine the correct threshold for local resilience for higher-dimensional geometric random graphs.}

Espuny D\'iaz, Lichev, and Wesolek~\cite{diaz2024local} also conjectured an analogue of \cref{conj:geometric} where~$(\RR/\ZZ)^d$ is replaced by $[0,1]^d$. When $d=1$, this conjecture reduces to investigating the local resilience of $P_n^k$, the $k$th power of an $n$-vertex path (by an appropriate analogue of~\cite[Lemma 1.12]{diaz2024local}), and different forms of this conjecture were investigated further in \cite{EspunyDiaz2026DiracBandwidth} by working with stronger assumptions on the degree of the vertices near the boundary of the path. We show that this latter conjecture is false, so the natural analogue of \cref{conj:geometric} fails for $[0,1]^d$ when $d=1$, for a specific choice of~$r$.

\begin{prop}\label{counterexample}
For some absolute constant $\eps>0$ and some positive integers $k$ and $n$, there exists a $(1/2+\eps)$-subgraph of $P_n^k$ which does not contain a perfect matching (or a Hamilton cycle).
\end{prop}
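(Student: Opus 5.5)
The plan is to exhibit a $(1/2+\eps)$-subgraph $H$ of $P_n^k$ that is \emph{disconnected}, with $n$ even and both components of odd order. Then $H$ has no perfect matching, and hence no Hamilton cycle. A Hamilton cycle would also need connectivity, but the parity obstruction covers both at once.

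The interior of $P_n^k$ looks like $C_n^k$. There I expect no splitting into two sides in which every vertex keeps a $(1/2+\eps)$-fraction of its neighbours on its own side, since \cref{thm:main} says these degrees force Hamiltonicity. So the obstruction has to live at an end of the path, where vertex $i\le k$ has degree only $k+i-1$ and its neighbourhood $[1,i+k]$ is one-sided. Concretely, I will take a set $L\subseteq[1,Ck]$ near the left end with $|L|$ odd and set $R=V\setminus L$. I will then let $H$ consist of all edges of $P_n^k$ inside $L$ and all edges inside $R$. The requirement is that every $v\in L$ has at least $(1/2+\eps)d_{P_n^k}(v)$ neighbours in $L$, and likewise for $R$. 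Vertices of $R$ far from $[1,Ck]$ lose nothing, so only a window of length $O(k)$ at the left end matters, and $n$ can be any large even number.

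\textbf{Step 1: continuous profile.} Rescale positions by $k$ and describe $L$ by a density profile $h\colon[0,\infty)\to[0,1]$ with $h=0$ beyond some $c$. Write $M(t)=\int_0^t h$. A point $x$ has window $W(x)=[\max(0,x-1),x+1]$ of length $|W(x)|$, and I need
\[
\int_{W(x)}h\ \ge\ (1/2+\eps)\abs{W(x)} \text{ for } x\in\operatorname{supp}_L,\qquad \int_{W(x)}h\ \le\ (1/2-\eps)\abs{W(x)} \text{ for } x\in\operatorname{supp}_R .
\]
Quick checks show that the naive choices are borderline or fail. The interval $L=[0,c]$ gives exactly $1/2$ for the first $R$-vertex. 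An interval followed by a block of constant density $t$ fails because an $L$-point and an $R$-point with $x>y$ in $[0,1]$ have nested windows $[0,y+1]\subset[0,x+1]$. For windows starting at $0$, what is needed is that $M(s)-s/2$ is below $-\eps s$ at the right endpoints $s$ of $R$-points and above $\eps s$ at those of $L$-points. This suggests alternating blocks: $R$-points should sit where $M(s)/s$ is small, and $L$-points where the cumulative $L$-density has since risen above $1/2$.

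So my first attempt is a finite step function: $h$ is $0$ or $1$ on $O(1)$ intervals in $[0,c]$ with $c$ a little above $1$, with $R$-blocks placed early in $[0,1]$. I will then write the finitely many linear inequalities at block endpoints (it suffices to check extreme points, since the window integrals are piecewise linear in $x$) and solve them with some slack $\eps$. \textbf{Finding a feasible profile with positive slack is the main obstacle.} It may require non-interval $L$ with several alternations, or intermediate densities. If hand analysis fails, I would set it up as a linear program over a fine grid and extract a rational solution.

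\textbf{Step 2: discretization.} Given a feasible profile with slack $2\eps$, realize density $t$ on a block by a periodic pattern (for example, every $q$th vertex with the appropriate rational frequency). For $k$ large, every discrete window count is within $o(k)$ of its integral, so the inequalities hold with slack $\eps$. Finally, adjust $|L|$ to be odd by moving a single vertex; this changes any degree by at most $1$, which the slack absorbs. Taking $n$ even then makes $|R|$ odd, and $H$ has no perfect matching.
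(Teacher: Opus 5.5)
Your approach cannot work. The linear program in Step 1 is infeasible. More strongly, once $k\ge 1/(4\eps)$ and $n>k$, $P_n^k$ has \emph{no} disconnected $(1/2+\eps)$-subgraph at all, so Step 2, which needs $k$ large, lies entirely in the forbidden regime. The obstruction is purely local between consecutive vertices, so the end of the path does not help.

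Suppose $V=L\sqcup R$ with both parts non-empty and no edges of $H$ between them. Then some consecutive $x,x+1$ lie on opposite sides, say $x\in L$ and $x+1\in R$ (the other case is symmetric). In $P_n^k$ we have $N(x)\setminus N(x+1)\subseteq\{x-k,x+1\}$ with $x+1\in R$, while $x\in (N(x+1)\setminus N(x))\cap L$. Hence $|N(x)\cap L|\le|N(x+1)\cap L|$, and also $d(x+1)\le d(x)+1$. Your two requirements then give
\[
(1/2+\eps)\,d(x)\le |N(x)\cap L|\le |N(x+1)\cap L|\le (1/2-\eps)\,d(x+1)\le (1/2-\eps)\bigl(d(x)+1\bigr),
\]
so $d(x)<1/(4\eps)$. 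This contradicts $d(x)\ge k$, which holds since $n>k$.

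In your rescaled model this is just continuity. The function $F(x)=|W(x)|^{-1}\int_{W(x)}h$ is Lipschitz in $x$, also near $0$ where $W(x)$ is one-sided, since $|W(x)|\ge 1$. So on the connected set $[0,\infty)$, $F$ cannot take values only in $[0,1/2-\eps]\cup[1/2+\eps,1]$ while attaining both pieces. Adding alternations only creates more crossing points. A block of intermediate density demands both inequalities for essentially the same window, which is immediately contradictory.

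What is missing is an obstruction in which only one class of vertices is constrained: a Hall-type obstruction rather than a disconnection. This is what the paper uses. Take $n\equiv 0\pmod 4$ and $k=\rho n$ with $\rho\in(1/2,3/4)$. Let $A$ and $C$ be the $n/4+1$ vertices nearest each end, let $B$ be the remaining $n/2-2$ vertices, and keep exactly the edges meeting $B$.

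Vertices of $B$ keep all their edges. A vertex $i\in A$ has degree $i+k-1$, and because $k>n/2$ its fraction of neighbours in $B$ is at least roughly $\min\{1-1/(4\rho),\,2/(1+4\rho)\}>1/2$. The same holds symmetrically for $C$. So $A\cup C$ is an independent set with $N(A\cup C)=B$ and $|B|<|A\cup C|$, and there is no perfect matching even though $n$ is even.

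The constrained vertices here need a majority \emph{outside} their own class, so the consecutive-vertex argument above does not apply. The construction also relies on both ends of the path and on $k$ being linear in $n$, which a cut near one end of a long path cannot reproduce.
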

\begin{proof}

Let $n$ be a multiple of $4$, and take $k=\rho n$ for some absolute constant $\rho\in(1/2,3/4)$.
Denote by $A$ the $n/4+1$ vertices nearest one of the endpoints of $P_n^k$, denote by $C$ those $n/4+1$ vertices nearest the other endpoint, and denote by $B$ the remaining ($n/2-2$) vertices. Consider the spanning subgraph $G$ of $P_n^k$ consisting of all edges incident on $B$. 
It is easy to verify that, for some $\eps>0$ depending only on $\rho$, the graph $G$ is a $(1/2+\eps)$-subgraph of $P_n^k$. However, $A$ and $C$ are independent sets whose mutual neighborhood (which is $B$) is strictly smaller than $\abs{A\cup C}$, implying that the subgraph cannot contain a perfect matching.
\end{proof}

\subsection{Finding non-spanning subgraphs in \texorpdfstring{$C_n^k$}{Cnk}}\label{sec:other-q}

The previous sections of this introduction have focused solely on minimum-degree thresholds for Hamiltonicity.
We now turn to other questions within the power of a cycle.

Recall that any graph on $n$ vertices with minimum degree above $\frac{t-2}{t-1}$ must contain a $K_t$ as a subgraph.\footnote{This follows simply by the pigeonhole principle, and is best possible, as witnessed by a complete balanced $(t-1)$-partite graph.} Equivalently, to eliminate all copies of $K_t$ in a $K_n$, one must necessarily delete at least a $\frac{1}{t-1}$ fraction of the edges incident to some vertex of the $K_n$. It turns out that copies of $K_t$ are more fragile in $C_n^k$: one can delete at most a $\frac{1}{2(t-2)}$ fraction of the edges incident on vertices of $C_n^k$ and eliminate all copies of $K_t$ (for $t>3$, the latter fraction is strictly smaller). The following result formalizes this assertion, and also shows that it is best possible.

\begin{theorem}\label{thm:dirac-clique}
    For each integer $t\geq 3$, every subgraph of $C_n^k$ with minimum degree strictly exceeding $(2-\frac1{t-2})k+1$ contains $K_t$ as a subgraph.
    Moreover, this bound is tight under the mild divisibility conditions $(t-2)\mid k$ and $\frac k{t-2}\mid n$ and the mild size condition $n>2k+\frac{2k}{t-2}$.
\end{theorem}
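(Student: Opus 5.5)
We bound the clique from below by a greedy construction that keeps the chosen vertices inside a short arc, and we show tightness by deleting the edges inside consecutive blocks of length $k/(t-2)$.

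\emph{Setup.} Since the bound is vacuous otherwise, we may assume $n>2k$. Then $C_n^k$ is $2k$-regular and any two vertices at cyclic distance at most $k$ are adjacent in it. Let $G\subseteq C_n^k$ have $\delta(G)>(2-\frac1{t-2})k+1$. Then every vertex $x$ has fewer than $m:=\frac{k}{t-2}-1$ non-neighbours in $G$ among its $2k$ neighbours in $C_n^k$. We build a clique $x_1,\dots,x_t$ whose vertices all lie in an arc of length at most $k$, choosing them one at a time.

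\emph{Greedy step.} Suppose $x_1,\dots,x_j$ form a clique in $G$ and lie in an arc $[a,b]$ of length $L_j=b-a\le k$. The vertices adjacent in $C_n^k$ to all of them form the arc $[b-k,a+k]$ with $x_1,\dots,x_j$ removed. Deleting the $G$-non-neighbours of each $x_i$ leaves a set $S_j$ of common $G$-neighbours with
\[
|S_j| \;>\; 2k-L_j+1-j-jm \;=\; 2k-L_j+1-\tfrac{jk}{t-2}.
\]
We add the vertex of $S_j$ that makes the new span $L_{j+1}$ as small as possible. Every point of $[b-k,a+k]$ that would give span less than $L_{j+1}$ lies in a sub-arc of length about $2L_{j+1}-L_j$, so $S_j$ avoids it. Hence $|S_j|\le 2k-2L_{j+1}+O(1)$, which gives the recursion
\[
L_{j+1}\lesssim \tfrac{L_j}{2}+\tfrac{jk}{2(t-2)},\qquad L_1=0.
\]
The last vertex needs no span control, only $S_{t-1}\neq\emptyset$. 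By the displayed bound this holds as soon as $L_{t-1}\le \frac{(t-3)k}{t-2}$.

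\emph{Main obstacle.} Writing $L_j=(j-d_j)\frac{k}{t-2}$, the recursion gives $d_{j+1}=1+d_j/2$ with $d_1=1$. So $d_j\uparrow 2$, but $d_j<2$ for every $j$. The naive recursion therefore misses the required bound $L_{t-1}\le(t-3)\frac{k}{t-2}$ by a vanishing but nonzero margin. This is exactly where the sharp constant $2-\frac1{t-2}$ lives, so the crude accounting must be tightened. My first attempt would keep the exact integer terms: the $+1$'s, the strict inequality in $\delta(G)$, and the fact that the $x_i$ themselves were counted in both the removed set and the non-neighbourhoods. If that is not enough, I would replace the greedy choice by a two-sided one: fix $x_1$ and insist that every later vertex lies in $[x_1,x_1+k]$ or in $[x_1-k,x_1]$. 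I would then pigeonhole over which side receives each vertex, so that the deficits of vertices on opposite sides do not both cut into the same region. Proving this step sharply is the crux of the lower bound.

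\emph{Tightness.} Let $b=\frac{k}{t-2}$, which divides $n$ by assumption, and partition the cycle into $n/b$ consecutive blocks of $b$ vertices. Delete all edges inside each block; the blocks have length $b\le k$, so these are edges of $C_n^k$. Every vertex loses exactly $b-1$ neighbours, so
\[
\delta=2k-b+1=\Bigl(2-\tfrac1{t-2}\Bigr)k+1.
\]
The condition $n>2k+2b$ rules out wrap-around, so any clique of $C_n^k$ lies in an arc of $k+1$ consecutive vertices. Such an arc meets at most $k/b+1=t-1$ blocks. Vertices of a clique in $G$ lie in distinct blocks, so $G$ has no $K_t$.
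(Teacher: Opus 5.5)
\textbf{The lower bound is not proved.} This is the substance of the theorem. Your own computation shows that the span-minimising greedy only gives $|S_{t-1}|>1-2^{2-t}\frac{k}{t-2}$, which is negative by an amount linear in $k$.

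The corrections you propose to track (the $+1$'s, strictness of the degree bound, the double-counted $x_i$) are $O(1)$ per step, so $O(t)$ in total. They cannot close a deficit of order $k$.

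The two-sided variant is not yet an argument. Every clique through $x_1$ already lies in $[x_1-k,x_1+k]$, so the side restriction adds no information by itself, and you do not say what the pigeonhole would give.

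\textbf{The missing idea is a global extremal choice instead of a vertex-by-vertex build.} The paper proceeds as follows.
\begin{itemize}
\item Induct on $t$ to obtain a $K_{t-1}$. Take one, $Y$, whose longest edge $xz$ is as short as possible.
\item Let $S$ be the set of vertices other than $x,z$ that lie in some $K_{t-1}$ with longest edge $xz$, and set $m=|S|$.
\item $G[S]$ is $K_{t-2}$-free, since otherwise adding $x,z$ gives a $K_t$. By Tur\'an's theorem some vertex of $S\subseteq I(x,z)$ misses at least $\frac{m}{t-3}-1$ vertices of $S$. Hence $\delta\le 2k+1-\frac{m}{t-3}$.
\item Double count non-adjacent pairs $(y,v)$ with $y\in Y$ and $v\in I_k(y)$. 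Each $v\in I_k(x)\cap I_k(z)$ misses at least one vertex of $Y$, or else there is a $K_t$.
\item By minimality of $xz$, each $v\in I(x,z)\setminus(S\cup\{x,z\})$ misses at least two vertices of $Y$. If it missed only $x$ (or only $z$), it would form a $K_{t-1}$ with a strictly shorter longest edge. If it missed only some $y_i$, it would belong to $S$.
\item This gives $(t-1)(2k+1-\delta)\ge 2k-m$. Eliminating $m$ against the Tur\'an bound yields exactly $\delta\le 2k+1-\frac{k}{t-2}$.
\end{itemize}
It is this minimality of the longest edge, upgrading one forced miss to two inside $I(x,z)$, that your scheme has no substitute for.

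\textbf{Tightness.} Your construction is the paper's, and your degree count $2k-(b-1)$ is correct. However, the $K_t$-freeness justification is false as stated. For $t\ge 5$ the hypothesis $n>2k+2b$ allows $n\le 3k$, and then cliques of $C_n^k$ can wrap around. For example, with $t=5$ and $n=3k$, the vertices $0,k,2k$ form a triangle in $C_n^k$ that lies in no arc of $k+1$ vertices.

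The conclusion still holds at the level of blocks. Two blocks separated by at least $t-2$ full blocks in both directions contain no adjacent pair. Among any $t$ of the $N=n/b\ge 2t-1$ blocks, two such blocks exist; equivalently, $C_N^{t-2}$ has clique number $t-1$. This is the pigeonhole step the paper uses.

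\textbf{A smaller point.} Your reduction to $n>2k$ is not justified by the statement being vacuous; for $t=3$ and $n=2k$ it is not. That case instead follows from Tur\'an's theorem, since $C_n^k=K_n$ there.
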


Related results were obtained in the work of Espuny D\'iaz, Lichev, and Wesolek~\cite[Section~4]{diaz2024local} for cycles instead of cliques.

We additionally pose a Tur\'an-type problem relative to $C_n^k$ (which can be viewed as a \textit{global} resilience problem, see the discussion in \cite{sudakov2008local}).

\begin{definition}\label{def:extremal}
    Let $F$ and $G$ be graphs.
    The \emph{extremal number $\ex(G,F)$ of $F$ in $G$} is the maximum number of edges in an $F$-free subgraph of $G$.
    We define the \emph{extremal density of $F$ in powers of cycles} to be
    \[\pi_{\mathrm{cyc}}(F)=\limsup_{k\to\infty}\limsup_{n\to\infty}\frac{\ex(C_n^k,F)}{e(C_n^k)}.\]
\end{definition}

\noindent Thus, $\pi_{\mathrm{cyc}}(F)$ measures the maximum edge density of an $F$-free subgraph of $C_n^k$, in the regime where $k$ is large and $n$ is much larger.

The Erd\H{o}s--Stone--Simonovits theorem states that $\ex(K_n,F)=(1-\frac1{\chi(F)-1}+o(1))\binom n2$ as $n$ grows, where $\chi$ denotes the chromatic number.
Therefore, the extremal density of a graph $F$ relative to complete graphs is exactly $1-\frac1{\chi(F)-1}$.

Perhaps surprisingly, the extremal density of a graph in powers of cycles does not generally equal its extremal density in complete graphs. Indeed, we have the following result.

\begin{prop}\label{prop:turan-K3}
    It holds that
    \[0.585\approx 2-\sqrt2\leq \pi_{\mathrm{cyc}}(K_3)\leq\frac35=0.6.\]
    More generally, we have
    \[\pi_{\mathrm{cyc}}(K_t)\geq t-1-\sqrt{(t-2)^2+1}=1-\frac1{2(t-2)}+\frac1{8(t-2)^3}-O\left((t-2)^{-5}\right).\]
    In particular, for every $t\geq 3$ and for large $k$ and larger $n$, the densest $K_t$-free subgraph of $C_n^k$ is denser than that guaranteed by the Dirac-type result \cref{thm:dirac-clique}.
\end{prop}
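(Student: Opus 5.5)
The proposition has three parts: two constructive lower bounds (the $K_3$ bound is the case $t=3$ of the general one) and one averaging upper bound. I would treat them separately.

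\textbf{Lower bounds.} I would build a periodic $K_t$-free subgraph of $C_n^k$ with one real parameter $\alpha$ and optimize over $\alpha$.
\begin{itemize}
\item Cut the cycle into consecutive intervals of length about $\alpha k$, grouped into periodic blocks.
\item Inside each block, split the vertices into $t-1$ consecutive classes.
\item Delete every edge of $C_n^k$ joining two vertices of the same class.
\item Delete every edge between two blocks that would let a clique use a class twice, or borrow two classes from different blocks.
\end{itemize}
$K_t$-freeness should then follow from a pigeonhole argument. By the one-dimensional geometry, any $t$ pairwise adjacent vertices lie in a window of length at most $k$. If the intervals are long enough, such a window meets at most $t-1$ classes in a way that is consistent with the kept edges.

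Computing the density reduces to integrating the probability that a random edge of length $\ell\in[1,k]$ survives. That probability is piecewise linear in $\ell/(\alpha k)$, so the density is an explicit rational function of $\alpha$. I expect the optimum to solve a quadratic equation, producing the value $t-1-\sqrt{(t-2)^2+1}$; for $t=3$ this is $2-\sqrt2$. The Taylor expansion and the comparison with the threshold $2-\frac1{t-2}$ of \cref{thm:dirac-clique} are then routine.

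I am not certain of the exact cross-block rule. My plan is to choose it so that the density equation is quadratic, and then check $K_t$-freeness by following a clique from its leftmost vertex through the window.

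\textbf{Upper bound $\pi_{\mathrm{cyc}}(K_3)\le 3/5$.} I would use a weighted averaging argument. The key fact is Mantel's theorem: a triangle-free subgraph of $K_5$ has at most $6$ of its $10$ edges, exactly a $3/5$ fraction.

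Suppose we find a probability distribution on $5$-cliques of $C_n^k$ (each a $5$-set inside a window of length $k$) under which every edge of $C_n^k$ is covered with the same expected multiplicity, up to $o(1)$ as $k\to\infty$. Then averaging the edge count of a triangle-free $H\subseteq C_n^k$ over these cliques gives $e(H)\le(3/5+o(1))e(C_n^k)$.

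The difficulty is that uniformly random $5$-sets in a window cover short edges much more often than edges of length close to $k$. I would therefore allow the $5$-sets to be drawn from a distribution over relative positions, and solve for a density on the four gaps so that the induced distribution of pair lengths is uniform on $[0,k]$. If a pure $K_5$ mixture cannot achieve exact uniformity, I would mix in other small subgraphs of $C_n^k$ whose triangle-free edge density is also at most $3/5$. Examples are $K_5$ minus an edge, or suitable circulant configurations.

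\textbf{Main obstacle.} I expect the hardest step to be finding this fractional decomposition, i.e.\ equalizing edge-length coverage by structures all having Mantel ratio at most $3/5$. My first attempt would be a continuous linear-programming formulation over gap distributions of $5$-point configurations, followed by guessing an explicit solution.
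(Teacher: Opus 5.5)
Both halves of the plan stop before the step that carries the content.

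\textbf{Lower bound.} You never fix the construction (you say the cross-block rule is open), so neither $K_t$-freeness nor the value $t-1-\sqrt{(t-2)^2+1}$ is established. The rule you need is the simplest one: identify same-numbered classes across all blocks. That is, colour consecutive intervals of length $\ell=\lfloor\beta k\rfloor$ cyclically with $t-1$ colours and keep exactly the bichromatic edges of $C_n^k$. Take $n$ divisible by $(t-1)\ell$, which suffices for a $\limsup$. This is then the paper's graph, the blow-up of $C_{n/\ell}^{t-2}$ intersected with $C_n^k$. It is $(t-1)$-partite, so it is $K_t$-free without any window argument. No further cross-block deletions are needed, and any would only cost density. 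For $k/(t-1)\le\ell\le k/(t-2)$, an edge of length $x$ survives with probability $\min(x/\ell,\,1,\,t-1-x/\ell)$. The density is therefore $(t-2)\beta-\frac{1}{2\beta}((t-1)\beta-1)^2+O(1/k)$, matching the paper's count $e(G)=n(t-2)\ell-\frac n\ell\binom{(t-1)\ell-k}{2}$. It is maximized at $\beta=((t-2)^2+1)^{-1/2}$, giving the claimed value.

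\textbf{Upper bound.} Your reduction is the paper's: Lemma~\ref{lem:K3-linprog-connection} plus averaging Mantel over a random $5$-point configuration whose pairwise distances are uniform. But the existence of such a distribution is the whole content, and you leave it to guessing. The paper obtains it from three ingredients.
\begin{enumerate}[(i)]
\item Allow multisets. Coincident points are non-adjacent copies in a blow-up, so Mantel still allows at most $6$ edges, and distance-$0$ pairs waste only an $O(1/k)$ fraction. One then demands that each $X_i-X_j$ ($i\ne j$) be \emph{exactly} uniform on $\{-k,\dots,k\}$. This admits tiny exact solutions: a uniformly random permutation of $(0,0,1,2,2)$ for $k=2$, and an explicit mixture for $k=4$.
\item Take the product $X_i=(2k_2+1)Y_i+Z_i$ of independent solutions. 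So the admissible values of $2k+1$ are closed under multiplication and include every $5^a9^b$.
\item Use the multiplicative density of $\{5^a9^b\}$ (since $\log 9/\log 5$ is irrational), together with $|\rho_k-\rho_{(1+\eta)k}|\le\eta$ for the LP value $\rho_k$, to reach every $k$.
\end{enumerate}
Your fallback is also wrong as stated. $K_5$ minus an edge contains $K_{2,3}$, a triangle-free subgraph using $6$ of its $9$ edges, so its ratio is $2/3>3/5$ and mixing it in would weaken the bound. By contrast, $K_6$, with ratio $9/15$, would be admissible.
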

We remark that the same lower bound as in \cref{prop:turan-K3} was obtained by Antoniuk and Reiher in a more general analytic setting \cite{AR24}.

\subsection{Potential generalizations, and some open problems}\label{subsec:poss-generalize}

We believe that locally dense, globally sparse graphs form a fruitful setting in which to consider extremal problems. 
We present a few problems along these lines, some of which we have previously hinted at. Two such problems arise naturally from our discussion, namely the full strength of \cref{conj:mainconj} and the determination of $\pi_{\mathrm{cyc}}(K_t)$ for each $t\geq 3$ (extending our \cref{prop:turan-K3}).

\subsubsection*{Resiliency after sub-sampling}

Although our \cref{thm:main} and the prior work of Lee and Sudakov \cite{lee2012dirac} are orthogonal generalizations of Dirac's theorem, they may admit a common generalization as follows.
\begin{conjecture}
     Let $\eps>0$. There exists $k$ large enough so that for any $n\geq k$, independently sampling each edge of $C_n^k$ with probability $\gg \log n/k$ yields, with high probability, a graph all of whose $(1/2+\eps)$-subgraphs are Hamiltonian.
\end{conjecture}

\subsubsection*{More spanning structures in $C_n^k$}

Beyond Hamilton cycles and perfect matchings, there is a large body of work focused on finding other spanning structures in graphs under various conditions.
Well-known examples of spanning structures include clique factors (disjoint cliques) and powers of Hamilton cycles.
Two classic results in this direction state that an $n$-vertex graph of minimum degree $(1-1/t) n$ contains a $K_t$-factor if $t$ divides $n$ (Hajnal and Szemer{\'e}di~\cite{HS70}) and the $(t+1)$st power of a Hamilton cycle for $n$ large enough (Koml{\'o}s, S{\'a}rk{\"o}zy, and Szemer{\'e}di~\cite{komlos1998proof}).
Both of these results generalize (consequences of) Dirac's theorem and it is therefore natural to ask whether this extends to the cyclical setting.

\begin{question}\label{qn:Kt-factor}
    Let $t\geq3$.
    For which $\alpha>0$ does it hold that, for sufficiently large $k$ and for $n\geq k$ a multiple of $t$, every $\alpha$-subgraph of $C_n^k$ contains a $K_t$-factor?
\end{question}

By modifying the construction for the lower bound of \cref{thm:dirac-clique} (see \cref{cor:dirac-factor-construction}), one can see that $\alpha$ must be at least $1-\frac1{2(t-1)}-o(1)$, which differs markedly from the bound $1-1/t$ in the complete setting when $t\geq 3$.
Of course, one can pose the same question also for powers of cycles (as guest graphs):

\begin{question}\label{qn:power-Ham}
    Let $t\geq2$.
    For which $\alpha>0$ does it hold that, for sufficiently large $k$, every $\alpha$-subgraph of $C_n^k$ contains the $t^{\mathrm{th}}$ power of a Hamilton cycle?
\end{question}

\section*{Acknowledgements}

We thank Tara Abrishami for valuable discussions in the early stages of the project. We are grateful to Alberto Espuny D\'{\i}az and Lyuben Lichev for bringing reference~\cite{AR24} for the lower bounds of \cref{prop:turan-K3} to our attention and for sharing the discussion of a related upper bound obtained in joint work with Marcos Kiwi and Dieter Mitsche.

This material is based upon work supported by the National Science Foundation under Grant No.~DMS-1928930, while the authors were in residence at the Simons Laufer Mathematical Sciences Institute in Berkeley, California, during the semester of Spring 2025. 

Richard Lang was supported by the Ramón y Cajal programme (RYC2022-038372-I) and by grant PID2023-147202NB-I00 funded by MICIU/AEI/10.13039/501100011033.

Carl Schildkraut is supported by a National Science Foundation Graduate Research Fellowship Program under Grant No.~DGE-2146755.

\section{Proof overview}

In this section, we outline the proof of \cref{thm:main}.
\crefrange{subsec:notation}{subsec:proof-of-thm} are devoted to this outline; we comment in \cref{subsec:generalization} on which steps in our outline generalize readily to other locally dense, globally sparse settings and which steps require new ideas to extend.

\subsection{Notation}\label{subsec:notation}
By \defn{$C_n^k$} we denote the $k$th power of the cycle~$C_n$. A spanning subgraph $G$ of~$C_n^k$ is \defn{$\alpha$-Dirac} if $\delta(G) \geq (1 + \alpha)k$, which is equivalent to $G$ being a $(1+\alpha)/2$-subgraph of~$C_n^k$. 
We will avoid the latter term in the remainder of the paper.
Throughout, we will always assume that $G$ is $\alpha$-Dirac unless otherwise stated.

For $x,y,z \in \RR$, we write $z \in x \pm y$ to indicate that $z$ is between $x-y$ and $x+y$.
We denote the disjoint union of sets $X$ and $Y$ by $X \sqcup Y$.
For vertices $u, v$ of $G \subset C_n^k$, we denote by~\defn{$\dist_C(u, v)$} the distance from $u$ to $v$ in $C_n$ and by~\defn{$\dist_G(u, v)$} the distance from $u$ to $v$ in~$G$. 

\begin{definition}\label{def:set-properties}
    Let $G$ be any (not necessarily $\alpha$-Dirac) subgraph of $C_n^k$.
    Given a subset $X\subseteq V(G)$, an interval $I$ of $C_n$, some $0<\eps<1$, and some positive real $r$, we say that 
    \begin{itemize}
        \item $X$ is \defn{$(\eps,r)$-sparse in $I$} if $\abs{I\cap X}\leq\eps\abs{I}+r\cdot k^{4/5}$;
        \item $X$ is \defn{$(\eps,r)$-dense in $I$} if $\abs{I\cap X}\geq\eps\abs{I}-r\cdot k^{4/5}$.
    \end{itemize}
    
    We say that $X$ is
    \begin{itemize}
        \item \defn{$(\eps,r)$-locally sparse} if $X$ is $(\eps,r)$-sparse in every interval of length at most $2k+1$;
        \item \defn{$(\eps,r)$-locally dense} if $X$ is $(\eps,r)$-dense in every interval of length at most $2k+1$;
        \item \defn{$(\eps,r)$-distributed} if $X$ is $(\eps,r)$-locally sparse, $(\eps,r)$-locally dense, and moreover
        \[ \abs{X\cap N_G(x)\cap N_G(y)} \in \eps\abs{N_G(x)\cap N_G(y)} \pm r\cdot k^{4/5}\]
        for every two (not necessarily distinct) vertices $x,y\in V(G)$.
    \end{itemize}
We will frequently omit $r$ from the above notions; here we understand $r=1$.
We will occasionally apply the notion of $\eps$-locally sparse to multisets of vertices. When we do so, expressions like $\abs{I\cap X}$ count elements of $X$ with multiplicity.
\end{definition}

\subsection{Proof sketch}\label{subsec:sketch}

Throughout, we will allow ourselves to assume that $k$ is larger (resp.\ $\alpha$ is smaller) than whichever absolute constant we require.
\par Our aim is to find a Hamilton cycle in $G$, and we will follow an absorption strategy. The properties of the absorber we require are enumerated in the following definition.

\begin{definition}
    Let $G$ be a graph and let $S\subset V(G)$. 
    Let $x,y\in V(G)\setminus S$ be two distinct vertices.
    A set $A\subset V(G)\setminus S$ with $\{x,y\} \subset A$ is an \defn{$(x,y;S)$-absorber} if, for every subset $S'\subset S$, the graph $G[S'\sqcup A]$ has a spanning path with endpoints $x$ and $y$.
\end{definition}

We now give a sketch of the proof of \cref{thm:main}, stating the main ingredients which will be proven in later sections. In this sketch, we omit all mention of how the density parameters in different lemmas relate to each other.

We begin by isolating two distributed sets $S$ and $R$, which is possible using the following (more general) lemma.

\begin{lemma}[Partitioning a distributed set]\label{lem:distributed-partition}
    Let $G$ be any subgraph of $C_n^k$.
    Let $S$ be an $(\eps,r)$-distributed set for $G$, and let $\eps=\eps_1+\eps_2$. 
    Suppose that $k$ is larger than some absolute constant and that $r\min(\eps_1,\eps_2)\geq k^{-1/5}\eps$.
    Then $S$ can be partitioned into an $(\eps_1,r)$-distributed set and an $(\eps_2,r)$-distributed set.
\end{lemma}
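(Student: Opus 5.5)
The plan is a random partition followed by concentration and a union bound. Put each vertex of $S$ independently into $S_1$ with probability $p=\eps_1/\eps$, and into $S_2$ otherwise. We then check that, with positive probability, $S_1$ is $(\eps_1,r)$-distributed and $S_2$ is $(\eps_2,r)$-distributed. By symmetry it suffices to treat $S_1$ and then union bound over both parts.

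\textbf{The quantities to control.} Only polynomially many quantities in $n$ need controlling:
\begin{itemize}
\item $\abs{I\cap S_1}$ for every interval $I$ of length at most $2k+1$, of which there are at most $n(2k+1)$;
\item $\abs{S_1\cap N_G(x)\cap N_G(y)}$ for every pair $x,y$.
\end{itemize}
In the second case $N_G(x)\cap N_G(y)$ is empty unless $\dist_C(x,y)\le 2k$, so only $O(nk)$ pairs matter. Each quantity has the form $\abs{T\cap S_1}$ for a set $T$ of size at most $2k+1$. Its mean $p\abs{T\cap S}$ inherits from the hypothesis the bounds
\[
p\abs{T\cap S}\in \eps_1\abs{T}\pm p\,r k^{4/5},
\]
using the upper and lower bounds from local sparsity/density, or from the common-neighbourhood condition. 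Since $p\le 1$, the deterministic error is at most $p\,rk^{4/5}$. We therefore need the random fluctuation to be at most $(1-p)rk^{4/5}=(\eps_2/\eps)rk^{4/5}$.

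\textbf{Concentration.} Chernoff's bound applies to a sum of at most $2k+1$ independent indicators. The hypothesis $r\min(\eps_1,\eps_2)\ge k^{-1/5}\eps$ gives $(\eps_2/\eps)rk^{4/5}\ge k^{3/5}$. A deviation of $k^{3/5}$ from a sum of at most $2k+1$ indicators has probability at most $2\exp(-\Omega(k^{6/5}/k))=2\exp(-\Omega(k^{1/5}))$. The same hypothesis handles $S_2$ with the roles of $\eps_1$ and $\eps_2$ swapped.

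\textbf{Main obstacle: the union bound.} There are $\Theta(nk)$ events, each failing with probability $e^{-\Omega(k^{1/5})}$, but $n$ is unbounded relative to $k$. So a plain union bound fails, and I would use the Lovász Local Lemma instead. Each event $E_T$ depends only on the coins of vertices in $T\cap S$, which lie in a window of length $O(k)$ around $T$. Hence $E_T$ is mutually independent of all events whose sets are at cycle distance more than $O(k)$ from $T$. That leaves $O(k^2)$ dependent events, so the dependency degree is $D=O(k^2)$. The symmetric LLL needs $e\cdot 2e^{-ck^{1/5}}(D+1)<1$, which holds for $k$ larger than an absolute constant. This gives a partition in which all events hold simultaneously.

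One subtlety to note: the tail bound is $\exp(-\Omega(k^{1/5}))$ uniformly, independent of the $\eps_i$, exactly because the hypothesis was chosen to give deviation room $k^{3/5}$. Also, for $x=y$ the condition reduces to $N_G(x)$, which is covered by the same argument.
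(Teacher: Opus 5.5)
Your proposal is correct and follows essentially the same route as the paper. You split $S$ randomly with probability $\eps_1/\eps$, use $r\min(\eps_1,\eps_2)\ge k^{-1/5}\eps$ to get deviation room at least $k^{3/5}$ in each set of size at most $2k+1$ (intervals and common neighbourhoods), obtain failure probability $e^{-\Omega(k^{1/5})}$ by a Chernoff-type bound, and finish with the Lov\'asz local lemma because the dependency degree is polynomial in $k$. Your additive (Hoeffding-type) tail bound and your $O(k^2)$ dependency count are slightly cleaner than the paper's multiplicative Chernoff estimate and cruder $21k^3$ bound, but these differences are cosmetic.
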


Next, we find an $S$-absorber $A$ in $R$ via the following lemma.

\begin{lemma}[Absorber construction]\label{lem:absorber-constr}
    Let $G$ be an $\alpha$-Dirac subgraph of $C_n^k$.
    Let $R\subset V(C_n)$ be $\eps$-distributed, and let $S\subset V(C_n)\setminus R$ be $\eps'$-distributed.
    Suppose that $\alpha^{-1}k^{-1/5}\leq\eps'\leq10^{-22}\alpha^8\eps$.    
    Then there exist $x,y\in R$ for which $R$ contains an $(x,y;S)$-absorber $A$ in $G$.
\end{lemma}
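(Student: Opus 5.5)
The plan is to build the absorber from many small local gadgets. Each gadget absorbs one vertex of $S$. The gadgets are then chained together using $R$ as connecting material, with each vertex of $S$ matched to a bounded number of gadgets through a bipartite "template" structure.

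\textbf{Step 1: the single-vertex gadget.} For $v\in S$, a \emph{$v$-gadget} is a short path $P=a_1a_2\cdots a_m$ in $G[R]$ such that both $P$ and some path on $V(P)\cup\{v\}$ have endpoints $a_1$ and $a_m$. The simplest instance is an edge $a_1a_2$ with $a_1,a_2\in N_G(v)\cap R$. Such edges exist because $R$ is $\eps$-distributed: $\abs{N_G(v)\cap R}\ge \eps d_G(v)-k^{4/5}\ge \eps(1+\alpha)k-k^{4/5}$. Since $N_G(v)$ lies in an interval of length $2k+1$ and every vertex has degree at least $(1+\alpha)k$, $G[N_G(v)]$ is dense. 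A vertex $u\in N_G(v)$ has at least $\alpha k$ neighbours inside $N_G(v)$, and the codegree condition from $\eps$-distributedness lets us intersect this with $R$. This yields $\Omega(\alpha\eps k^2)$ such edges near $v$. Wherever I need common-neighbourhood counts inside $R$, I will use the codegree clause $\abs{R\cap N(x)\cap N(y)}\approx\eps\abs{N(x)\cap N(y)}$. Together with the Dirac condition this gives $\abs{N(x)\cap N(y)}\ge 2\alpha k$ roughly for $\dist_C(x,y)\le k$. The explicit power $\alpha^8$ in the hypothesis suggests iterating such codegree bounds a few times.

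\textbf{Step 2: template and greedy selection.} I will not handle each $v\in S$ with its own private gadget, since we must be able to absorb any subset $S'$. Instead I use a Montgomery-style robust bipartite template adapted to the one-dimensional geometry. Partition the cycle into consecutive intervals of length about $k$. Because $S$ is $\eps'$-locally sparse, each interval contains about $\eps'k$ vertices of $S$. For each such $v$, choose about $K$ disjoint edge-gadgets in $R$ near $v$, with $K$ a constant, forming a bounded-degree bipartite graph between $S$-vertices and gadgets. The construction must follow a robustly matchable template, so that for every $S'\subseteq S$ the vertices of $S'$ can be matched injectively to gadgets. Gadgets may be shared between nearby vertices of $S$. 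The template is built inside each local window independently, and the windows are then glued.

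The gadgets are chosen greedily. The total number used near any vertex is $O(\eps' k)$, while the pool in $R$ has size about $\eps\alpha k$. The condition $\eps'\le 10^{-22}\alpha^8\eps$ ensures that the greedy choices never deplete the locally available structure. The lower bound $\eps'\ge\alpha^{-1}k^{-1/5}$ ensures that the $k^{4/5}$ error terms are negligible.

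\textbf{Step 3: connecting.} I then link all gadgets, in cyclic order around $C_n$, into one path from $x$ to $y$ using short connecting paths through unused vertices of $R$. Two consecutive gadgets lie within distance $O(k)$. Between any two vertices $u,w\in R$ at cycle-distance at most about $k$, I need a path of bounded length inside $R$ avoiding a forbidden set that is locally sparse of density $O(\eps'+\text{used})$. This follows from the codegree and density properties of $R$: the neighbourhoods in $R$ have size about $\eps k$, and a two- or three-step expansion argument shows that such neighbourhoods must meet or be joined by an edge.

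The main obstacle is exactly this connection step when $\dist_C(u,w)$ is close to $k$, or when the route must travel along the cycle. In that regime $N(u)\cap N(w)$ may be small, and $G$ can be locally "bipartite-like". I would first try walking in steps of length about $k/2$, so that every consecutive pair has common neighbourhood at least $\alpha k$. I then pick intermediate vertices in $R\cap N(x)\cap N(y)$, losing a factor of $\eps$ and a factor of $\alpha$ at each step. That loss is presumably the source of the polynomial dependence $\alpha^8$.

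Finally, take $A$ to be the union of all gadgets, connectors and the endpoints $x,y$. Given $S'$, match $S'$ into gadgets via the template and insert each matched vertex into its gadget. Every unmatched gadget is traversed as its plain path, which gives a spanning $x$–$y$ path of $G[S'\sqcup A]$.
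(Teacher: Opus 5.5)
\textbf{The gap: the codegree estimate.} Your construction rests on the claim that $\abs{N(x)\cap N(y)}\gtrsim 2\alpha k$ whenever $\dist_C(x,y)\le k$, and hence that every $u\in N_G(v)$ has about $\alpha k$ neighbours inside $N_G(v)$. This is false. \cref{lem:nbhd-inter} only gives $2\alpha k-d-1$ common neighbours for $d=\dist_C(x,y)$, which is vacuous once $d\ge 2\alpha k$, and the bound genuinely fails.

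Here is an example. Take $v=0$ and $h=\lceil(1+\alpha)k/2\rceil$, and let $N_G(0)=[-k,-k+h-1]\cup[k-h+1,k]$. Delete from $C_n^k$ the remaining edges at $0$ and all edges inside $N_G(0)$. A vertex of $N_G(0)$ loses at most about $(1+3\alpha)k/2$ edges, and every other vertex loses at most one. So for $\alpha<1/5$ and large $k$ the graph $G$ is $\alpha$-Dirac. Yet $0$ lies in no triangle, and $0$ has no common neighbour with any of its neighbours, which sit at distances between roughly $(1-\alpha)k/2$ and $k$.

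Nothing prevents an $\eps'$-distributed $S$ from containing such a vertex. So your Step 1 edge gadgets need not exist. Your general notion of a $v$-gadget is fine, but you only argue existence in the triangle case. The same example also refutes the claim in Step 3 that vertices at distance about $k/2$ share $\alpha k$ common neighbours: hops must have length $O(\alpha k)$.

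\textbf{The missing idea: parity.} To absorb $v$ you need an odd cycle through $v$, which need not be a triangle, and it must be built robustly. The paper does this with beehives (\cref{def:beehive}).
\begin{enumerate}
\item Match each $s\in S$ injectively to a neighbour $x^{(s)}\in R_1$ using Hall's theorem.
\item Join $s$ to $x^{(s)}$ by a path of \emph{even} length with internal vertices in $R_2$, using \cref{cor:path-connect}. That lemma hops in steps of length at most $\alpha k$ and bridges each hop by a common neighbour, so every hop contributes exactly two edges. Together with the edge $sx^{(s)}$ this closes an odd cycle.
\item Split the path at its middle vertex $y^{(s)}$ into $p_1,\dots,p_\ell$ and $q_1,\dots,q_\ell$, and join each $p_i$ to $q_i$ through $R_3$. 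This gives an $(x^{(s)},y^{(s)};\{s\})$-absorber (\cref{lem:beehive-useful}).
\end{enumerate}
These absorbers are private to each $s$ and are chained in clockwise order with \cref{cor:path-connect} again.

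Contrary to your Step 2, no template is needed. Chained private single-vertex absorbers already handle every $S'\subseteq S$, since each one independently includes or skips its own vertex.
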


We remark that in \cref{lem:absorber-constr}, we only need $S$ to be $\eps'$-locally sparse, but the stronger condition simplifies the proof.

We now release the vertices of $R\setminus A$ back to the rest of the graph, which we now call $G'$. As $S\sqcup A$ is fairly sparse, the removal of these vertices does not change the structure of our graph too much.

\begin{lemma}[Removing a locally sparse subset]\label{lem:loc-sparse-removal}
    Let $G=(V,E)$ be an $\alpha$-Dirac subgraph of~$C_n^k$ and let $J \subseteq V$ be an $\eps$-locally sparse set.
    Set $n' \coloneqq \abs{V} - \abs{J}$ and $\alpha' \coloneqq \alpha - 3\eps - k^{-1/5}$. 
    Then, the induced subgraph $G':=G[V\setminus J]$ is an $\alpha'$-Dirac subgraph of $C_{n'}^{k}$.
    Moreover, if~$S\subset V(G')$ is an $(\eps',r)$-locally sparse set in $G'$ for some $(\eps',r)$, then $S$ is $(\eps',r)$-locally sparse in $G$.
\end{lemma}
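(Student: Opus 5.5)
The natural identification: removing $J$ from the cycle $C_n$ leaves the vertices of $V\setminus J$ in their cyclic order, and this ordering defines a cycle $C_{n'}$. We must check that $G'$ is a subgraph of $C_{n'}^k$ with respect to this ordering, and that its minimum degree is at least $(1+\alpha')k$. The first claim is immediate. If $uv\in E(G')$, then $u$ and $v$ lie at $C_n$-distance at most $k$. Deleting vertices can only shrink the number of vertices strictly between them along the shorter arc, so their $C_{n'}$-distance is also at most $k$. Hence $G'\subseteq C_{n'}^k$. (If $n'\le 2k+1$ then $C_{n'}^k$ is complete and there is nothing to check.)

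The degree bound comes from local sparseness. For $v\in V\setminus J$, the neighbourhood $N_G(v)$ lies in the interval $I_v$ of length $2k+1$ centred at $v$. Since $J$ is $\eps$-sparse in $I_v$, we get $\abs{J\cap I_v}\le \eps(2k+1)+k^{4/5}$. Therefore
\[
d_{G'}(v)\ge (1+\alpha)k-\eps(2k+1)-k^{4/5}\ge (1+\alpha-3\eps-k^{-1/5})k,
\]
using $\eps(2k+1)\le 3\eps k$ for $k\ge1$. This gives the $\alpha'$-Dirac property. The slack of $\eps k$ in the constant $3\eps$ is not even needed.

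For the "moreover" part, take an interval $I'$ of $C_{n'}$ of length at most $2k+1$. We want $\abs{I'\cap S}\le\eps'\abs{I'}+rk^{4/5}$, where "$S$ is sparse in $G'$" means this holds for intervals of $C_{n'}$, and we must show the same for every interval $I$ of $C_n$ of length at most $2k+1$. So take such an $I$. Then $I\setminus J$ is an interval $I'$ of $C_{n'}$ with $\abs{I'}\le\abs{I}\le 2k+1$. Since $S\cap J=\emptyset$, we have $I\cap S=I'\cap S$. Hence
\[
\abs{I\cap S}=\abs{I'\cap S}\le\eps'\abs{I'}+rk^{4/5}\le \eps'\abs{I}+rk^{4/5}.
\]

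The only mild care points are two. The first is wraparound when $n'$ is small: an interval of $C_n$ could map onto all of $C_{n'}$, which is still fine since $\abs{I'}\le n'$ and it is a legitimate "interval" of length at most $2k+1$. The second is making sure the notion of interval in $C_{n'}$ matches the induced cyclic order. I expect no real obstacle beyond this bookkeeping.
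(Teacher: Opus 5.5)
Your proof is correct and follows essentially the same route as the paper. You bound $\abs{N_G(v)\cap J}$ by $\eps(2k+1)+k^{4/5}$ using the sparseness of $J$ in the length-$(2k+1)$ interval around $v$, and for the second part you pass from an interval $I$ of $C_n$ to the interval $I\setminus J$ of $C_{n'}$ and use $S\cap J=\emptyset$. You also justify $G'\subseteq C_{n'}^k$, which the paper calls clear, and you treat intervals of length at most $2k+1$ rather than exactly $2k+1$; this is slightly more careful but the same argument.
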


The next lemma asserts that we can vertex-decompose $G'$ into a number of paths whose endpoints are locally sparse. This is perhaps the most critical component in our proof.  
Recall that a \defn{linear forest} is a collection of pairwise disjoint paths.

\begin{lemma}[Path decomposition]\label{lem:path-decomp}
    Suppose that $k$ is sufficiently large.
    Let $G$ be an $\alpha$-Dirac subgraph of $C_n^k$ for ${\alpha>k^{-1/10}}$. 
    Then $G$ has a spanning linear forest whose multiset $U$ of endpoints\footnote{By ``multiset of endpoints,'' we mean that if a component of the spanning linear forest is a single vertex, then that vertex is counted twice.} is $(10^4k^{-1/12},1/2)$-locally sparse.
\end{lemma}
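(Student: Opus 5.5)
The plan is to cut the cycle into consecutive blocks, cover each block with few paths by a local Dirac-type argument, and then iteratively merge paths whose endpoints cluster. Fix a block length $m = k^{1-\delta}$ for a small $\delta$, say around $1/12$, and partition $C_n$ into consecutive intervals $I_1,\dots,I_t$ of length about $m$ (the last one possibly longer). Every vertex $v\in I_j$ has at least $(1+\alpha)k$ neighbours in the window of $2k$ cyclic positions around it. Since $m\ll k$, however, a single block is far too small to inherit a Dirac condition. So I would not decompose block by block directly. Instead I would work with a maximum-type linear forest and show that its endpoint multiset cannot be locally dense.

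Concretely, take a spanning linear forest $F$ of $G$ that is extremal: it minimises the number of paths, and subject to that some secondary potential that penalises endpoint clustering, for instance $\sum_I \binom{|U\cap I|}{2}$ over a fixed family of short intervals $I$. Suppose some interval $I$ of length at most $2k+1$ contains more than $10^4k^{-1/12}|I| + k^{4/5}/2$ endpoints. The goal is to produce a local modification of $F$ that reduces the potential.

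The tool is a P\'osa-type rotation argument localised to the window $W$ of length about $4k$ around $I$. Let $x$ be an endpoint in $I$ with path $P$. Any neighbour of $x$ that is itself an endpoint of a different path yields a merge, contradicting minimality. So $N(x)$ avoids all other endpoints. Now count. The $\ge (1+\alpha)k$ neighbours of $x$ lie in a window of $2k$ consecutive positions. For each neighbour $w$ on some path, the path successor or predecessor $w^\pm$ can be rotated to become a new endpoint, giving a new forest with the same number of paths. Doing this simultaneously for several endpoints $x_1,\dots,x_s\in I$ with $s \approx k^{1-1/12}$ gives many rotated endpoint sets. The Dirac surplus $\alpha k$, combined with the fact that all these neighbourhoods live in a common window of size about $2k+|I|$, forces two of the rotated endpoint sets to become adjacent, by pigeonhole on $(1+\alpha)k\cdot s$ incidences into $O(k)$ positions. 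That adjacency produces a merge and contradicts minimality.

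The secondary potential then handles intervals where endpoints are only moderately dense. There, rotations move an endpoint from a dense interval to a sparser one, and because the endpoint count per interval only needs bounding up to the $k^{4/5}$ error term, a standard discharging or averaging over shifted interval families should close the argument.

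The main obstacle is the rotation step. The paths of $F$ may be very long and wander far outside $W$, and after a rotation the new endpoint $w^\pm$ may land outside $I$. I would first try to restrict to rotations where both $w$ and $w^\pm$ stay within distance $k$ of $I$, using the fact that consecutive path vertices are at cycle distance at most $k$. The counting must then show that the $\alpha k$ surplus per endpoint, together with $s\gg k^{1-1/12}$ dense endpoints, still forces a collision. This is exactly where the exponent $1/12$ and the hypothesis $\alpha>k^{-1/10}$ should enter, since one needs $\alpha s \gg k^{4/5}$-type error terms.
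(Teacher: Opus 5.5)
The step meant to carry the proof, namely that counting $(1+\alpha)k\cdot s$ incidences into $O(k)$ positions ``forces two of the rotated endpoint sets to become adjacent'', does not follow. That count only shows that many clustered endpoints share neighbours. It produces no edge between a vertex made an endpoint by a rotation at $x_i$ and one made an endpoint by a rotation at $x_j$. Even given such an edge, the two rotations cannot in general be performed in the same forest: a rotation at $x_i$ cuts a path and re-pairs endpoints, which changes the successor/predecessor data the rotation at $x_j$ relies on. For a single endpoint, what a P\'osa argument actually gives is that the set $R$ of rotation-reachable endpoints satisfies $|N(R)|\le 3|R|$. In $C_n^k$ this is no contradiction once $|R|=\Theta(k)$, because $C_n^k$ does not expand beyond scale $k$ (an interval $R$ of length at least $2k$ has $|N(R)|\le |R|+2k\le 2|R|$). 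So you learn that $R$ is large, not that the original endpoints are sparse, and the ``secondary potential'' step is not specified at all.

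More fundamentally, nothing in your count distinguishes $\delta(G)$ slightly above $k$ from slightly below, yet the conclusion fails below the threshold. Fix a small constant $\alpha>0$, let $B\subset V(C_n)$ be evenly spread of density $(1-\alpha)/2$, let $A$ be its complement, and let $G$ consist of all edges of $C_n^k$ between $A$ and $B$. Then $\delta(G)\approx(1-\alpha)k$. Since $G$ is bipartite, every spanning linear forest has at least $|A|-|B|=\alpha n$ paths, so some interval of length $2k+1$ contains at least $2\alpha(2k+1)$ endpoints, far more than the lemma allows. Moreover, a rotation from an endpoint in $A$ only produces endpoints in the independent set $A$, so no sequence of your moves can merge two paths whose endpoints lie in $A$. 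A correct proof therefore needs an ingredient that genuinely sees the threshold $\delta>k$.

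The paper never reasons about extremal forests; note that by \cref{thm:main} a forest with fewest paths is a Hamilton path, so properties of such forests are essentially as hard as the theorem itself. The threshold enters through \cref{thm:hall}: every spanning subgraph of $C_n^k$ with $\delta\ge k+1$ satisfies Hall's condition. Via the bipartite double cover, this lets one peel off about $\alpha k/2$ cycle factors, giving an $\ell$-regular spanning multigraph $G_0$ on edges of $G$ with $\ell\approx\alpha k$. The forest is then built explicitly:
\begin{itemize}
\item \cref{lem:distributed-partition} splits $V$ into $c\approx k^{1/12}/100$ classes $V_1,\dots,V_c$ that are distributed with respect to $G_0$, so the bipartite graph of $G_0$-edges between $V_i$ and $V_{i+1}$ is nearly $(\ell/c)$-regular.
\item The nibble lemma \cref{lem:nice-matching} yields in each such graph a matching leaving uncovered only a small fraction of every interval of length about $\ell$.
\item The union of these matchings is a linear forest, since the layers are linearly ordered and each vertex has at most one partner on each side. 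Its endpoints are $V_1\cup V_c$ together with the unmatched vertices, of local density about $2/c=O(k^{-1/12})$.
\end{itemize}
So the density $k^{-1/12}$ is essentially $1/c$, the reciprocal of the number of layers, and not the output of a rotation count.
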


Next, we must decide how to connect up these endpoints of the paths found in the previous lemma. If the underlying host graph were a complete graph, this would not be a concern: any pairing of the endpoints that yields a Hamilton cycle when merged with the given paths would do the job. However, it is costly for us to pair vertices which are far away from each other on the cycle, as many vertices of $C_n^k$ must be used to connect two such vertices by a path. So, we must choose our pairing with some care. The following lemma gives us a pairing that is sufficient for our purposes.

\begin{lemma}[Path ordering]\label{lem:path-order}
    Let $\sigma$ be an involution on some set $S\subset V(C_n)$, and let $S'$ be the multiset in which every fixed point of $\sigma$ appears twice and every other element of $S$ appears once.
    
    Then there exists an ordering $v_1,w_1,\ldots,v_s,w_s$ of the elements of $S'$ and some selection $t\in\{\mathrm{clockwise},\mathrm{counterclockwise}\}^s$ such that $\sigma(v_i)=w_i$ for each $1\leq i\leq s$ and the following holds.
    For each $1\leq i\leq s$, consider the interval $I_i:=[w_i,v_{i+1}]$ (with $v_{s+1}=v_1$) in $C_n$ oriented clockwise or counterclockwise depending on $t_i$.
    Then each $x\in C_n$ lies in at most twelve $I_i$.
\end{lemma}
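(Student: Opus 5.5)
The approach is to build the ordering from two perfect matchings on the multiset $S'$ and then splice cycles together locally. List the elements of $S'$ in clockwise order around $C_n$ as $p_1,\dots,p_{2m}$, breaking ties arbitrarily; a fixed point of $\sigma$ contributes two consecutive copies. Let $M$ be the matching $\{p_{2j-1},p_{2j}\}$ for $1\le j\le m$, and let $\Sigma$ be the matching given by $\sigma$, where a fixed point is matched to its own second copy. Set $e_j=\{p_{2j-1},p_{2j}\}$ and give it the clockwise interval $[p_{2j-1},p_{2j}]$. These $m$ intervals are internally disjoint, so each point of $C_n$ lies in at most two of them. The union $M\cup\Sigma$ is a disjoint union of alternating cycles. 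If it is a single cycle, traversing it gives the ordering: the $\Sigma$-edges are the pairs $(v_i,w_i)$, and the $M$-edges are the connections $(w_i,v_{i+1})$, each taking the orientation of its short arc.

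In general there are several alternating cycles, and I would merge them by $2$-opt swaps between $M$-edges that are adjacent on the circle. Take two $M$-type edges $\{a,b\}$ and $\{c,d\}$ lying in different alternating cycles. Replace them by either $\{a,c\},\{b,d\}$ or $\{a,d\},\{b,c\}$. Because the two edges lie in different cycles, either choice merges the two cycles into one alternating cycle. I would process $j=1,\dots,m-1$ in order. Keep an invariant: after step $j$, there is a designated current connection edge $f_j$ with one endpoint in $\{p_1,\dots,p_{2j}\}$ near position $2j$, and its cycle contains all of $e_1,\dots,e_j$ (or the edges that replaced them). At step $j+1$, if $e_{j+1}$ lies in a different cycle from $f_j$, swap $f_j$ with $e_{j+1}$, and choose the new designated edge $f_{j+1}$ to be the one with an endpoint in $e_{j+1}$. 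Each new connection edge is oriented along the arc that stays inside the local window spanned by its endpoints, rather than wrapping around the circle.

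The key step, and the main obstacle, is the overlap bound. A naive sequential merge can let a single edge get stretched repeatedly, so that its interval accumulates over many blocks. I would prevent this by always swapping with the freshest edge. Then I would show that every connection interval present in the final configuration has both endpoints within a bounded number of consecutive blocks. In particular, each final interval is contained in $[p_{2j-1},p_{2j+4}]$ for some $j$, and only a bounded number of final intervals start in any given block. Counting how many final intervals can contain a given point of $C_n$ then gives a constant. I expect bookkeeping of this kind, combining the original $M$-intervals, the new edges created at each swap, and the designated edge straddling blocks, to give at most $12$. If the invariant turns out not to be locally bounded, the fallback is a two-round scheme. In the first round, merge along disjoint pairs of adjacent blocks. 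In the second round, treat each merged pair of blocks as a single block and merge adjacent merged blocks, checking that only a constant number of rounds are needed for the coverage count.

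Finally, the merged single cycle gives the required ordering $v_1,w_1,\dots,v_s,w_s$ with $\sigma(v_i)=w_i$, and the orientations $t_i$ are those of the chosen arcs. The degenerate case where the elements of $S'$ wrap around $C_n$ at $p_{2m}\to p_1$ is handled identically by working on the circle with indices taken cyclically.
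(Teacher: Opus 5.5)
Your route is genuinely different from the paper's and can be completed. As written, though, the step you call the main obstacle is only asserted, and the rule you state does not determine it.

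When you swap $f_j=\{x_j,y_j\}$ with $e_{j+1}=\{p_{2j+1},p_{2j+2}\}$, \emph{both} new edges have an endpoint in $e_{j+1}$. So ``the one with an endpoint in $e_{j+1}$'' does not specify $f_{j+1}$. You also do not say which of the two reconnections to use, or what $f_{j+1}$ is when no swap occurs. Your invariant controls only one endpoint of $f_j$, but the non-designated new edge becomes final and inherits the \emph{other} endpoint. Hence the containment of final intervals in windows $[p_{2j-1},p_{2j+4}]$ does not follow. If you keep designating the edge through the older endpoint, that endpoint is carried along indefinitely and the resulting final edge is long.

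A rule that works is the following:
\begin{itemize}
\item When a swap occurs, pair the earlier endpoint $x_j$ with $p_{2j+1}$ and $y_j$ with $p_{2j+2}$, and set $f_{j+1}=\{y_j,p_{2j+2}\}$.
\item If $e_{j+1}$ already lies in the current cycle, set $f_{j+1}=e_{j+1}$.
\end{itemize}
By induction, $y_j=p_{2j}$ and $x_j\in\{p_{2j-2},p_{2j-1}\}$, the current cycle contains $p_1,\dots,p_{2j}$, and $e_{j+1},\dots,e_m$ are untouched. So after step $m$ you have a single alternating Hamilton cycle in which every connection edge is $\{p_a,p_b\}$ with $a<b\le a+3$.

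Orient each such edge clockwise from $p_a$ to $p_b$. All intervals then lie in the clockwise arc from $p_1$ to $p_{2m}$, so no cyclic wrap-around ever arises. Each $p_a$ is an endpoint of exactly one connection edge. Therefore a point strictly between consecutive positions $p_i,p_{i+1}$ lies only in intervals with $a\in\{i-2,i-1,i\}$. A point occupied by some $p_i$, or by the two copies $p_i,p_{i+1}$ of a fixed point, lies in at most five intervals. This gives the bound $5$. The hierarchical fallback is unnecessary; it would in any case need about $\log_2 m$ rounds, not a constant number.

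For comparison, the paper does not build the ordering locally. Among all ways of adding arcs that close the $\sigma$-pairs into one alternating closed walk, it takes one of minimum total arc length. A 2-opt exchange then shows that no three arcs are pairwise nested and no three are pairwise crossing. The Erd\H{o}s--Szekeres theorem gives at most $4$ arcs through any point, with a factor $3$ lost to fixed points. That argument is short and free of bookkeeping but nonconstructive. Yours, once the swap rule is fixed as above, is an explicit linear-time procedure with a better constant.
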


Given this order, we connect the paths, including the ``path'' $x\to y$ that we will choose within our absorber $A$, using vertices in $S$. 

\begin{lemma}[Path connection]\label{lem:path-connect}
    Let $S$ be an $\eps$-distributed set.
    Let $T$ be an $\eps'$-locally sparse multiset, disjoint from $S$, of some even size $2t$.
    Suppose that $T$ can be enumerated as $v_1,w_1,\ldots,v_t,w_t$ in such a way that the clockwise intervals $I_i:=[v_i,w_i]$ in total cover each point of $C_n$ at most $M$ times.

    Suppose that $M<\frac{\alpha\eps'}{12}k$ and $\alpha^{-1}k^{-1/5}\leq\eps'\leq10^{-3}\alpha\eps$.
    
    Then there exists, for each $1\leq i\leq t$, a path $P_i=(v_i,s_{1,i},\ldots,s_{\ell_i,i},w_i)$ of even length $\ell_i\leq 6+12\alpha^{-1}k^{-1}\dist_C(v_i,w_i)$ in $G$ whose internal vertices all lie in $S$. 
    Moreover, we can ensure that the $P_i$ are internally vertex-disjoint.
\end{lemma}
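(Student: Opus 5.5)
We build the paths greedily, one at a time in the order $i=1,\ldots,t$, and maintain the invariant that the set $U\subseteq S$ of vertices used so far stays locally sparse. The invariant follows from the multiplicity bound $M$: every used vertex lies within $O(k)$ of the interval $I_i$ it serves, and each path uses at most $6+12\alpha^{-1}k^{-1}\dist_C(v_i,w_i)$ vertices. A window $W$ of length $2k+1$ therefore meets at most $M$ of the intervals passing through it, together with the short intervals near it. Each such path leaves at most $O(1)$ used vertices per interval of length $\alpha k/12$ it traverses, so the total number of used vertices in $W$ is $O(M\alpha^{-1})+O(\eps' k)$, since $T$ is $\eps'$-locally sparse. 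With $M<\alpha\eps' k/12$ this is at most a constant times $\eps' k$, which we may assume is at most $\frac{1}{10}\alpha\eps k$ because $\eps'\leq 10^{-3}\alpha\eps$.

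The heart of the argument is a local connection step. Let $x,y$ be vertices with $\dist_C(x,y)\le \alpha k/4$, say, and let $S^*:=S\setminus U$. We show that $x$ and $y$ have a common neighbour in $S^*$, or failing that are joined by a short path through $S^*$. The $\alpha$-Dirac condition gives $|N_G(x)\cap N_G(y)|\ge |N(x)|+|N(y)|-|N_{C^k}(x)\cup N_{C^k}(y)|\ge 2(1+\alpha)k-(2k+\dist_C(x,y))\ge \alpha k$, roughly. Because $S$ is $\eps$-distributed, $|S\cap N(x)\cap N(y)|\ge \eps\alpha k-k^{4/5}$. Because $U$ is sparse, removing it costs at most $\frac1{10}\alpha\eps k$ more, so a common neighbour in $S^*$ exists. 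This gives a path of length $2$ through a vertex of $S$. For parity, and to move along the cycle, we also use the following: from $x$ we can reach, in two steps through $S^*$, a vertex $z\in S^*$ at cyclic distance about $\alpha k/12$ further along. To see this, pick $z\in S^*$ in the appropriate interval, which is possible by local density of $S$ minus sparsity of $U$, and then join $x$ and $z$ by a common neighbour as above.

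To route $P_i$ from $v_i$ to $w_i$, we walk along $I_i$ in steps of size about $\alpha k/12$ using the hop $x\to s\to z$, with $s,z\in S^*$ fresh. Once within distance $\alpha k/12$ of $w_i$, we close up with a final common neighbour. The number of internal vertices is then at most $2\lceil 12\dist_C(v_i,w_i)/(\alpha k)\rceil+O(1)$, matching the bound $6+12\alpha^{-1}k^{-1}\dist_C(v_i,w_i)$. To make $\ell_i$ even, each hop contributes two internal vertices and the final closing contributes one. If the resulting parity is wrong, we insert one extra $S^*$-vertex $z$ adjacent to a common-neighbour pair, using a triangle-free detour of length $3$ instead of $2$ at the end: choose $s,s'\in S^*$ with $s\in N(x)$, $s'\in N(s)\cap N(y)$. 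This uses the codegree control for the pair $(s,y)$ together with $\dist_C(s,y)$ being small. The vertices $v_i,w_i$ are endpoints in $T$, disjoint from $S$, so internal disjointness is automatic once we only use $S^*$.

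The main obstacle is making the sparsity bookkeeping rigorous. The issue is that the invariant must hold for every window at every stage, including windows where many intervals $I_i$ overlap (up to $M$), and where several endpoints of $T$ cluster (up to $\eps' k+k^{4/5}$). We would handle this by charging each used vertex to the interval $I_i$ whose path created it, and by observing that our path for $I_i$ places at most $2$ vertices per subinterval of length $\alpha k/12$ plus $O(1)$ near its endpoints. We would also keep all vertices of $P_i$ within distance $k$ of $I_i$. Summing over a window of length $2k+1$ gives at most $(2k+1)/(\alpha k/12)\cdot 2M+O(1)\cdot|T\cap W'|$ used vertices for a slightly enlarged window $W'$. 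This is $O(M/\alpha)+O(\eps' k)\ll \alpha\eps k$ under the stated hypotheses, with room to absorb the $k^{4/5}$ error terms since $\eps'\ge\alpha^{-1}k^{-1/5}$.
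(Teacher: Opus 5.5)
Your single greedy pass, which picks both waypoints $z$ and connectors $s$ fresh from $S\setminus U$ and controls them by an a priori bound on $|U\cap W|$, is a workable alternative. However, three details need repair.

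\emph{First, the length count does not match.} Hops of size about $\alpha k/12$, each costing two internal vertices, give about $24\dist_C(v_i,w_i)/(\alpha k)$ internal vertices. That is twice the allowance. By \cref{lem:nbhd-inter}, codegrees are still at least $1.75\alpha k-1$ at distance $\alpha k/4$. So take each new waypoint at clockwise distance between $\alpha k/6$ and $\alpha k/4$ from the current vertex, and close once you are within $\alpha k/4$ of $w_i$. This gives at most $2+12\dist_C(v_i,w_i)/(\alpha k)$ edges.

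\emph{Second, the invariant $|U\cap W|\le\frac1{10}\alpha\eps k$ is too weak to guarantee a fresh waypoint.} The target interval has length only $\Theta(\alpha k)$ (length $\alpha k/12$ with the choice above), so it contains only about $\eps\alpha k/12$ vertices of $S$. What your charging argument actually gives, after enlarging $W$ by $k$ on each side to catch the connectors, is $|U\cap W|\le C\eps'k$ for an absolute constant $C$ of order $20$. Since $\eps'\le 10^{-3}\alpha\eps$, any $C$ below about $80$ suffices. State the invariant in that form.

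\emph{Third, drop the parity detour.} Every link, including the first and the closing one, is a two-edge path $x\to s\to z$. So $P_i$ always has an even number of edges, i.e.\ an odd number of internal vertices. This is exactly what the paper's proof produces and what \cref{lem:beehive-constr} uses; the indexing of $\ell_i$ in the statement is off by one. The detour is also unjustified as written. The Dirac condition lets $N_G(x)$ miss a whole interval of length $(1-\alpha)k$, so for small $\alpha$ there may be no $s\in N_G(x)$ within distance $2\alpha k$ of $y$. In that case \cref{lem:nbhd-inter} gives no common neighbours for the pair $(s,y)$.

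With these repairs, your route differs from the paper's in organisation rather than substance. The paper proceeds in stages:
\begin{enumerate}
\item It splits $S=S_1\sqcup S_2$ with $S_1$ being $16\eps'$-distributed.
\item It fixes waypoints $J_i\subset I_i$ spaced between $\alpha k/6$ and $\alpha k/2$, and shows the multiset $\mathcal J=\bigsqcup_i J_i$ is $2\eps'$-locally sparse. This is where $M<\alpha\eps'k/12$ is used.
\item It matches $\mathcal J$ into $S_1$ all at once by Hall's theorem, with displacement at most $\alpha k/4$.
\item It joins consecutive vertices of the resulting sequences by common neighbours in $S_2$ via \cref{lem:pair-connect}. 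That lemma's greedy count of previously used connectors in $N_G(x)\cap N_G(y)$ is essentially your charging argument.
\end{enumerate}
Separating the two roles means the waypoint step needs only a degree comparison between a locally sparse and a locally dense set, and the connector step is a self-contained lemma with clean hypotheses. Your version dispenses with the splitting and with Hall's theorem. The price is that one used-set invariant must also work at the short scale $\Theta(\alpha k)$ where waypoints are picked, so the constants must be tracked more carefully.
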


Neither the requirement of even paths nor the ability to take $M$ rather large is necessary to deduce the main theorem from the lemmas presented in this section. However, we will use \cref{lem:path-connect} (via a corollary, \cref{cor:path-connect}, presented in the next section) to prove \cref{lem:absorber-constr}, and here both properties are crucial.

Finally, we absorb the vertices in $S$ not used by the previous lemma into our $x\to y$ path via our absorber $A$. The result is a Hamilton cycle in $G$.

\vspace{2mm}

Before continuing, we provide a few comments on \cref{lem:path-decomp}. Firstly, a consequence of this lemma is that $\alpha$-Dirac subgraphs $G\subseteq C_n^k$ contain nearly spanning matchings, which is already a non-obvious statement. In fact, our proof of \cref{lem:path-decomp} works by first proving a version of this latter statement, and then bootstrapping this statement to find spanning subgraphs of $G$ that are regular and also relatively dense. Such subgraphs are helpful because regular dense graphs can be vertex-partitioned into a few paths (this is the content of the well-studied Magnant--Martin conjecture, see for example \cite{montgomery2025approximate, christoph2025new}). Our proof mimics a usual strategy to prove an approximate result of this form (see~\cite{montgomery2025approximate}). We begin by partitioning, using the Lov\'asz local lemma, the vertex set into small clusters whose pairwise combinations form nearly regular bipartite graphs, and hence contain nearly spanning matchings by an extension of Hall's marriage theorem. Combining matchings across pairwise combinations of consecutive clusters along a linear order (see \cref{fig:matchings-to-forest}), we obtain a collection of paths (including, potentially, some isolated vertices) spanning the vertex set.

\begin{figure}
\begin{tikzpicture}[scale=0.6, every node/.style={circle,fill,inner sep=1.6pt}]

\def\dx{3.5}
\def\halfedgelen{1.75}

\def\ybot{-2}
\def\ytop{2}

\foreach \i in {0,...,4}
{
  \pgfmathsetmacro{\y}{\ybot + (\ytop-\ybot)*\i/4}
  \node (A\i) at (0,\y) {};
}
\draw (0,0) ellipse (0.6 and 2.6);

\foreach \i in {0,...,4}
{
  \pgfmathsetmacro{\y}{\ybot + (\ytop-\ybot)*\i/4}
  \node (B\i) at (\dx,\y) {};
}
\draw (\dx,0) ellipse (0.6 and 2.6);

\foreach \i in {0,...,3}
{
  \pgfmathsetmacro{\y}{\ybot + (\ytop-\ybot)*\i/3}
  \node (C\i) at (2*\dx,\y) {};
}
\draw (2*\dx,0) ellipse (0.6 and 2.6);

\node[draw=none,fill=none] at (2.75*\dx,0) {$\cdots$};

\foreach \i in {0,...,3}
{
  \pgfmathsetmacro{\y}{\ybot + (\ytop-\ybot)*\i/3}
  \node (D\i) at (3.5*\dx,\y) {};
}
\draw (3.5*\dx,0) ellipse (0.6 and 2.6);

\foreach \i in {0,...,4}
{
  \pgfmathsetmacro{\y}{\ybot + (\ytop-\ybot)*\i/4}
  \node (E\i) at (4.5*\dx,\y) {};
}
\draw (4.5*\dx,0) ellipse (0.6 and 2.6);

\foreach \i in {0,1,2,3}
  \draw (A\i) -- (B\i);

\draw (B1) -- (C0);
\draw (B2) -- (C1);
\draw (B3) -- (C2);
\draw (B4) -- (C3);

\draw (D0) -- (E1);
\draw (D1) -- (E2);
\draw (D2) -- (E3);
\draw (D3) -- (E4);

\foreach \i in {0,...,3}
  \draw (C\i) -- ++(\halfedgelen,0);

\foreach \i in {0,...,3}
  \draw (D\i) -- ++(-\halfedgelen,0);

\end{tikzpicture}
\caption{Stitching nearly-spanning matchings into a spanning linear forest with few paths.}
\label{fig:matchings-to-forest}
\end{figure}

The utility of this partition into a few paths lies in a later step in which we stitch the said paths into a Hamilton cycle (facilitated by the absorber construction). The fewer paths there are, the easier this latter task is. It is not sufficient, however, that the number of paths to stitch up is small in aggregate --- it is also critical that the endpoints of these paths do not cluster in any small interval of $C_n^k$. To ensure this, we find each of our nearly spanning matchings between a consecutive pair of clusters via a random process, analyzed by the semi-random method (or R\"odl nibble; see \cite{alon2016probabilistic}).

\subsection{Proof of the main result}\label{subsec:proof-of-thm}

Following the proof sketch given in the previous subsection, we prove \cref{thm:main} assuming \crefrange{lem:distributed-partition}{lem:path-connect}.

\begin{proof}[Proof of \cref{thm:main}]
    Let $\eps'':=10^4k^{-1/12}$ and $\eps':=10^3\alpha^{-1}\eps''$ and $\eps=10^{22}\alpha^{-8}\eps'$.
    We note that, since $k$ is sufficiently large and $\alpha\geq k^{-1/121}$, we have
    \[\eps=10^{29}\alpha^{-9}k^{-1/12}\leq\alpha/10.\]

    Note that $V(C_n)$ is $1$-distributed.
    We first use \cref{lem:distributed-partition} twice to partition $V(C_n)$ into three sets $X\sqcup R\sqcup S$ where $R$ is $\eps$-distributed and $S$ is $\eps'$-distributed.
    By \cref{lem:absorber-constr}, there exist some $x,y\in R$ and some $(x,y;S)$-absorber $A\ni x,y$ in $R$.
    Let $B=R\setminus A$ and $Y=X\sqcup B$.
    Set $\alpha':=\alpha-3\eps-k^{-1/5}>k^{-1/10}$ and $n'=\abs{Y}$. 
    By \cref{lem:loc-sparse-removal}, the graph $G':=G[Y]$ is an $\alpha'$-Dirac subgraph of $C_{n'}^k$. 
    Use \cref{lem:path-decomp} to find a spanning linear forest in $G'$ whose multiset $U_0$ of endpoints is $(\eps'',1/2)$-locally sparse in $G'$. 
    By \cref{lem:loc-sparse-removal}, the set $U_0$ is also $(\eps'',1/2)$-locally sparse in $G$.
    
    Adjoin $x$ and $y$ to $U_0$ to form $U$.
    Let $\sigma\colon U\to U$ be the involution mapping each endpoint of each path to the other endpoint, and additionally swapping $x$ and $y$.
    The set $U$ is $\eps''$-locally sparse.
    By \cref{lem:path-order}, there exists an ordering $v_1,w_1,\ldots,v_t,w_t$ of $U$ for which $v_i=\sigma(w_i)$ for each $i$ and for which the intervals $I_i$ connecting $w_i$ and $v_{i+1}$ oriented suitably cover each vertex of $C_n$ at most $12$ times. 
    Rotate this ordering in such a way that $v_1=x$ and $w_1=y$. 
    For each $2\leq i\leq t$, let $P_i$ be the path in $Y$ connecting $v_i$ to $w_i$.
    
    By \cref{lem:path-connect} with $M=12$, since $U$ is $\eps''$-locally sparse and $S$ is $\eps'$-distributed, we may find internally vertex-disjoint paths $Q_i$ to connect each $w_i$ to each $v_{i+1}$ all of whose internal vertices lie in $S$.
    Let $S'$ be the set of vertices in $S$ covered by these paths.

    The concatenation of the $P_i$ and the $Q_i$ forms a path
    \begin{equation}\label{eq:nearly-whole-path}
        y\xrightarrow{Q_1}v_2\xrightarrow{P_2}w_2\xrightarrow{Q_2}v_3\to\cdots\to w_{t-1}\xrightarrow{Q_{t-1}}v_t\xrightarrow{P_t}w_t\xrightarrow{Q_t}x,
    \end{equation}
    which covers exactly the vertex set $Y\sqcup S'$.
    Since $A$ is an $(x,y;S)$-absorber, there exists an~$x\to y$ path which spans exactly the vertex set $A\sqcup (S\setminus S')$. Concatenating this path with \eqref{eq:nearly-whole-path} gives a Hamilton cycle of $G$, as desired.
\end{proof}

The next five sections will be devoted to the proofs of \crefrange{lem:distributed-partition}{lem:path-connect}. 

In \cref{sec:lemmas}, we prove \cref{lem:distributed-partition,lem:loc-sparse-removal} and deduce from \cref{lem:path-connect} a corollary which is useful in the proof of \cref{lem:absorber-constr}.
In \cref{sec:path-connection}, we prove \cref{lem:path-order,lem:path-connect}.
In \cref{sec:absorber-constr}, we prove \cref{lem:absorber-constr}.
Finally, we prove \cref{lem:path-decomp} in \cref{sec:Hall,sec:path-decomp}.

\subsection{Potential for generalization}\label{subsec:generalization}

In many places throughout the proof, we use structural properties of $C_n^k$ to simplify matters. 
Many of our arguments, however, extend without significant issue to other locally dense, globally sparse settings, such as the Cayley graph~$G$ on $(\ZZ/n\ZZ)^2$ consisting of vectors of length at most $k$. For example, in $G$, disks of radii at most $k$ can play the role of the intervals in our \cref{def:set-properties} and applications of the local lemma go through as the number of disks containing a given vertex grows only polynomially in $k$. An important input to our proof is that vertices at distance~$o(k)$ in the base cycle of $C_n^k$ share $\Omega(k)$ common neighbors in any $(1/2+\eps)$-subgraph of $C_n^k$ (by a simple application of the pigeonhole principle), supplying robust local connectivity. This same property holds for $G$ and, more generally, for any graph defined on a metric space where vertices are adjacent if and only if they have distance at most $k$.

\section{General lemmas}\label{sec:lemmas}

We begin by presenting a few additional lemmas which we will use throughout the proof.

The first lemma is the main way in which we access the property that $G$ is $\alpha$-Dirac. 
Repeated applications of this lemma allow us to construct, for example, an odd cycle in $G$ through an arbitrary vertex.

\begin{lemma}\label{lem:nbhd-inter}
    Let $x$ and $y$ be vertices of $G$.
    If $\dist_C(x,y)=d<2\alpha k$, then $x$ and $y$ have at least $2\alpha k-d-1$ common neighbors in $G$.
\end{lemma}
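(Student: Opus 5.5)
The plan is a pigeonhole/inclusion–exclusion count carried out inside the ambient neighbourhoods in $C_n^k$. Write $N_C(v)$ for the neighbourhood of $v$ in $C_n^k$: it is the set of the $2k$ vertices at cycle-distance between $1$ and $k$ from $v$. Since $G$ is $\alpha$-Dirac, $N_G(x)\subseteq N_C(x)$ with $\abs{N_G(x)}\geq(1+\alpha)k$, and likewise for $y$.

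First I count the common neighbours of $x$ and $y$ in the host graph $C_n^k$. Assume $d\geq 1$; the case $x=y$ is trivial, since then $x$ has at least $(1+\alpha)k\geq 2\alpha k-1$ neighbours. I also assume $n\geq 2k+1$, so that $C_n^k$ is not complete and the neighbourhoods really are the two arcs; if $n\leq 2k+1$ the count below only improves, since all vertices are then mutually adjacent in $C_n^k$.

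The union $N_C(x)\cup N_C(y)$ lies in the set of vertices within cycle-distance $k$ of the arc $[x,y]$. That set has at most $2k+d+1$ vertices. Removing $x$ and $y$ themselves, we still have $N_C(x)\cup N_C(y)\subseteq W$ for some set $W$ of size at most $2k+d+1$ (the bound need not be tight). It is cleaner to work with $W\setminus\{x,y\}$, which has size at most $2k+d-1$.

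Next, apply inclusion–exclusion to $N_G(x)\setminus\{y\}$ and $N_G(y)\setminus\{x\}$. Both lie in $W\setminus\{x,y\}$, and each has size at least $(1+\alpha)k-1$. Hence their intersection has size at least
\[2\bigl((1+\alpha)k-1\bigr)-(2k+d-1)=2\alpha k-d-1.\]
Every vertex in this intersection is a common neighbour of $x$ and $y$ in $G$, which gives the claimed bound. The hypothesis $d<2\alpha k$ only serves to make the bound nontrivial.

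The only delicate step is the bookkeeping in the size of the union window: one must check that the window has at most $2k+d+1$ vertices and that excluding $x$ and $y$ (each possibly a neighbour of the other) is accounted for correctly. If the constant comes out off by one, I would instead count directly the vertices outside the overlap $N_C(x)\cap N_C(y)$: there are exactly $d$ such vertices on each side, and subtracting the $G$-non-neighbours from $\abs{N_C(x)\cap N_C(y)}$ gives the same estimate with room to spare.
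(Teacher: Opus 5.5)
Your proof is correct and is essentially the paper's argument. The paper lower-bounds $|N_G(x)\cap N_G(y)|$ by $|I_x\cap I_y|=2k+1-d$ minus the at most $2k+1-\deg_G(x)$ and $2k+1-\deg_G(y)$ non-neighbours in the two windows of length $2k+1$. Your bound via $|I_x\cup I_y|\le 2k+d+1$ is the same inclusion--exclusion count.

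Discarding $x$ and $y$ is harmless but unnecessary, since $|N_G(x)|+|N_G(y)|-|W|\ge 2\alpha k-d-1$ already holds. Your fallback in the last paragraph is also not needed, and its count is slightly off. For $1\le d\le k$ the set $N_C(x)\setminus N_C(y)$ has $d+1$ elements, not $d$, because it contains $y$. So that route gives exactly the same bound, not one with room to spare.
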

\begin{proof}
    Let $I_x$ and $I_y$ be intervals of length $2k+1$ centered around $x$ and $y$, respectively.
    We have $N_G(x)\subset I_x$ and $N_G(y)\subset I_y$. Therefore,
    \begin{align*}
        \abs{N_G(x)\cap N_G(y)}
        &\geq \abs{I_x\cap I_y}-\abs{I_x\setminus N_G(x)}-\abs{I_y\setminus N_G(y)}\\
        &=2k+1-d-(2k+1-\deg_G(x))-(2k+1-\deg_G(y))\\
        &\geq 2(1+\alpha)k-(2k+1)-d\\
        &=2\alpha k-d-1.\qedhere
    \end{align*}
\end{proof}

Next, we show \cref{lem:distributed-partition}, which states that well-distributed sets can be partitioned into well-distributed sets. The proof is a simple application of the Lov\'asz local lemma.

\begin{proof}[Proof of \cref{lem:distributed-partition}]
    We apply the Lov\'asz local lemma.
    Consider a procedure wherein a vertex in $S$ is allocated to $S_1$ with probability $\eps_1/\eps$ and to $S_2$ otherwise.
    Let $\mathcal T$ be a collection of subsets of $V(G)$ defined to contain exactly the intervals of length at most $2k+1$, as well as the sets $N_G(x)\cap N_G(y)$ for each pair $x,y\in V(G)$. For each $T\in\mathcal T$ and $j\in\{1,2\}$, let $A_{T,j}$ be the event that $\abs{S_j\cap T}$ and $\eps_j\abs{T}$ differ by more than $rk^{4/5}$.

    We note the following:
    \begin{enumerate}
        \item If no event $A_{T,j}$ occurs, then $S_1$ and $S_2$ are $\eps_1$-distributed and $\eps_2$-distributed, respectively.

        \item Each set $T\in\mathcal T$ is of size at most $2k+1$.

        \item Since $N_G(x)\cap N_G(y)$ is only nonempty when $\dist_C(x,y)\leq 2k$, each vertex $v\in V(G)$ lies in at most $2\binom{2k+1}2\leq 5k^2$ sets $T\in\mathcal T$.
    \end{enumerate}
    Properties (2) and (3) imply that each event $A_{T,j}$ is independent of all but at most $21k^3$ other events.
    Fix $j=1$ and $T$ for the moment.
    The size $X=\abs{S_1\cap T}$ has distribution
    $\Bin\left(\abs{S\cap T},{\eps_1}/{\eps}\right)$
    with mean $\mu=\frac{\eps_1}{\eps}\abs{S\cap T}$. Since $S$ is $(\eps,r)$-distributed, we have
    \[\abs[\big]{\mu-\eps_1\abs{T}}=\frac{\eps_1}{\eps}\abs[\big]{\abs{S\cap T}-\eps\abs{T}}\leq\frac{\eps_1}{\eps}rk^{4/5}.\]
    Therefore, 
    it follows by Chernoff's bound that
    \begin{align*}
    \Pr\big[A_{T,1}\big]
    &\leq  \Pr\left[X \notin \eps_1\abs{T} \pm rk^{4/5}   \right]\\
    &\leq\Pr\left[\abs{X-\mu}\geq rk^{4/5}-\abs[\big]{\mu-\eps_1\abs{T}}\right]\\
    &\leq\Pr\left[\abs{X-\mu}\geq rk^{4/5}\left(1-\frac{\eps_1}{\eps}\right)\right]\\
    &\leq 2\exp\left(-\frac{r^2k^{8/5}(\eps_2/\eps)^2}{3\mu}\right)\leq 2\exp\left(-\frac{r^2k^{3/5}\eps_2^2}{6\eps_1\eps}\right) \\
    &\leq 2\exp\left(-\frac{k^{1/5}\eps^2}{6\eps_1\eps}\right) \leq 2\exp\left(-\frac{k^{1/5} }{6}\right).
    \end{align*}
    Our assumptions on $\eps_1$, $\eps_2$, and $k$ imply that this probability is at most $1/(60k^3)$. The result now follows from the Lov\'asz local lemma.
\end{proof}

We will also frequently use the following corollary of \cref{lem:path-connect} (which is not proved until later on in the paper).

\begin{corollary}\label{cor:path-connect}
    Let $S$ be an $\eps$-distributed set.
    Let $T$ be an $\eps'$-locally sparse set, disjoint from~$S$, of some even size $2t$.
    Suppose that $T$ can be enumerated as $v_1,w_1,\ldots,v_t,w_t$ in such a way that $\dist_C(v_i,w_i)\leq L\cdot k$ for each $i$.

    Suppose that $L\geq 1$ and $\alpha^{-1}k^{-1/5}<\eps'<10^{-5}L^{-1}\alpha^2\eps$.

    Then there exists, for each $1\leq i\leq t$, some path $P_i=(v_i,s_{1,i},\ldots,s_{\ell_i,i},w_i)$ of even length $\ell_i\leq 6+12\alpha^{-1}k^{-1}\dist_C(v_i,w_i)$ in $G$ whose internal vertices all lie in $S$. Moreover, we can ensure that the $P_i$ are internally vertex-disjoint.
\end{corollary}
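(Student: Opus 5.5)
The plan is to deduce \cref{cor:path-connect} directly from \cref{lem:path-connect}. The only hypothesis of the lemma not already present is the multiplicity bound $M$. So I will show that the intervals $[v_i,w_i]$, suitably oriented, cover each point of $C_n$ a bounded number of times, with the bound linear in $L$ and in the local density of $T$. The path conclusions of the two statements are identical, and the lemma's bound on $\ell_i$ uses $\dist_C(v_i,w_i)$. I will therefore orient each interval $I_i$ along the shorter arc, so that $\abs{I_i}=\dist_C(v_i,w_i)+1\leq Lk+1$. If the lemma insists on clockwise intervals, I relabel $v_i\leftrightarrow w_i$ (reversing the resulting path afterwards), which is harmless.

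\textbf{Step 1: bounding the multiplicity.} Fix a point $z\in C_n$. Any interval $I_i$ containing $z$ has both endpoints within distance $Lk$ of $z$. Hence $v_i$ lies in the interval $J_z$ of length $2Lk+1$ centred at $z$, and so the number of such $i$ is at most $\abs{T\cap J_z}$. Cover $J_z$ by at most $L+1$ intervals of length at most $2k+1$ (taking $\lceil L\rceil+1$ if $L$ is not an integer). Local sparsity of $T$ then gives
\[
\abs{T\cap J_z}\leq (L+2)\bigl(\eps'(2k+1)+k^{4/5}\bigr)\leq 3L\bigl(3\eps' k+k^{4/5}\bigr).
\]
Since $\eps'>\alpha^{-1}k^{-1/5}\geq k^{-1/5}$, we have $k^{4/5}\leq \eps' k$. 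This gives $M\leq 12L\eps' k$, say.

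\textbf{Step 2: verifying the hypotheses of \cref{lem:path-connect}.} The lemma needs $M<\frac{\alpha\eps''}{12}k$ and $\alpha^{-1}k^{-1/5}\leq\eps''\leq 10^{-3}\alpha\eps$, where $\eps''$ is the sparsity parameter fed into the lemma. The point is that the lemma only requires \emph{some} sparsity parameter, so I may enlarge it: $T$ is $\eps''$-locally sparse for any $\eps''\geq\eps'$. Choose $\eps'':=10^{3}L\alpha^{-1}\eps'$. Then
\[
\frac{\alpha\eps''}{12}k=\frac{10^3}{12}L\eps' k>12L\eps' k\geq M,
\]
and the lower bound $\eps''\geq\eps'>\alpha^{-1}k^{-1/5}$ is immediate. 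The upper bound reads $10^3L\alpha^{-1}\eps'\leq10^{-3}\alpha\eps$, i.e.\ $\eps'\leq10^{-6}L^{-1}\alpha^{2}\eps$. This is slightly stronger than the assumed $\eps'<10^{-5}L^{-1}\alpha^2\eps$, so I will tighten the constants in Step 1 until the hypothesis exactly suffices. The slack available is using $J_z$ of length $2Lk+1$ covered by about $L+1$ pieces, and the bound $k^{4/5}\leq\eps' k$. For instance, $M\leq 4L\eps'k$ would allow $\eps''=10^{2}L\alpha^{-1}\eps'$, which requires exactly $\eps'\leq 10^{-5}L^{-1}\alpha^2\eps$.

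Applying \cref{lem:path-connect} then yields the paths with the stated length bound and internal disjointness. The main obstacle is purely this constant bookkeeping in Step 1. If the constants do not close, I would sharpen the count by noting that only intervals oriented through $z$ from one side count. Each $z$ is then hit only by pairs with $v_i$ at distance at most $Lk$ on a prescribed side, and the two sides are counted separately with a single covering each.
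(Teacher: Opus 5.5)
Your proposal is correct and is essentially the paper's proof. The paper also applies \cref{lem:path-connect} with the enlarged sparsity parameter $\eps''=100\alpha^{-1}L\eps'$, which is exactly your tightened choice. It bounds the multiplicity by counting, via local sparsity, the $v_i$ within distance $Lk$ of a fixed point $z$, summing $\eps'\abs{I_j}$ over a partition of $J_z$ rather than multiplying by the number of pieces, and obtains $M\leq 6\eps'Lk<\frac{\alpha\eps''}{12}k$. Your explicit relabelling $v_i\leftrightarrow w_i$, so that each clockwise interval is the shorter arc, is left implicit in the paper, although both the covering count and the length bound rely on it.
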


\begin{proof}
    Let $\eps'':=100\alpha^{-1}\eps'L$; we will apply \cref{lem:path-connect} with $\eps''$ instead of $\eps'$. 
    Note that $\alpha^{-1}k^{-1/5}<\eps'<\eps'' \leq \frac1{1000}\alpha\eps$. 
    In particular, $T$ is $\eps''$-locally sparse. 
    It thus suffices to show that, if the clockwise intervals connecting each $v_i$ and $w_i$ are drawn, then they cover each point fewer than $\frac{\alpha\eps''}{12}k$ times.

    Indeed, take any $z\in V(C_n)$. If $z$ is covered by some interval $[v_i,w_i]$, then in particular we have $\dist_C(z,v_i)\leq L\cdot k$. Since $T$ is $\eps'$-locally sparse, there are at most
    \[\eps'(2Lk+1)+\left(\frac{2Lk+1}{2k+1}+1\right)k^{4/5}\]
    values of $i$ which satisfy this. The result follows from
    \[\eps'(2Lk+1)+\left(\frac{2Lk+1}{2k+1}+1\right)k^{4/5}\leq 3\eps'Lk+3Lk^{4/5}\leq 6\eps'Lk<\frac{\alpha\eps''}{12}k.\qedhere\]
\end{proof}

We conclude the section by giving a quick proof of \cref{lem:loc-sparse-removal}.

\begin{proof}[Proof of \cref{lem:loc-sparse-removal}]
    It is clear that $G'$ can be viewed as a subgraph of $C_{n'}^k$.
    For any vertex $x\in V(G)\setminus J$, we have
    \[\deg_{G'}(x)=\deg_G(x)-\abs{N_G(x)\cap J}.\]
    Since $N_G(x)$ is contained in an interval of length $2k+1$ in $G$, we thus have
    \[\deg_{G'}(x)\geq(1+\alpha)k-\left(\eps(2k+1)+k^{4/5}\right)\geq k+\left(\alpha-3\eps-k^{-1/5}\right)k.\]
    Now, let $I$ be an interval of length $2k+1$ in $G$, and let $I'=I\cap V(G')$; this is an interval of length at most $2k+1$ in $G'$. We conclude that
    \[\abs{S\cap I}=\abs{S\cap I'}\leq \eps'\abs{I'}+rk^{4/5}\leq\eps'\abs{I}+rk^{4/5}.\qedhere\]
\end{proof}

\section{Connecting paths}\label{sec:path-connection}

We now begin proving the central lemmas in our proof of \cref{thm:main}.
We will do this roughly in reverse order, as the proofs of \cref{lem:absorber-constr,lem:path-decomp} are rather more technical than the others. Moreover, the proof of \cref{lem:absorber-constr} will use \cref{lem:path-connect} (via \cref{cor:path-connect}) a few times.
We first prove \cref{lem:path-order} via a simple combinatorial argument.

\begin{proof}[Proof of \cref{lem:path-order}]
    Consider the (not necessarily simple) graph $G_1$ with vertex set $S$ in which edges are drawn between $s$ and $\sigma(s)$ for each $s$. 
    We wish to draw additional edges on the vertex set $S$ in such a way that the resulting edges form in total an Eulerian cycle, and vertices have degree $4$ if they are fixed points of $\sigma$ and $2$ otherwise.
    We draw such edges (called \defn{arcs}) around the circle $C_n$ in either direction.
    Our aim is to draw the arcs in such a way that no vertex is covered by more than $12$ arcs.
    The \emph{length} of an arc is the number of edges of $C_n$ over which it lies.
    It will be helpful to direct our Eulerian cycle.

    Consider the selection of arcs which minimizes the total length of all arcs.
    \begin{claim}
        There are no three nested arcs, i.e., 
        there are no six vertices $a_1,a_2,a_3,b_3,b_2,b_1$ appearing in clockwise order such that $a_1\frown b_1$, $a_2\frown b_2$, and $a_3\frown b_3$ are all clockwise arcs.
    \end{claim}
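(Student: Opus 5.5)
We argue by contradiction against the minimality of the total arc length. Suppose $a_1,a_2,a_3,b_3,b_2,b_1$ appear in clockwise order with clockwise arcs $a_1\frown b_1$, $a_2\frown b_2$, $a_3\frown b_3$. Every arc is a \emph{connecting} edge of the Eulerian cycle; in particular, none of them is a $\sigma$-edge. Orient the Eulerian cycle. Each arc is then traversed in some direction, and we record its traversal orientation relative to the clockwise orientation on the circle. There are three arcs and only two orientations, so by pigeonhole two of them, say $a_i\frown b_i$ and $a_j\frown b_j$ with $i<j$, are traversed consistently: both from their $a$-end to their $b$-end, or both from their $b$-end to their $a$-end. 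Then $a_i\frown b_i$ is the outer arc of the pair and $a_j\frown b_j$ is the inner one.

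The key step is the standard $2$-opt style uncrossing move. Remove the two arcs $a_i\frown b_i$ and $a_j\frown b_j$ and replace them by two shorter arcs. The natural candidates are $a_i\frown a_j$ and $b_j\frown b_i$, drawn clockwise inside the outer arc. Their total length equals the length of the outer arc minus that of the inner arc, which is strictly smaller than the sum of the two original lengths.

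We must check that the result is still a single Eulerian cycle with the same degrees. Degrees are clearly preserved. Connectivity is where the orientation choice matters. The directed cycle has the form
\[a_i\to b_i \;\cdots P\cdots\; a_j\to b_j \;\cdots Q\cdots\; a_i.\]
Reconnecting as $a_i\to a_j$, then $P$ reversed back to $b_i$, then $b_i\to b_j$, then $Q$, yields a single closed trail. This holds in the "consistent orientation" case, which is precisely why we applied pigeonhole. If the two arcs were traversed in opposite senses, the same swap would split the cycle into two pieces. The alternative swap to $a_i\frown b_j$ and $a_j\frown b_i$ does not shorten the total length, so pigeonhole among three arcs is what rescues us. Reversing a segment of an undirected Eulerian circuit is harmless.

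This contradicts minimality, proving the claim. The main obstacle is the careful bookkeeping that the reconnection gives one circuit rather than two, including degenerate coincidences. These arise when some $a$'s or $b$'s coincide (fixed points of $\sigma$ appear with degree $4$), or when an arc has length zero. For such cases we would either perturb the argument or note that the new arcs can be drawn with length zero.
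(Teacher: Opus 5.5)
Your proof is correct and follows the paper's argument: by pigeonhole, two of the three nested arcs are traversed in the same direction, and the swap to $a_i\frown a_j$ and $b_j\frown b_i$ keeps a single Eulerian cycle while strictly decreasing the total length, contradicting minimality. Your closing concern about degenerate coincidences is unnecessary: the six vertices in the claim are distinct (the paper reduces to that case separately, at the cost of a factor of three), so the inner arc has positive length and the decrease is strict.
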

    \begin{proofofclaim}
        Suppose such a configuration exists.
        Then, without loss of generality, there are two nested arcs $a_1\frown b_1$ and $a_2\frown b_2$ which are traversed in the same direction.
        Say without loss of generality that the direction is clockwise, and that $a_1,a_2,b_2,b_1$ appear in clockwise order.
        The Eulerian cycle is then traversed as
        \[a_1\to b_1\longrightarrow a_2\to b_2 \longrightarrow a_1,\]
        where a long arrow indicates potentially many edges of $G_1$ concatenated with many arcs. 
        The arcs $a_1\frown b_1$ and $a_2\frown b_2$ thus may be replaced by the shorter-in-sum arcs $a_1\frown a_2$ and $b_1\frown b_2$; the resulting graph will still form an Eulerian cycle.
        This contradicts our assumption on the configuration of arcs.
    \end{proofofclaim}

    \begin{claim}
        There are no three arcs $a_1\frown b_1$, $a_2\frown b_2$, and $a_3\frown b_3$ with endpoints appearing in the order $a_1,a_2,a_3,b_1,b_2,b_3$.
    \end{claim}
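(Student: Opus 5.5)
The plan is to argue exactly as in the previous claim: an exchange argument against the minimality of the total arc length, with pigeonhole supplying two arcs that are traversed in the same direction. Suppose, for contradiction, that there are three clockwise arcs $a_1\frown b_1$, $a_2\frown b_2$, $a_3\frown b_3$ whose endpoints appear in clockwise order $a_1,a_2,a_3,b_1,b_2,b_3$. Then any two of them cross: for $i<j$ the endpoints appear in the order $a_i,a_j,b_i,b_j$. The directed Eulerian cycle traverses each arc either from its $a$-end to its $b$-end or the other way. With three arcs and two possible directions, two of them, say $a_i\frown b_i$ and $a_j\frown b_j$ with $i<j$, are traversed in the same direction. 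By symmetry, reversing the orientation of the Eulerian cycle if necessary, we may assume both are traversed from $a$ to $b$.

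The Eulerian cycle then reads
\[a_i\to b_i\overset{P}{\longrightarrow} a_j\to b_j\overset{Q}{\longrightarrow} a_i,\]
where $P$ and $Q$ are walks made of edges of $G_1$ and arcs. We replace the two arcs by a new arc $a_i\frown a_j$ and a new arc $b_i\frown b_j$, each drawn clockwise inside the clockwise intervals $[a_i,a_j]$ and $[b_i,b_j]$ respectively. The closed walk
\[a_i\to a_j\overset{P^{-1}}{\longrightarrow} b_i\to b_j\overset{Q}{\longrightarrow} a_i\]
uses every edge of $G_1$ and every arc exactly once, so the new configuration is still a single Eulerian cycle. Moreover, each vertex keeps its degree, so the degree requirement (degree $4$ at fixed points of $\sigma$ and $2$ elsewhere) still holds.

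It remains to compare lengths, measuring positions clockwise from $a_i$. The old arcs have total length $(b_i-a_i)+(b_j-a_j)$, while the new ones have total length $(a_j-a_i)+(b_j-b_i)$. The difference is $2(b_i-a_j)$, which is positive because $a_j$ strictly precedes $b_i$ in the given order. This contradicts the choice of the arc configuration as one of minimum total length.

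The only delicate point is the bookkeeping in the exchange. One must check that reversing $P$ is legitimate, which holds because edges and arcs are undirected and the direction is only an aid. One must also check that the reconnection really yields one closed trail rather than two. This is exactly where the hypothesis that the two arcs are traversed in the same direction is used: if they were traversed in opposite directions, the analogous swap would split the cycle into two components. That is the reason three arcs are needed in the claim rather than two.
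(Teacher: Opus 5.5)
Your proposal is correct and follows the paper's own proof. Pigeonhole gives two pairwise-crossing arcs traversed in the same direction, and replacing them by $a_i\frown a_j$ and $b_i\frown b_j$ keeps a single Eulerian cycle while strictly decreasing the total length, contradicting minimality. Compared with the paper, you also spell out the length saving $2(b_i-a_j)>0$ and why the same-direction hypothesis prevents the cycle from splitting, both of which the paper leaves implicit.
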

    \begin{proofofclaim}
        Suppose such a configuration exists.
        Then there are two arcs $a_1\frown b_1$ and $a_2\frown b_2$ which are traversed in the same direction and with endpoint order $a_1,a_2,b_1,b_2$.
        
        The Eulerian cycle is then traversed as
        \[a_1\to b_1\longrightarrow a_2\to b_2\longrightarrow a_1.\]
        The arcs $a_1\frown b_1$ and $a_2\frown b_2$ thus may be replaced by the shorter-in-sum arcs $a_1\frown a_2$ and $b_1\frown b_2$; the resulting graph will still form an Eulerian cycle.
        This contradicts our assumption on the configuration of arcs.
    \end{proofofclaim}

    Now, suppose that some vertex is covered by some arcs $a_i\frown b_i$ for $1\leq i\leq m$, where the arcs are oriented clockwise from $a_i$ to $b_i$ and where $a_1,\ldots,a_m$ appear clockwise in this order. 
    At the cost of a factor of at most three (due to fixed points of $\sigma$) we may assume that the~$a_i$ and the $b_i$ are all pairwise distinct.
    Consider the permutation $\tau$ such that the $b_i$ appear clockwise in the order $b_{\tau(1)},\ldots,b_{\tau(m)}$. By our two claims, this permutation has neither the patterns $123$ nor $321$. The Erd\H{o}s--Szekeres theorem thus implies that $m\leq 4$.
		Consequently, no vertex is covered by more than twelve arcs.
\end{proof} 

We now prove \cref{lem:path-connect}, which allows us to connect, under fairly mild conditions, many pairs of vertices by internally vertex-disjoint paths from a distributed set $S$. 
The proof proceeds in three steps:

\begin{enumerate}
    \item For each pair $(v_i,w_i)$ to be connected, ``interpolate'' the arc $v_i\frown w_i$ by vertices of $V(C_n)$ at distance at most $\alpha k/6$.

    \item For each resulting vertex (as well as each $v_i$ and $w_i$), find a nearby vertex of $S$.

    \item Using \cref{lem:nbhd-inter}, connect the chosen nearby vertices of $S$ along each arc, as well as the endpoints $v_i$ and $w_i$, by short paths with internal vertices chosen from $S$. 
\end{enumerate}

It is the third step where we use the property that distributed sets have appropriately controlled intersections with $N_G(x)\cap N_G(y)$ for vertices $x$ and $y$ of $G$. We begin by writing this step in slightly more generality.

\begin{lemma}[Pair connection]\label{lem:pair-connect}
    Let $S$ be an $\eps$-distributed set, and let $T$ be an $(\eps',r)$-locally sparse set.
    Let $H$ be a graph with vertex set $T$ and maximum degree at most $\Delta$ for which, if~$xy\in E(H)$, then $\dist_C(x,y)\leq \alpha k$.
    Suppose that $rk^{-1/5}<\eps'<\frac{\alpha\eps}{12\Delta}$.
    
    Then one can find an injection $\phi\colon E(H)\to S$ such that, for each $e\in E(H)$, the vertex $\phi(e)\in S$ is adjacent in $G$ to both endpoints of $e$.
\end{lemma}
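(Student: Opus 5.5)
We prove \cref{lem:pair-connect} by Hall's theorem on an auxiliary bipartite graph: one side is $E(H)$, the other is $S$, and $e=xy$ is joined to every $s\in S\cap N_G(x)\cap N_G(y)$. An injection $\phi$ as required is exactly a matching saturating $E(H)$. It therefore suffices to show that every $F\subseteq E(H)$ has at least $\abs{F}$ neighbours in $S$.

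\emph{Step 1: each edge has many candidates.} For $e=xy\in E(H)$ we have $\dist_C(x,y)\leq\alpha k$. \cref{lem:nbhd-inter} then gives $\abs{N_G(x)\cap N_G(y)}\geq \alpha k-1$. (Read as stated it needs $d<2\alpha k$, which holds.) Since $S$ is $\eps$-distributed,
\[\abs{S\cap N_G(x)\cap N_G(y)}\geq \eps(\alpha k-1)-k^{4/5}\geq \tfrac12\eps\alpha k.\]
The last inequality uses $\eps>12\Delta\eps'/\alpha> 12 k^{-1/5}/\alpha$, so $\eps\alpha k\gg k^{4/5}$.

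\emph{Step 2: few edges compete for the same region.} Fix $F$. We cover the circle by consecutive intervals of length $k$ (or, if $n<k$, by the whole vertex set) and group the edges of $F$ by the interval containing their endpoint $x$. Each candidate vertex for $e=xy$ lies within distance $k$ of $x$. Each $x$ lies in $T$, and $T$ is $(\eps',r)$-locally sparse, so any interval $J$ of length $k$ satisfies
\[\abs{\{e\in E(H): e\cap J\neq\emptyset\}}\leq \Delta(\eps' k+rk^{4/5}).\]
By hypothesis this is less than $\Delta\cdot 2\eps' k<\tfrac16\alpha\eps k$.

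\emph{Step 3: Hall's condition.} The cleanest route avoids global Hall-type counting. We use the defect-free local condition directly, via a greedy argument: process the edges of $H$ in any order, and assign to each edge $e=xy$ an unused vertex of $S\cap N_G(x)\cap N_G(y)$. Every candidate vertex for $e$ lies in the interval of length $2k+1$ centred at $x$. Every previously assigned vertex in that interval was assigned to an edge with an endpoint in the interval of length $4k+1$ around $x$. By local sparseness of $T$, at most
\[\Delta\bigl(\eps'(4k+1)+3rk^{4/5}\bigr)\leq 6\Delta\eps' k<\tfrac12\alpha\eps k\]
such edges exist; here $rk^{4/5}<\eps' k$ by the assumption $rk^{-1/5}<\eps'$. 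Hence some candidate remains unused by Step 1, and the greedy assignment never fails. This makes Hall unnecessary.

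The only delicate step is the constant bookkeeping in Step 3. One must check that the factor $12$ in $\eps'<\alpha\eps/(12\Delta)$ beats the blown-up interval (length $4k+1$ split into at most two intervals of length at most $2k+1$, each contributing a $rk^{4/5}$ error), together with the loss from \cref{lem:nbhd-inter} and the $k^{4/5}$ error in the distributedness of $S$. If the constants are tight, we refine by restricting candidates to $N_G(x)\cap N_G(y)$, which lies in an interval of length about $2k$ around $x$. We would then count only edges whose endpoints are within $2k$ of $x$, and the constant above should improve accordingly.
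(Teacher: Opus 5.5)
Your argument is correct and, once you drop the Hall framing and the unused Step~2, it is exactly the paper's proof. You greedily assign to each edge $xy$ one of its at least $\eps(\alpha k-1)-k^{4/5}\geq\frac{\alpha\eps}2k$ candidates in $S\cap N_G(x)\cap N_G(y)$; any already-used candidate equals $\phi(x'y')$ with $\dist_C(x,x')\leq 2k$, so local sparseness of $T$ caps the competitors at about $6\Delta\eps'k<\frac{\alpha\eps}2k$.

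The only slip is the error term $3rk^{4/5}$ in Step~3. Your own splitting of the window of length $4k+1$ into two intervals gives $2rk^{4/5}$, and only with that does the bound $6\Delta\eps'k$ come out. With $3rk^{4/5}$ you get roughly $7\Delta\eps'k$, which still suffices if you compare it directly against $\eps(\alpha k-1)-k^{4/5}$ rather than against $\frac{\alpha\eps}2k$.
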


\begin{proof}
    We define $\phi$ greedily.
    Order the edges of $H$ arbitrarily.
    Suppose we wish to select $\phi(xy)$ for some $xy\in E(H)$, and let $S'$ be the set of vertices of $S$ already mapped to by $\phi$. 
    Since $S$ is $\eps$-distributed and $\dist_C(x,y)\leq\alpha k$, we have by \cref{lem:nbhd-inter} that
    \[\abs{S\cap N_G(x)\cap N_G(y)}\geq \eps\abs{N_G(x)\cap N_G(y)}-k^{4/5}\geq\eps(\alpha k-1)-k^{4/5}\geq\frac{\alpha\eps}2k.\]
    On the other hand, consider $S'\cap N_G(x)\cap N_G(y)$. 
    Each vertex $v$ in this set can be written as~$\phi(x'y')$ for some $x'y'\in E(H)$; we have
    \[\dist_C(x,x')\leq\dist_C(x,v)+\dist_C(v,x')\leq 2k.\]
    Since $T$ is $(\eps',r)$-locally sparse, there are at most $4\eps'k+2rk^{4/5}$ vertices $x'$ satisfying the above, and thus at most $(4\eps'k+2rk^{4/5})\Delta\leq 6\Delta\eps'k$ edges $x'y'$ under consideration. 
    We conclude that
    \[\abs{S'\cap N_G(x)\cap N_G(y)}\leq 6\Delta\eps'k<\frac{\alpha\eps}2k\leq\abs{S\cap N_G(x)\cap N_G(y)}.\]
    As a result, at each step one may choose $\phi(xy)\in S\setminus S'$. This concludes the proof.
\end{proof}

We may now prove our main path connection lemma.

\begin{proof}[Proof of \cref{lem:path-connect}]

    First, partition $S$ into a $16\eps'$-distributed set $S_1$ and an $(\eps-16\eps')$-distributed set $S_2$ using \cref{lem:distributed-partition}.

    Now, within each interval $I_i$, pick a subset $J_i\subset I_i$ containing the endpoints of $I_i$ such that, when the elements of $J_i$ are enumerated in clockwise order, consecutive pairs of vertices are within $C_n$-distance at least $\alpha k/6$ and at most $\alpha k/2$. Let $\mathcal J=\bigsqcup_{i=1}^tJ_i$ be a multiset.

    \begin{claim}
        The multiset $\mathcal J$ is $2\eps'$-locally sparse.
    \end{claim}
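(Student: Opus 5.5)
Fix an interval $I$ of $C_n$ of length at most $2k+1$; we must show $\abs{I\cap\mathcal J}\le 2\eps'\abs{I}+k^{4/5}$, counting with multiplicity. We split the contributions according to their source. Each element of $\mathcal J$ lying in $I$ comes from some $J_i$. It is either an endpoint of $I_i$ (that is, $v_i$ or $w_i$) or an interior interpolation point of $J_i$. The endpoint contributions form a sub-multiset of $T$. Since each element of $T$ arises as an endpoint of at most one interval (counted with multiplicity, matching the multiset $T$), these contribute at most $\abs{I\cap T}\le\eps'\abs{I}+k^{4/5}$.

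For the interior points, we use that consecutive elements of $J_i$ are at distance at least $\alpha k/6$. Hence for each $i$, the set $J_i\cap I$ has at most $\abs{I}/(\alpha k/6)+1\le 6(2k+1)/(\alpha k)+1\le 14/\alpha$ elements, say. Moreover, $J_i$ can meet $I$ only if $I_i$ intersects $I$. The intervals $I_i$ cover each point at most $M$ times. Every $I_i$ meeting $I$ either contains one of the two endpoints of $I$, or is contained in $I$ and so has both of its endpoints in $I\cap T$. The number of such $i$ is therefore at most $2M+\abs{I\cap T}$.

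It is cleaner to refine this by counting interior points directly. Every interior point of $J_i$ in $I$ either lies in an $I_i$ covering an endpoint of $I$, or lies in an $I_i\subseteq I$. In the first case we have at most $2M$ intervals, each contributing $O(\alpha^{-1})$ points. In the second case the number of interior points of $J_i$ is at most $6\abs{I_i}/(\alpha k)$, and the total over such $i$ is at most $6M\abs{I}/(\alpha k)\le 13M/\alpha$, because the total length of the $I_i\subseteq I$ is at most $M\abs{I}$ by the covering bound. So interior points contribute $O(M/\alpha)$ in all. Using $M<\alpha\eps' k/12$, this is at most roughly $\eps' k\cdot(2+13)/12\cdot\text{const}$.

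This last bound needs calibration. The aim is to show the interior contribution is at most $\eps'\abs I$, so that adding the endpoint bound gives $2\eps'\abs I+k^{4/5}$. The main obstacle is exactly here, when $\abs{I}$ is small: the bound $O(M/\alpha)\approx\eps'k$ is not at most $\eps'\abs{I}$ for short $I$. The fix is to use the spacing more carefully. An interval $I$ of length $\ell<\alpha k/6$ contains at most one interior point of each $J_i$, so interior points in $I$ number at most the number of $I_i$ covering some point of $I$. We bound this by the coverage count at a single point (at most $M$) plus endpoints of the $I_i$ inside $I$, with the latter already controlled by $\abs{I\cap T}$.

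If the inequality is still too tight, we would exploit the freedom in choosing $J_i$: select interpolation points deterministically at positions $\equiv$ a fixed residue-class scheme, or shift them so that short intervals meet few of them. Failing that, we would absorb the $M$ term into the $k^{4/5}$ slack by noting $M$ is used with $M=12$ in the application, together with the fact that in \cref{cor:path-connect} the parameters give $M\le 6\eps'Lk$ only in a regime where the $2\eps'$ factor and the additive term $k^{4/5}$ (with $r$ allowed slightly larger) suffice. Concretely, I expect the intended constants to come out as $\abs{I\cap\mathcal J}\le\abs{I\cap T}+ 12M/(\alpha k)\cdot\abs I+2M$ and then to invoke $M<\alpha\eps'k/12$ to bound the middle term by $\eps'\abs{I}$.
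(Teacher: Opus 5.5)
Your proposal has a genuine gap. It ends with the estimate $\abs{I\cap\mathcal J}\le\abs{I\cap T}+12M\abs{I}/(\alpha k)+2M$ plus a list of possible patches, and the additive $2M$ cannot be absorbed.

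The $2M$ comes from charging each interval $I_i$ that covers an endpoint of $I$ with up to $\abs{I}/d+1$ interior points, where $d=\alpha k/6$, so each such interval pays a ``$+1$''. The crude count is already too weak for $\abs I=2k+1$. There your bound of $28M/\alpha+13M/\alpha$ is only guaranteed to be below $\tfrac{41}{12}\eps'k$, which exceeds the allowance $\eps'(2k+1)$.

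The missing observation, which is the core of the paper's argument, is that every point of $J_i\setminus\{v_i,w_i\}$ lies at distance at least $d$ from both $v_i$ and $w_i$. Suppose $I\not\subseteq I_i$ and $\abs{I\cap I_i}<d$. Then the overlap contains an endpoint of $I_i$ (or is all of $I_i$), so it holds no interior point of $J_i$. Hence, for $\abs I\ge d$, only indices with $\abs{I\cap I_i}\ge d$ contribute interior points. Since $\sum_i\abs{I\cap I_i}\le M\abs I$ by the covering hypothesis, there are at most $M\abs I/d$ such indices. Each contributes at most $\abs{I\cap I_i}/d+1$ points. Summing gives at most $2M\abs I/d=12M\abs I/(\alpha k)<\eps'\abs I$ interior points, with no additive term. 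Adding $\abs{I\cap T}\le\eps'\abs I+k^{4/5}$ finishes the bound.

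You are right that short intervals are genuinely delicate. If $\abs I<d$ and $I\subseteq I_i$ for up to $M$ indices, each such $J_i$ may place an interior point in $I$. The spacing and covering hypotheses then give nothing better than $M$. The paper's sentence asserting that an overlap of length less than $d$ contains no interior point silently skips exactly this case $I\subseteq I_i$.

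Your proposed remedies do not repair it, though. First, $\abs{I\cap T}$ already uses up the entire $k^{4/5}$ slack. Second, $M=12$ only in the proof of \cref{thm:main}. The lemma itself assumes merely $M<\alpha\eps'k/12$, which permits $M$ far above $k^{4/5}$ whenever $\eps'\gg\alpha^{-1}k^{-1/5}$; this can happen in the applications through \cref{cor:path-connect}.

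The right repair is to note where the claim is used. It is invoked only in the subsequent Hall argument, where each vertex of $S_1$ looks at an interval of length about $\alpha k/2\ge d$. On such intervals the argument above is complete.
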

    
    \begin{proofofclaim}
        Write $d:=\alpha k/6$ for notational simplicity. 
        Fix some interval $I$ of length at most $2k+1$.
        Since $T$ is $\eps'$-locally sparse, we have $\abs{I\cap T}\leq\eps'\abs{I}+k^{4/5}$. 
        Consider for each $i$ the intersection $(J_i\cap I)\setminus T$. 
        If the interval $I_i$ intersects $I$ in an interval of length less than~$d$, then this intersection is empty, as no element of $J_i\setminus\{v_i,w_i\}$ is within $d$ of $v_i$ or $w_i$.
        If $(J_i\cap I)\setminus T$ is nonempty, it has size at most $\abs{I\cap I_i}/d+1$.
        Now, since each point of $V(C_n)$ is in at most~$M$ intervals $I_i$, the total length of all such intervals is at most $M\cdot\abs{I}$. In particular, there are at most $M\cdot\abs{I}/d$ such intervals $I_i$. We conclude that
        \[\abs{(\mathcal J\cap I)\setminus T}\leq\frac1d\cdot M\cdot\abs{I}+\frac1d\cdot M\cdot\abs{I}=2M\cdot\frac{\abs{I}}d.\]
        As a result,
        \[\abs{\mathcal J\cap I}\leq\left(\eps'+2Md^{-1}\right)\abs{I}+k^{4/5}.\]
        The claim follows from the inequality $\eps'>12M\alpha^{-1}k^{-1}$.
    \end{proofofclaim}

    We now pair elements of $\mathcal J$ with nearby elements of $S_1$. 
    Construct a bipartite graph $H$ on vertex set $\mathcal J\sqcup S_1$ in which vertices at distance at most $\alpha k/4$ are connected by an edge. 
    Since~$S_1$ is $16\eps'$-locally dense, each element of $\mathcal J$ has degree at least $(4\alpha\eps')k-k^{4/5}$ to~$S_1$. 
    On the other hand, since $\mathcal J$ is $2\eps'$-locally sparse, each element of $S_1$ has degree at most $(2\alpha\eps')k+k^{4/5}$ to~$\mathcal J$.\footnote{In both of these statements, we have accepted losses of multiplicative constants to deal with rounding errors in the lengths of the corresponding intervals.} 
    We compute
    \[(2\alpha\eps')k+k^{4/5}<(3\alpha\eps')k<(4\alpha\eps')k-k^{4/5}.\]
    Therefore, by Hall's theorem, there is an injection $\sigma\colon\mathcal J\to S_1$ for which $\dist_C(x,\sigma(x))\leq\alpha k$ for all $x\in \mathcal J$.
    For each $1\leq i\leq t$, if $J_i$ is enumerated clockwise
    \[
        v_i=x_{i,0},x_{i,1},\ldots,x_{i,\ell_i'-1},x_{i,\ell_i}=w_i,
    \] 
    let $P_i'$ be the sequence of vertices of $V(G)$ given by
    \[P_i':=\big(v_i,\sigma(v_i),\sigma(x_{i,1}),\ldots,\sigma(x_{i,\ell_i'-1}),\sigma(w_i),w_i\big),\]
    so that the internal vertices of $P_i'$ lie in $S_1$ and, by the triangle inequality, consecutive pairs of vertices in $P_i'$ are within distance at most $\alpha k$.

    Our final step is to connect consecutive pairs of vertices in the $P_i'$ by elements of $S_2$. Since the set $\mathcal P'=\bigcup_iP_i'$ is contained in $T\sqcup S_1$, it is $(17\eps',2)$-locally sparse. So, the result follows from \cref{lem:pair-connect}. 
\end{proof}

\section{Construction of the absorber}\label{sec:absorber-constr}

The main object of this section is to prove \cref{lem:absorber-constr}.
The following gadget, which serves as a single component of our absorber, has appeared in earlier work. 

\begin{definition}\label{def:beehive}
    An \defn{$(s, x, y)$-beehive} is a subgraph $O$ of $G$ consisting of:
    \begin{itemize}
        \item two vertex-disjoint paths $P = x\dd p_1 \dd \hdots \dd p_\ell \dd y$ and $Q = s \dd q_1 \dd \hdots \dd q_\ell \dd y$ of the same length from $x$ to $y$ and $s$ to $y$, respectively;
        \item an edge $sx$; and
        \item a collection of $\ell$ vertex-disjoint paths $(U_i)_{i=1}^\ell$ from $p_i$ to $q_i$ such that each path $U_i$ intersects $P \cup Q$ only in its ends.
    \end{itemize}
    We call $\ell$ the \defn{height} of $O$.
\end{definition}

\noindent A chain of beehive absorbers is drawn in \cref{fig:beehive-line}, which we have borrowed from Bedert, Draganić, Müyesser and Pavez-Signé~\cite{bedert2026lov}.

\begin{figure}[!ht]
    \centering

\tikzset{every picture/.style={line width=0.75pt}} 

\begin{tikzpicture}[x=0.75pt,y=0.75pt,yscale=-1,xscale=1]

\draw  [dash pattern={on 0.84pt off 2.51pt}]  (174.19,108.27) -- (174.17,89.26) ;
\draw [shift={(174.17,89.26)}, rotate = 269.92] [color={rgb, 255:red, 0; green, 0; blue, 0 }  ][fill={rgb, 255:red, 0; green, 0; blue, 0 }  ][line width=0.75]      (0, 0) circle [x radius= 3.35, y radius= 3.35]   ;
\draw [shift={(174.19,108.27)}, rotate = 269.92] [color={rgb, 255:red, 0; green, 0; blue, 0 }  ][fill={rgb, 255:red, 0; green, 0; blue, 0 }  ][line width=0.75]      (0, 0) circle [x radius= 3.35, y radius= 3.35]   ;
\draw  [dash pattern={on 0.84pt off 2.51pt}]  (126.98,108.33) -- (126.96,89.7) ;
\draw [shift={(126.96,89.7)}, rotate = 269.92] [color={rgb, 255:red, 0; green, 0; blue, 0 }  ][fill={rgb, 255:red, 0; green, 0; blue, 0 }  ][line width=0.75]      (0, 0) circle [x radius= 3.35, y radius= 3.35]   ;
\draw [shift={(126.98,108.33)}, rotate = 269.92] [color={rgb, 255:red, 0; green, 0; blue, 0 }  ][fill={rgb, 255:red, 0; green, 0; blue, 0 }  ][line width=0.75]      (0, 0) circle [x radius= 3.35, y radius= 3.35]   ;
\draw    (150.01,119.48) -- (174.19,108.27) ;
\draw [shift={(174.19,108.27)}, rotate = 335.12] [color={rgb, 255:red, 0; green, 0; blue, 0 }  ][fill={rgb, 255:red, 0; green, 0; blue, 0 }  ][line width=0.75]      (0, 0) circle [x radius= 3.35, y radius= 3.35]   ;
\draw [shift={(150.01,119.48)}, rotate = 335.12] [color={rgb, 255:red, 0; green, 0; blue, 0 }  ][fill={rgb, 255:red, 0; green, 0; blue, 0 }  ][line width=0.75]      (0, 0) circle [x radius= 3.35, y radius= 3.35]   ;
\draw    (150.01,119.48) -- (126.98,108.33) ;
\draw [shift={(126.98,108.33)}, rotate = 205.83] [color={rgb, 255:red, 0; green, 0; blue, 0 }  ][fill={rgb, 255:red, 0; green, 0; blue, 0 }  ][line width=0.75]      (0, 0) circle [x radius= 3.35, y radius= 3.35]   ;
\draw [shift={(150.01,119.48)}, rotate = 205.83] [color={rgb, 255:red, 0; green, 0; blue, 0 }  ][fill={rgb, 255:red, 0; green, 0; blue, 0 }  ][line width=0.75]      (0, 0) circle [x radius= 3.35, y radius= 3.35]   ;
\draw  [dash pattern={on 0.84pt off 2.51pt}]  (162.34,71.39) -- (138.14,71.42) ;
\draw [shift={(138.14,71.42)}, rotate = 179.92] [color={rgb, 255:red, 0; green, 0; blue, 0 }  ][fill={rgb, 255:red, 0; green, 0; blue, 0 }  ][line width=0.75]      (0, 0) circle [x radius= 3.35, y radius= 3.35]   ;
\draw [shift={(162.34,71.39)}, rotate = 179.92] [color={rgb, 255:red, 0; green, 0; blue, 0 }  ][fill={rgb, 255:red, 0; green, 0; blue, 0 }  ][line width=0.75]      (0, 0) circle [x radius= 3.35, y radius= 3.35]   ;
\draw    (174.19,108.27) .. controls (172.03,109.21) and (170.48,108.6) .. (169.54,106.44) .. controls (168.6,104.28) and (167.05,103.67) .. (164.89,104.61) .. controls (162.73,105.55) and (161.18,104.94) .. (160.23,102.78) .. controls (159.29,100.62) and (157.74,100.01) .. (155.58,100.95) .. controls (153.42,101.89) and (151.87,101.28) .. (150.93,99.12) .. controls (149.98,96.96) and (148.43,96.35) .. (146.27,97.29) .. controls (144.11,98.23) and (142.56,97.62) .. (141.62,95.46) .. controls (140.68,93.3) and (139.13,92.69) .. (136.97,93.63) .. controls (134.81,94.58) and (133.26,93.97) .. (132.31,91.81) .. controls (131.37,89.65) and (129.82,89.04) .. (127.66,89.98) -- (126.96,89.7) -- (126.96,89.7) ;
\draw    (126.96,89.7) -- (138.14,71.42) ;
\draw [shift={(138.14,71.42)}, rotate = 301.47] [color={rgb, 255:red, 0; green, 0; blue, 0 }  ][fill={rgb, 255:red, 0; green, 0; blue, 0 }  ][line width=0.75]      (0, 0) circle [x radius= 3.35, y radius= 3.35]   ;
\draw [shift={(126.96,89.7)}, rotate = 301.47] [color={rgb, 255:red, 0; green, 0; blue, 0 }  ][fill={rgb, 255:red, 0; green, 0; blue, 0 }  ][line width=0.75]      (0, 0) circle [x radius= 3.35, y radius= 3.35]   ;
\draw    (174.17,89.26) -- (162.34,71.39) ;
\draw [shift={(162.34,71.39)}, rotate = 236.5] [color={rgb, 255:red, 0; green, 0; blue, 0 }  ][fill={rgb, 255:red, 0; green, 0; blue, 0 }  ][line width=0.75]      (0, 0) circle [x radius= 3.35, y radius= 3.35]   ;
\draw [shift={(174.17,89.26)}, rotate = 236.5] [color={rgb, 255:red, 0; green, 0; blue, 0 }  ][fill={rgb, 255:red, 0; green, 0; blue, 0 }  ][line width=0.75]      (0, 0) circle [x radius= 3.35, y radius= 3.35]   ;
\draw    (174.17,89.26) .. controls (171.93,90.01) and (170.44,89.26) .. (169.69,87.03) .. controls (168.95,84.79) and (167.46,84.04) .. (165.22,84.79) .. controls (162.99,85.54) and (161.5,84.79) .. (160.75,82.56) .. controls (160,80.33) and (158.5,79.58) .. (156.27,80.33) .. controls (154.03,81.08) and (152.54,80.33) .. (151.8,78.09) .. controls (151.05,75.86) and (149.56,75.11) .. (147.33,75.86) .. controls (145.1,76.61) and (143.6,75.86) .. (142.85,73.63) -- (138.44,71.42) -- (138.44,71.42) ;
\draw    (286.19,108.27) -- (286.17,89.26) ;
\draw [shift={(286.17,89.26)}, rotate = 269.92] [color={rgb, 255:red, 0; green, 0; blue, 0 }  ][fill={rgb, 255:red, 0; green, 0; blue, 0 }  ][line width=0.75]      (0, 0) circle [x radius= 3.35, y radius= 3.35]   ;
\draw [shift={(286.19,108.27)}, rotate = 269.92] [color={rgb, 255:red, 0; green, 0; blue, 0 }  ][fill={rgb, 255:red, 0; green, 0; blue, 0 }  ][line width=0.75]      (0, 0) circle [x radius= 3.35, y radius= 3.35]   ;
\draw    (238.98,108.33) -- (238.96,89.7) ;
\draw [shift={(238.96,89.7)}, rotate = 269.92] [color={rgb, 255:red, 0; green, 0; blue, 0 }  ][fill={rgb, 255:red, 0; green, 0; blue, 0 }  ][line width=0.75]      (0, 0) circle [x radius= 3.35, y radius= 3.35]   ;
\draw [shift={(238.98,108.33)}, rotate = 269.92] [color={rgb, 255:red, 0; green, 0; blue, 0 }  ][fill={rgb, 255:red, 0; green, 0; blue, 0 }  ][line width=0.75]      (0, 0) circle [x radius= 3.35, y radius= 3.35]   ;
\draw  [dash pattern={on 0.84pt off 2.51pt}]  (262.01,119.48) -- (286.19,108.27) ;
\draw [shift={(286.19,108.27)}, rotate = 335.12] [color={rgb, 255:red, 0; green, 0; blue, 0 }  ][fill={rgb, 255:red, 0; green, 0; blue, 0 }  ][line width=0.75]      (0, 0) circle [x radius= 3.35, y radius= 3.35]   ;
\draw [shift={(262.01,119.48)}, rotate = 335.12] [color={rgb, 255:red, 0; green, 0; blue, 0 }  ][fill={rgb, 255:red, 0; green, 0; blue, 0 }  ][line width=0.75]      (0, 0) circle [x radius= 3.35, y radius= 3.35]   ;
\draw  [dash pattern={on 0.84pt off 2.51pt}]  (262.01,119.48) -- (238.98,108.33) ;
\draw [shift={(238.98,108.33)}, rotate = 205.83] [color={rgb, 255:red, 0; green, 0; blue, 0 }  ][fill={rgb, 255:red, 0; green, 0; blue, 0 }  ][line width=0.75]      (0, 0) circle [x radius= 3.35, y radius= 3.35]   ;
\draw [shift={(262.01,119.48)}, rotate = 205.83] [color={rgb, 255:red, 0; green, 0; blue, 0 }  ][fill={rgb, 255:red, 0; green, 0; blue, 0 }  ][line width=0.75]      (0, 0) circle [x radius= 3.35, y radius= 3.35]   ;
\draw    (274.34,71.39) -- (250.14,71.42) ;
\draw [shift={(250.14,71.42)}, rotate = 179.92] [color={rgb, 255:red, 0; green, 0; blue, 0 }  ][fill={rgb, 255:red, 0; green, 0; blue, 0 }  ][line width=0.75]      (0, 0) circle [x radius= 3.35, y radius= 3.35]   ;
\draw [shift={(274.34,71.39)}, rotate = 179.92] [color={rgb, 255:red, 0; green, 0; blue, 0 }  ][fill={rgb, 255:red, 0; green, 0; blue, 0 }  ][line width=0.75]      (0, 0) circle [x radius= 3.35, y radius= 3.35]   ;
\draw    (286.19,108.27) .. controls (284.03,109.21) and (282.48,108.6) .. (281.54,106.44) .. controls (280.6,104.28) and (279.05,103.67) .. (276.89,104.61) .. controls (274.73,105.55) and (273.18,104.94) .. (272.23,102.78) .. controls (271.29,100.62) and (269.74,100.01) .. (267.58,100.95) .. controls (265.42,101.89) and (263.87,101.28) .. (262.93,99.12) .. controls (261.98,96.96) and (260.43,96.35) .. (258.27,97.29) .. controls (256.11,98.23) and (254.56,97.62) .. (253.62,95.46) .. controls (252.68,93.3) and (251.13,92.69) .. (248.97,93.63) .. controls (246.81,94.58) and (245.26,93.97) .. (244.31,91.81) .. controls (243.37,89.65) and (241.82,89.04) .. (239.66,89.98) -- (238.96,89.7) -- (238.96,89.7) ;
\draw  [dash pattern={on 0.84pt off 2.51pt}]  (238.96,89.7) -- (250.14,71.42) ;
\draw [shift={(250.14,71.42)}, rotate = 301.47] [color={rgb, 255:red, 0; green, 0; blue, 0 }  ][fill={rgb, 255:red, 0; green, 0; blue, 0 }  ][line width=0.75]      (0, 0) circle [x radius= 3.35, y radius= 3.35]   ;
\draw [shift={(238.96,89.7)}, rotate = 301.47] [color={rgb, 255:red, 0; green, 0; blue, 0 }  ][fill={rgb, 255:red, 0; green, 0; blue, 0 }  ][line width=0.75]      (0, 0) circle [x radius= 3.35, y radius= 3.35]   ;
\draw  [dash pattern={on 0.84pt off 2.51pt}]  (286.17,89.26) -- (274.34,71.39) ;
\draw [shift={(274.34,71.39)}, rotate = 236.5] [color={rgb, 255:red, 0; green, 0; blue, 0 }  ][fill={rgb, 255:red, 0; green, 0; blue, 0 }  ][line width=0.75]      (0, 0) circle [x radius= 3.35, y radius= 3.35]   ;
\draw [shift={(286.17,89.26)}, rotate = 236.5] [color={rgb, 255:red, 0; green, 0; blue, 0 }  ][fill={rgb, 255:red, 0; green, 0; blue, 0 }  ][line width=0.75]      (0, 0) circle [x radius= 3.35, y radius= 3.35]   ;
\draw    (286.17,89.26) .. controls (283.93,90.01) and (282.44,89.26) .. (281.69,87.03) .. controls (280.95,84.79) and (279.46,84.04) .. (277.22,84.79) .. controls (274.99,85.54) and (273.5,84.79) .. (272.75,82.56) .. controls (272,80.33) and (270.5,79.58) .. (268.27,80.33) .. controls (266.03,81.08) and (264.54,80.33) .. (263.8,78.09) .. controls (263.05,75.86) and (261.56,75.11) .. (259.33,75.86) .. controls (257.1,76.61) and (255.6,75.86) .. (254.85,73.63) -- (250.44,71.42) -- (250.44,71.42) ;
\draw    (398.19,107.27) -- (398.17,88.26) ;
\draw [shift={(398.17,88.26)}, rotate = 269.92] [color={rgb, 255:red, 0; green, 0; blue, 0 }  ][fill={rgb, 255:red, 0; green, 0; blue, 0 }  ][line width=0.75]      (0, 0) circle [x radius= 3.35, y radius= 3.35]   ;
\draw [shift={(398.19,107.27)}, rotate = 269.92] [color={rgb, 255:red, 0; green, 0; blue, 0 }  ][fill={rgb, 255:red, 0; green, 0; blue, 0 }  ][line width=0.75]      (0, 0) circle [x radius= 3.35, y radius= 3.35]   ;
\draw    (350.98,107.33) -- (350.96,88.7) ;
\draw [shift={(350.96,88.7)}, rotate = 269.92] [color={rgb, 255:red, 0; green, 0; blue, 0 }  ][fill={rgb, 255:red, 0; green, 0; blue, 0 }  ][line width=0.75]      (0, 0) circle [x radius= 3.35, y radius= 3.35]   ;
\draw [shift={(350.98,107.33)}, rotate = 269.92] [color={rgb, 255:red, 0; green, 0; blue, 0 }  ][fill={rgb, 255:red, 0; green, 0; blue, 0 }  ][line width=0.75]      (0, 0) circle [x radius= 3.35, y radius= 3.35]   ;
\draw  [dash pattern={on 0.84pt off 2.51pt}]  (374.01,118.48) -- (398.19,107.27) ;
\draw [shift={(398.19,107.27)}, rotate = 335.12] [color={rgb, 255:red, 0; green, 0; blue, 0 }  ][fill={rgb, 255:red, 0; green, 0; blue, 0 }  ][line width=0.75]      (0, 0) circle [x radius= 3.35, y radius= 3.35]   ;
\draw [shift={(374.01,118.48)}, rotate = 335.12] [color={rgb, 255:red, 0; green, 0; blue, 0 }  ][fill={rgb, 255:red, 0; green, 0; blue, 0 }  ][line width=0.75]      (0, 0) circle [x radius= 3.35, y radius= 3.35]   ;
\draw  [dash pattern={on 0.84pt off 2.51pt}]  (374.01,118.48) -- (350.98,107.33) ;
\draw [shift={(350.98,107.33)}, rotate = 205.83] [color={rgb, 255:red, 0; green, 0; blue, 0 }  ][fill={rgb, 255:red, 0; green, 0; blue, 0 }  ][line width=0.75]      (0, 0) circle [x radius= 3.35, y radius= 3.35]   ;
\draw [shift={(374.01,118.48)}, rotate = 205.83] [color={rgb, 255:red, 0; green, 0; blue, 0 }  ][fill={rgb, 255:red, 0; green, 0; blue, 0 }  ][line width=0.75]      (0, 0) circle [x radius= 3.35, y radius= 3.35]   ;
\draw    (386.34,70.39) -- (362.14,70.42) ;
\draw [shift={(362.14,70.42)}, rotate = 179.92] [color={rgb, 255:red, 0; green, 0; blue, 0 }  ][fill={rgb, 255:red, 0; green, 0; blue, 0 }  ][line width=0.75]      (0, 0) circle [x radius= 3.35, y radius= 3.35]   ;
\draw [shift={(386.34,70.39)}, rotate = 179.92] [color={rgb, 255:red, 0; green, 0; blue, 0 }  ][fill={rgb, 255:red, 0; green, 0; blue, 0 }  ][line width=0.75]      (0, 0) circle [x radius= 3.35, y radius= 3.35]   ;
\draw    (398.19,107.27) .. controls (396.03,108.21) and (394.48,107.6) .. (393.54,105.44) .. controls (392.6,103.28) and (391.05,102.67) .. (388.89,103.61) .. controls (386.73,104.55) and (385.18,103.94) .. (384.23,101.78) .. controls (383.29,99.62) and (381.74,99.01) .. (379.58,99.95) .. controls (377.42,100.89) and (375.87,100.28) .. (374.93,98.12) .. controls (373.98,95.96) and (372.43,95.35) .. (370.27,96.29) .. controls (368.11,97.23) and (366.56,96.62) .. (365.62,94.46) .. controls (364.68,92.3) and (363.13,91.69) .. (360.97,92.63) .. controls (358.81,93.58) and (357.26,92.97) .. (356.31,90.81) .. controls (355.37,88.65) and (353.82,88.04) .. (351.66,88.98) -- (350.96,88.7) -- (350.96,88.7) ;
\draw  [dash pattern={on 0.84pt off 2.51pt}]  (350.96,88.7) -- (362.14,70.42) ;
\draw [shift={(362.14,70.42)}, rotate = 301.47] [color={rgb, 255:red, 0; green, 0; blue, 0 }  ][fill={rgb, 255:red, 0; green, 0; blue, 0 }  ][line width=0.75]      (0, 0) circle [x radius= 3.35, y radius= 3.35]   ;
\draw [shift={(350.96,88.7)}, rotate = 301.47] [color={rgb, 255:red, 0; green, 0; blue, 0 }  ][fill={rgb, 255:red, 0; green, 0; blue, 0 }  ][line width=0.75]      (0, 0) circle [x radius= 3.35, y radius= 3.35]   ;
\draw  [dash pattern={on 0.84pt off 2.51pt}]  (398.17,88.26) -- (386.34,70.39) ;
\draw [shift={(386.34,70.39)}, rotate = 236.5] [color={rgb, 255:red, 0; green, 0; blue, 0 }  ][fill={rgb, 255:red, 0; green, 0; blue, 0 }  ][line width=0.75]      (0, 0) circle [x radius= 3.35, y radius= 3.35]   ;
\draw [shift={(398.17,88.26)}, rotate = 236.5] [color={rgb, 255:red, 0; green, 0; blue, 0 }  ][fill={rgb, 255:red, 0; green, 0; blue, 0 }  ][line width=0.75]      (0, 0) circle [x radius= 3.35, y radius= 3.35]   ;
\draw    (398.17,88.26) .. controls (395.93,89.01) and (394.44,88.26) .. (393.69,86.03) .. controls (392.95,83.79) and (391.46,83.04) .. (389.22,83.79) .. controls (386.99,84.54) and (385.5,83.79) .. (384.75,81.56) .. controls (384,79.33) and (382.5,78.58) .. (380.27,79.33) .. controls (378.03,80.08) and (376.54,79.33) .. (375.8,77.09) .. controls (375.05,74.86) and (373.56,74.11) .. (371.33,74.86) .. controls (369.1,75.61) and (367.6,74.86) .. (366.85,72.63) -- (362.44,70.42) -- (362.44,70.42) ;
\draw  [dash pattern={on 0.84pt off 2.51pt}]  (509.19,107.27) -- (509.17,88.26) ;
\draw [shift={(509.17,88.26)}, rotate = 269.92] [color={rgb, 255:red, 0; green, 0; blue, 0 }  ][fill={rgb, 255:red, 0; green, 0; blue, 0 }  ][line width=0.75]      (0, 0) circle [x radius= 3.35, y radius= 3.35]   ;
\draw [shift={(509.19,107.27)}, rotate = 269.92] [color={rgb, 255:red, 0; green, 0; blue, 0 }  ][fill={rgb, 255:red, 0; green, 0; blue, 0 }  ][line width=0.75]      (0, 0) circle [x radius= 3.35, y radius= 3.35]   ;
\draw  [dash pattern={on 0.84pt off 2.51pt}]  (461.98,107.33) -- (461.96,88.7) ;
\draw [shift={(461.96,88.7)}, rotate = 269.92] [color={rgb, 255:red, 0; green, 0; blue, 0 }  ][fill={rgb, 255:red, 0; green, 0; blue, 0 }  ][line width=0.75]      (0, 0) circle [x radius= 3.35, y radius= 3.35]   ;
\draw [shift={(461.98,107.33)}, rotate = 269.92] [color={rgb, 255:red, 0; green, 0; blue, 0 }  ][fill={rgb, 255:red, 0; green, 0; blue, 0 }  ][line width=0.75]      (0, 0) circle [x radius= 3.35, y radius= 3.35]   ;
\draw    (485.01,118.48) -- (509.19,107.27) ;
\draw [shift={(509.19,107.27)}, rotate = 335.12] [color={rgb, 255:red, 0; green, 0; blue, 0 }  ][fill={rgb, 255:red, 0; green, 0; blue, 0 }  ][line width=0.75]      (0, 0) circle [x radius= 3.35, y radius= 3.35]   ;
\draw [shift={(485.01,118.48)}, rotate = 335.12] [color={rgb, 255:red, 0; green, 0; blue, 0 }  ][fill={rgb, 255:red, 0; green, 0; blue, 0 }  ][line width=0.75]      (0, 0) circle [x radius= 3.35, y radius= 3.35]   ;
\draw    (485.01,118.48) -- (461.98,107.33) ;
\draw [shift={(461.98,107.33)}, rotate = 205.83] [color={rgb, 255:red, 0; green, 0; blue, 0 }  ][fill={rgb, 255:red, 0; green, 0; blue, 0 }  ][line width=0.75]      (0, 0) circle [x radius= 3.35, y radius= 3.35]   ;
\draw [shift={(485.01,118.48)}, rotate = 205.83] [color={rgb, 255:red, 0; green, 0; blue, 0 }  ][fill={rgb, 255:red, 0; green, 0; blue, 0 }  ][line width=0.75]      (0, 0) circle [x radius= 3.35, y radius= 3.35]   ;
\draw  [dash pattern={on 0.84pt off 2.51pt}]  (497.34,70.39) -- (473.14,70.42) ;
\draw [shift={(473.14,70.42)}, rotate = 179.92] [color={rgb, 255:red, 0; green, 0; blue, 0 }  ][fill={rgb, 255:red, 0; green, 0; blue, 0 }  ][line width=0.75]      (0, 0) circle [x radius= 3.35, y radius= 3.35]   ;
\draw [shift={(497.34,70.39)}, rotate = 179.92] [color={rgb, 255:red, 0; green, 0; blue, 0 }  ][fill={rgb, 255:red, 0; green, 0; blue, 0 }  ][line width=0.75]      (0, 0) circle [x radius= 3.35, y radius= 3.35]   ;
\draw    (509.19,107.27) .. controls (507.03,108.21) and (505.48,107.6) .. (504.54,105.44) .. controls (503.6,103.28) and (502.05,102.67) .. (499.89,103.61) .. controls (497.73,104.55) and (496.18,103.94) .. (495.23,101.78) .. controls (494.29,99.62) and (492.74,99.01) .. (490.58,99.95) .. controls (488.42,100.89) and (486.87,100.28) .. (485.93,98.12) .. controls (484.98,95.96) and (483.43,95.35) .. (481.27,96.29) .. controls (479.11,97.23) and (477.56,96.62) .. (476.62,94.46) .. controls (475.68,92.3) and (474.13,91.69) .. (471.97,92.63) .. controls (469.81,93.58) and (468.26,92.97) .. (467.31,90.81) .. controls (466.37,88.65) and (464.82,88.04) .. (462.66,88.98) -- (461.96,88.7) -- (461.96,88.7) ;
\draw    (461.96,88.7) -- (473.14,70.42) ;
\draw [shift={(473.14,70.42)}, rotate = 301.47] [color={rgb, 255:red, 0; green, 0; blue, 0 }  ][fill={rgb, 255:red, 0; green, 0; blue, 0 }  ][line width=0.75]      (0, 0) circle [x radius= 3.35, y radius= 3.35]   ;
\draw [shift={(461.96,88.7)}, rotate = 301.47] [color={rgb, 255:red, 0; green, 0; blue, 0 }  ][fill={rgb, 255:red, 0; green, 0; blue, 0 }  ][line width=0.75]      (0, 0) circle [x radius= 3.35, y radius= 3.35]   ;
\draw    (509.17,88.26) -- (497.34,70.39) ;
\draw [shift={(497.34,70.39)}, rotate = 236.5] [color={rgb, 255:red, 0; green, 0; blue, 0 }  ][fill={rgb, 255:red, 0; green, 0; blue, 0 }  ][line width=0.75]      (0, 0) circle [x radius= 3.35, y radius= 3.35]   ;
\draw [shift={(509.17,88.26)}, rotate = 236.5] [color={rgb, 255:red, 0; green, 0; blue, 0 }  ][fill={rgb, 255:red, 0; green, 0; blue, 0 }  ][line width=0.75]      (0, 0) circle [x radius= 3.35, y radius= 3.35]   ;
\draw    (509.17,88.26) .. controls (506.93,89.01) and (505.44,88.26) .. (504.69,86.03) .. controls (503.95,83.79) and (502.46,83.04) .. (500.22,83.79) .. controls (497.99,84.54) and (496.5,83.79) .. (495.75,81.56) .. controls (495,79.33) and (493.5,78.58) .. (491.27,79.33) .. controls (489.03,80.08) and (487.54,79.33) .. (486.8,77.09) .. controls (486.05,74.86) and (484.56,74.11) .. (482.33,74.86) .. controls (480.1,75.61) and (478.6,74.86) .. (477.85,72.63) -- (473.44,70.42) -- (473.44,70.42) ;
\draw    (162.34,71.39) .. controls (198,75) and (218,87) .. (238.98,108.33) ;
\draw    (274.34,71.39) .. controls (310,75) and (330,87) .. (350.98,108.33) ;
\draw    (386.34,70.39) .. controls (422,74) and (442,86) .. (462.98,107.33) ;
\draw    (126.98,108.33) .. controls (125.31,110) and (123.65,110.01) .. (121.98,108.34) .. controls (120.31,106.68) and (118.64,106.69) .. (116.98,108.36) .. controls (115.31,110.03) and (113.65,110.03) .. (111.98,108.37) .. controls (110.31,106.71) and (108.65,106.71) .. (106.98,108.38) .. controls (105.31,110.05) and (103.65,110.05) .. (101.98,108.39) .. controls (100.31,106.73) and (98.65,106.73) .. (96.98,108.4) .. controls (95.32,110.07) and (93.65,110.08) .. (91.98,108.42) .. controls (90.31,106.76) and (88.65,106.76) .. (86.98,108.43) .. controls (85.31,110.1) and (83.65,110.1) .. (81.98,108.44) .. controls (80.31,106.78) and (78.65,106.78) .. (76.98,108.45) .. controls (75.31,110.12) and (73.65,110.12) .. (71.98,108.46) .. controls (70.31,106.8) and (68.65,106.8) .. (66.98,108.47) .. controls (65.32,110.14) and (63.65,110.15) .. (61.98,108.49) .. controls (60.31,106.83) and (58.65,106.83) .. (56.98,108.5) -- (56,108.5) -- (56,108.5) ;
\draw    (568.32,70.22) .. controls (566.65,71.89) and (564.99,71.9) .. (563.32,70.23) .. controls (561.65,68.57) and (559.98,68.58) .. (558.32,70.25) .. controls (556.65,71.92) and (554.99,71.92) .. (553.32,70.26) .. controls (551.65,68.6) and (549.99,68.6) .. (548.32,70.27) .. controls (546.65,71.94) and (544.99,71.94) .. (543.32,70.28) .. controls (541.65,68.62) and (539.99,68.62) .. (538.32,70.29) .. controls (536.66,71.96) and (534.99,71.97) .. (533.32,70.31) .. controls (531.65,68.65) and (529.99,68.65) .. (528.32,70.32) .. controls (526.65,71.99) and (524.99,71.99) .. (523.32,70.33) .. controls (521.65,68.67) and (519.99,68.67) .. (518.32,70.34) .. controls (516.65,72.01) and (514.99,72.01) .. (513.32,70.35) .. controls (511.65,68.69) and (509.99,68.69) .. (508.32,70.36) .. controls (506.66,72.03) and (504.99,72.04) .. (503.32,70.38) .. controls (501.65,68.72) and (499.99,68.72) .. (498.32,70.39) -- (497.34,70.39) -- (497.34,70.39) ;

\draw (144,124.65) node [anchor=north west][inner sep=0.75pt]  [rotate=-359.39]  {$s$};
\draw (109,110.13) node [anchor=north west][inner sep=0.75pt]  [rotate=-0.98]  {$x$};
\draw (107,80.13) node [anchor=north west][inner sep=0.75pt]  [rotate=-0.98]  {$p_1$};
\draw (117.92,53.82) node [anchor=north west][inner sep=0.75pt]  [rotate=-359.14]  {$p_2$};
\draw (166.92,53.82) node [anchor=north west][inner sep=0.75pt]  [rotate=-359.14]  {$y$};
\draw (179,110.13) node [anchor=north west][inner sep=0.75pt]  [rotate=-0.98]  {$q_1$};
\draw (179,80.13) node [anchor=north west][inner sep=0.75pt]  [rotate=-0.98]  {$q_2$};
\draw (250.58,124.65) node [anchor=north west][inner sep=0.75pt]  [rotate=-359.39]  {};
\draw (207.64,107.13) node [anchor=north west][inner sep=0.75pt]  [rotate=-0.98]  {};
\draw (278.92,51.82) node [anchor=north west][inner sep=0.75pt]  [rotate=-359.14]  {};
\draw (362.58,123.65) node [anchor=north west][inner sep=0.75pt]  [rotate=-359.39]  {};
\draw (319.64,107.13) node [anchor=north west][inner sep=0.75pt]  [rotate=-0.98]  {};
\draw (390.92,51.82) node [anchor=north west][inner sep=0.75pt]  [rotate=-359.14]  {};
\draw (473.58,123.65) node [anchor=north west][inner sep=0.75pt]  [rotate=-359.39]  {};
\draw (431.64,107.13) node [anchor=north west][inner sep=0.75pt]  [rotate=-0.98]  {};
\draw (498.92,51.82) node [anchor=north west][inner sep=0.75pt]  [rotate=-359.14] {};
\end{tikzpicture}
    \caption{A chain of beehive absorbers of height $\ell=2$. Within each beehive, a path from the $x$-vertex (on the bottom left) to the $y$-vertex (on the top right) is drawn. The vertices of the leftmost beehive are labeled in accordance with \cref{def:beehive}.}
    \label{fig:beehive-line}
\end{figure}

\begin{lemma}[Beehives are absorbers]\label{lem:beehive-useful}
    Every $(s,x,y)$-beehive is an $(x,y;\{s\})$-absorber.
\end{lemma}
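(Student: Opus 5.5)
We take $A:=V(O)\setminus\{s\}$, so that $x,y\in A$ and $s\notin A$. The proof is then a direct verification: we exhibit, for each of the two subsets $S'\subseteq\{s\}$, a spanning path from $x$ to $y$ inside $O[S'\sqcup A]\subseteq G[S'\sqcup A]$. Both paths are ``zigzag'' traversals of the ladder formed by $P$, $Q$ and the rungs $U_1,\ldots,U_\ell$. They differ only in the side on which the zigzag starts. Throughout we use that the $U_i$ are internally disjoint from each other and from $P\cup Q$, so every vertex of $O$ is visited exactly once.

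\emph{Case $S'=\emptyset$.} We start at $x$ and step along $P$ to $p_1$. We then traverse $U_1$ from $p_1$ to $q_1$ and step along $Q$ to $q_2$. Next we traverse $U_2$ back from $q_2$ to $p_2$, step along $P$ to $p_3$, and so on, alternately using a rung and a single edge of $P$ or $Q$. After traversing $U_\ell$ we are at $q_\ell$ (if $\ell$ is odd) or at $p_\ell$ (if $\ell$ is even). Both $q_\ell y$ and $p_\ell y$ are edges (of $Q$ and of $P$, respectively), so we finish at $y$. This path visits every vertex of $P$, every vertex of $Q$ except $s$, and every interior vertex of every $U_i$. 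Hence it spans exactly $A$.

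\emph{Case $S'=\{s\}$.} We start with the edge $xs$ and step along $Q$ from $s$ to $q_1$. We then traverse $U_1$ from $q_1$ to $p_1$, step along $P$ to $p_2$, traverse $U_2$ to $q_2$, step along $Q$ to $q_3$, and continue alternating as before. We end at $p_\ell$ or $q_\ell$ and then take the final edge to $y$. Now $s$ and all other vertices of $O$ are covered, so the path spans $A\sqcup\{s\}$.

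The degenerate case $\ell=0$ is consistent with this: the paths are simply $x\dd y$ and $x\dd s\dd y$. The only point needing care is the bookkeeping in the alternation, namely that consecutive rungs $U_i$ and $U_{i+1}$ are joined by exactly the edge $p_ip_{i+1}$ or $q_iq_{i+1}$, depending on the parity of $i$, and that each of these edges is used at most once. This follows by a straightforward induction on $i$, so there is no real obstacle.
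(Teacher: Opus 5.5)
Your proposal is correct and is essentially the paper's proof: the paper exhibits the same two zigzag paths through the ladder, $x\to p_1\xrightarrow{U_1}q_1\to q_2\xrightarrow{U_2}p_2\to\cdots\to y$ for $O[V(O)\setminus\{s\}]$ and $x\to s\to q_1\xrightarrow{U_1}p_1\to p_2\xrightarrow{U_2}q_2\to\cdots\to y$ for $O$. Your explicit choice $A=V(O)\setminus\{s\}$ and your remark on the degenerate case $\ell=0$ are harmless additions to the same argument.
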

\begin{proof}
    Let $O$ be an $(s,x,y)$-beehive; we must show that both $O$ and $O[V(O)\setminus\{s\}]$ contain spanning paths with endpoints $x$ and $y$.
    These may both be explicitly constructed.
    For $O$, take the path
    \[x\to s\to q_1\xrightarrow{U_1}p_1\to p_2\xrightarrow{U_2}q_2\to\cdots\to (p_\ell\text{ or }q_\ell)\xrightarrow{U_\ell}(q_\ell\text{ or }p_\ell)\to y,\]
    while for $O[V(O)\setminus\{s\}]$, take the path
    \[x\to p_1\xrightarrow{U_1}q_1\to q_2\xrightarrow{U_2}p_2\to\cdots\to (q_\ell\text{ or }p_\ell)\xrightarrow{U_\ell}(p_\ell\text{ or }q_\ell)\to y.\qedhere\]
\end{proof}

Recall that we wish to find an $(x_0,y_0;S)$-absorber in $R$ for some $x_0,y_0\not\in R\cup S$. We begin by finding vertex-disjoint beehives for each $s\in S$.

\begin{lemma}[Beehive construction]\label{lem:beehive-constr}
    Let $R\subset V(C_n)$ be $\eps$-distributed and let $S\subset V(C_n)\setminus R$ be $\eps'$-locally sparse.
    Suppose that $\alpha^{-1}k^{-1/5}<\eps'<10^{-14}\alpha^5\eps$.

    Then there exist, for each $s \in S$, some vertices $x^{(s)},y^{(s)}\in R$ and some $(s,x^{(s)},y^{(s)})$-beehive~$O^{(s)}$ with vertex set in $R\sqcup\{s\}$ and height at most $2+6\alpha^{-1}$. 
    Moreover, we can ensure that the vertex sets of $O^{(s)}$ are disjoint.    
\end{lemma}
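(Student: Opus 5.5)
Fix $s\in S$. I would build the beehive from $R$ in two stages: first its ``skeleton'' (the vertices $x^{(s)},y^{(s)}$, the two paths $P,Q$, and the edge $sx$), chosen near $s$, and then the connecting paths $U_i$, obtained from \cref{cor:path-connect}. The plan is to fix a short \emph{odd} closed structure near $s$ using \cref{lem:nbhd-inter}, since a beehive is essentially built from odd cycles. The skeleton condition is that $p_i$ and $q_i$ lie within $O(k)$ of each other, and that consecutive vertices of $P$ and of $Q$ are adjacent in $G$.

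First I split $R$ with \cref{lem:distributed-partition} into $R_1$, an $\eps_1$-distributed set with $\eps_1$ a suitable power of $\alpha$ times $\eps$ (for instance $\eps_1=\eps/2$), and $R_2$, $\eps_2$-distributed. The skeletons will come from $R_1$ and the paths $U_i$ from $R_2$.

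For the skeletons I would process the vertices $s\in S$ greedily. Choose $x\in N_G(s)\cap R_1$. Then build $P=x\dd p_1\dd\cdots\dd p_\ell\dd y$ and $Q=s\dd q_1\dd\cdots\dd q_\ell\dd y$ together, walking both of them in the same direction along the cycle in steps of about $\alpha k/2$. Each new vertex is taken from common neighbourhoods: $p_{i+1}\in N_G(p_i)\cap R_1$ and $q_{i+1}\in N_G(q_i)\cap R_1$, chosen so that $\dist_C(p_i,q_i)\le\alpha k$ at every step. At the end I need $y$ to be a common neighbour of $p_\ell$ and $q_\ell$; \cref{lem:nbhd-inter} together with the distributedness of $R_1$ guarantees $\Omega(\alpha\eps_1k)$ choices for it.

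The parity constraint (the paths must have equal length, and the edge $sx$ closes an odd cycle) is exactly where the Dirac condition is used. Taking $\ell\le 2+6\alpha^{-1}$ steps is enough, because the two walks only need to drift apart by about $k$ and then meet again. In fact I expect $\ell=1$ or $2$ to often suffice: $p_1$, $q_1$ and $y$ can be taken as common neighbours within an interval of length $O(\alpha k)$. The height bound $2+6\alpha^{-1}$ leaves room for a more pedestrian construction.

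Greediness works because the already-used vertices near $s$ number at most $O(\alpha^{-1})$ per element of $S$ within distance $O(k)$. Since $S$ is $\eps'$-locally sparse, that is at most $O(\alpha^{-1}\eps' k)$ vertices, which is much smaller than $\alpha\eps_1 k/2$ because $\eps'\ll\alpha^5\eps$.

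Next come the paths $U_i$. The pairs $(p_i,q_i)$ over all $s$ and $i$ form a multiset $T\subset R_1$. It is $O(\alpha^{-1}\eps')$-locally sparse, and each pair has $\dist_C(p_i,q_i)\le k$. I apply \cref{cor:path-connect} with $L=1$, the distributed set $R_2$, and $O(\alpha^{-1}\eps')$ in place of $\eps'$. This needs $\alpha^{-1}\eps'\ll10^{-5}\alpha^2\eps_2$, which holds by the hypothesis $\eps'<10^{-14}\alpha^5\eps$. The corollary gives internally disjoint paths with interiors in $R_2$, and these serve as the $U_i$, automatically disjoint from the skeletons.

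The main obstacle I foresee is making the skeleton construction rigorous with disjointness across different $s$. The issue is that \cref{lem:nbhd-inter} controls $N_G(x)\cap N_G(y)$ but not single neighbourhoods intersected with the previously used vertices. I would handle this as in \cref{lem:pair-connect}: count the used vertices via local sparseness of $S$, and always pick each new vertex from a common neighbourhood of two already-fixed, nearby vertices, where the distributed property of $R_1$ gives a guaranteed lower bound.
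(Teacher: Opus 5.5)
There is a genuine gap, and it sits in the skeleton, which is the whole content of this lemma. Your walk cannot work as described, for three reasons:
\begin{enumerate}
\item The invariant $\dist_C(p_i,q_i)\le\alpha k$ already fails at $i=0$, because $x\in N_G(s)$ may be at distance close to $k$ from $s$.
\item Walking $P$ and $Q$ ``in the same direction'' does nothing to close that gap.
\item Single-neighbourhood steps $p_{i+1}\in N_G(p_i)\cap R_1$ cannot be steered to a prescribed location, since $N_G(p_i)$ may miss any $(1-\alpha)k$ positions.
\end{enumerate}
Concretely, for $\alpha\le 1/3$ and $n>2k$, the subgraph of $C_n^k$ consisting of all edges of length in $(\alpha k,k]$ is $\alpha$-Dirac. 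In it, no edge $sx$ has $\dist_C(s,x)\le\alpha k$, and no step of length about $\alpha k/2$ exists at all. Moreover, \cref{lem:nbhd-inter} gives no control over \emph{where} common neighbours lie. So your later claim that $\dist_C(p_i,q_i)\le k$, needed to apply \cref{cor:path-connect} with $L=1$, is also unsupported.

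The missing idea is the one your last paragraph gestures at but never sets up. The only adjacency you can control is a common neighbour, in a distributed set, of two vertices within $\alpha k$ of each other. A controllable path must therefore alternate between freely chosen anchor vertices, placed at small spacing along the arc, and connectors taken from common neighbourhoods. Such a path automatically has even length, and this is what produces the odd cycle.

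The paper uses exactly this mechanism, packaged as \cref{cor:path-connect}:
\begin{enumerate}
\item It matches each $s$ to some $x^{(s)}\in N_G(s)\cap R_1$ by Hall's theorem.
\item It applies \cref{cor:path-connect} with $L=1$ to the pairs $(s,x^{(s)})$. This gives disjoint $s$--$x^{(s)}$ paths through $R_2$ of even length $2\ell^{(s)}+2\le 6+12\alpha^{-1}$.
\item Together with the edge $sx^{(s)}$, each such path is an odd cycle. Its middle vertex is $y^{(s)}$, and the two vertices at equal distance from $y^{(s)}$ are paired as $p_j^{(s)}\leftrightarrow q_j^{(s)}$. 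In effect, your two walks move towards each other rather than in the same direction.
\item The paired vertices are only known to be within graph distance $6+12\alpha^{-1}$, hence $C$-distance $(6+12\alpha^{-1})k$. So the second application of \cref{cor:path-connect}, into $R_3$, uses $L=6+12\alpha^{-1}$. Combined with the first application, this is what forces the factor $\alpha^5$ in the hypothesis.
\end{enumerate}
Your greedy disjointness counting and your use of \cref{cor:path-connect} for the $U_i$ would be fine once the skeleton is built this way, with $L$ of order $\alpha^{-1}$ rather than $1$.
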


We partition the distributed set $R$ into three distributed sets $R_1\sqcup R_2\sqcup R_3$. In the notation of \cref{def:beehive}, we will select the $x$-vertices of the beehives from $R_1$, the $\{p_i,q_i,y\}$-vertices from $R_2$, and the paths $U_i$ (as well as paths connecting the $x$- and $y$-vertices of adjacent beehives) from $R_3$. Our main tool will be \cref{cor:path-connect}.

\begin{proof}[Proof of \cref{lem:beehive-constr}]
    Select $\eps_1:=6\eps'$ and $\eps_2:=10^6\alpha^{-2}\eps'$, and set $\eps_3:=\eps-\eps_1-\eps_2$. 
    By \cref{lem:distributed-partition}, we can partition $R$ into three sets $R_1\sqcup R_2\sqcup R_3$ such that each $R_j$ is $\eps_j$-distributed.

    Since $R_1$ is $\eps_1$-distributed, we have
    \[\abs{R_1\cap N_G(s)}\geq \eps_1\abs{N_G(s)}-k^{4/5}\geq \eps_1k-k^{4/5}>5\eps'k\]
    for each $s\in S$. On the other hand, since $S$ is $\eps'$-locally sparse, each vertex of $R_1$ is adjacent to at most $4\eps'k+k^{4/5}<5\eps'k$ elements of $S$. So, by Hall's theorem, we can find an injection $\pi\colon S\to R_1$ such that $s\pi(s)\in E(G)$ for each $s\in S$. Write $x^{(s)}:=\pi(s)$.

    The set $S\sqcup\pi(S)$ is contained in $S\sqcup R_1$ and is thus $(\eps'+\eps_1)$-locally sparse. 
    Moreover, we have
    \[\alpha^{-1}k^{-1/5}<\eps'+\eps_1=7\eps'<10^{-5}\alpha^2\eps_2.\]
    We can thus apply \cref{cor:path-connect} with $L=1$ and the set $R_2$ to find a collection of disjoint paths $P^{(s)}$ of length $2\ell^{(s)}+2\leq 6+12\alpha^{-1}$ with internal vertices in $R_2$ which connect $s$ to $x^{(s)}$ for each $s\in S$. 
    For each $s$, enumerate the so-found path as
    \[x^{(s)},p_1^{(s)},\ldots,p_{\ell^{(s)}}^{(s)},y^{(s)},q_{\ell^{(s)}}^{(s)},\ldots,q_1^{(s)},s.\]
    
    Let
    \[T=\bigcup_{s\in S}\left\{p_1^{(s)},\ldots,p_{\ell^{(s)}}^{(s)},q_1^{(s)},\ldots,q_{\ell^{(s)}}^{(s)}\right\}\]
    with the pairing $p_j^{(s)}\leftrightarrow q_j^{(s)}$. 
    The set $T$ is contained in $S\sqcup R_1\sqcup R_2$, and is thus $(\eps'+\eps_1+\eps_2)$-locally sparse.
    Moreover, each pair $v\leftrightarrow w$ paired in $T$ is at graph distance at most $6+12\alpha^{-1}$ and thus at $C$-distance at most $(6+12\alpha^{-1})k$. 
    We conclude from \cref{cor:path-connect} applied with $L=6+12\alpha^{-1}$ and the set $R_3$ that we can find a collection of disjoint paths with internal vertices in $R_3$ connecting $p_j^{(s)}$ to $q_j^{(s)}$ for each $s$ and each $j$.

    We have now found, for each $s$, a $(s,x^{(s)},y^{(s)})$-beehive $O^{(s)}$ with vertex set (besides $s$) in~$R=R_1\sqcup R_2\sqcup R_3$. The proof is complete.
\end{proof}

\begin{proof}[Proof of \cref{lem:absorber-constr}]
    Recall that $R$ is an $\eps$-distributed set.
    Let $\eps_1=10^{14}\alpha^{-5}\eps'$ and $\eps_2=\eps-\eps_1$. 
    By \cref{lem:distributed-partition}, we can partition $R$ into two sets $R_1\sqcup R_2$ such that each $R_j$ is $\eps_j$-distributed. Apply \cref{lem:beehive-constr} to $S$ and $R_1$ to find, for each $s\in S$, vertices $x^{(s)},y^{(s)}\in R_1$ and vertex-disjoint beehives $O^{(s)}$ of height at most $\ell:=2+6\alpha^{-1}$ with vertex sets contained in $S\sqcup R_1$.
    By \cref{lem:beehive-useful}, it is enough to find some order $s_1,\ldots,s_{\abs{S}}$ of $S$ and to find vertex-disjoint paths connecting $y^{(s_i)}$ to $x^{(s_{i+1})}$ for each $1\leq i<\abs{S}$ with internal vertices lying in $R_2$.
    If we can do this, then we can form a path
    \[x^{(s_1)}\xrightarrow{O^{(s_1)}}y^{(s_1)}\to x^{(s_2)}\xrightarrow{O^{(s_2)}}y^{(s_2)}\to\cdots\to x^{(s_{\abs{S}})}\xrightarrow{O^{(s_{\abs{S}})}}y^{(s_{\abs{S}})}\]
    for each subset $S'\subset S$ (for each $s\in S$, the path chosen through $O^{(s)}$ depends on whether $s\in S'$) which witnesses the statement that $R$ contains an $(x^{(s_1)},y^{(s_{\abs{S}})};S)$-absorber.

    Order $s_1,\ldots,s_{\abs{S}}$ clockwise around $C_n$. Since $S$ is distributed, $S$ intersects each interval of length $k$. Therefore, for each $i$,
    \[\dist_C\big(y^{(s_i)},x^{(s_{i+1})}\big)\leq \dist_C\big(y^{(s_i)},s_i\big)+\dist_C(s_i,s_{i+1})+\dist_C\big(s_{i+1},x^{(s_{i+1})}\big)\leq k(\ell+3).\]
    Note also that the set of $x^{(s)}$ and $y^{(s)}$, as a subset of $S\sqcup R_1$, is $(\eps'+\eps_1)$-locally sparse. Since
    \[\alpha^{-1}k^{-1/5}<\eps'+\eps_1<10^{-7}\alpha^3\eps<10^{-5}(\ell+3)^{-1}\alpha^2\eps_2,\]
    we may thus apply \cref{cor:path-connect} to find our desired paths in $R_2$. 
\end{proof}

\section{Hall's condition}\label{sec:Hall}

We say that a graph $G$ \defn{satisfies Hall's condition} if $\abs{N(I)}\geq \abs{I}$ for every independent set~$I$ of~$G$. Observe that this is equivalent to $\abs{N(S)}\geq\abs{S}$ for not necessarily independent sets $S$ (here, $N(S)$ is defined to be the set of all vertices connected by an edge to $S$; in particular,~$N(S)$ and $S$ are not necessarily disjoint). Note that neither of these conditions is sufficient to conclude the existence of a perfect matching in a general (non-bipartite) graph~$G$. However, to every graph~$G$, we can associate a bipartite graph $B(G)$ called the \defn{bipartite double cover}, whose vertex set consists of two disjoint copies of $V(G)$, and in which two vertices across the disjoint copies are connected if and only if the corresponding vertices are connected in~$G$. If~$G$ satisfies Hall's condition, then so does $B(G)$, and hence $B(G)$ contains a perfect matching by Hall's theorem. A perfect matching in $B(G)$ corresponds to a collection of (potentially odd) cycles and edges spanning $G$. This fact is a key ingredient that allows us to go from Hall's condition in general graphs to finding regular subgraphs as outlined earlier.

In this section, our goal is to prove the following, which is an essential ingredient in the proof of \cref{lem:path-decomp} we present in the next section. 

\begin{prop}\label{thm:hall}
    Let $G$ be a spanning subgraph of $C_n^k$ such that $\delta(G) \geq k + 1$. Then $G$ satisfies Hall's condition. 
\end{prop}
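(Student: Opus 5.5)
The plan is to argue by contradiction with a local counting (discharging) argument. Suppose some independent set $I$ of $G$ has $\abs{N(I)}<\abs{I}$, and let $J:=N(I)$. Since $I$ is independent, $J\cap I=\emptyset$, and every edge of $G$ incident to $I$ goes to $J$. First I would dispose of the case $n\le 2k+1$, where $C_n^k=K_n$. Here any $v\in I$ has $\deg(v)\ge k+1$ and all its neighbours lie outside $I$. Hence $\abs{I}\le n-(k+1)\le k<k+1\le\abs{N(v)}\le\abs{J}$, and we are done. So assume $n>2k+1$. For each vertex $w$ let $W(w)$ denote the $2k$ vertices at cyclic distance between $1$ and $k$ from $w$; then $N_G(w)\subseteq W(w)$.

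The basic local fact is the following. If $v\in I$, then $W(v)$ contains at least $k+1$ vertices of $J$ and hence at most $k-1$ vertices of $I$. So $I$ has density below $1/2$ in every window centred at one of its own points, while $J$ has density above $1/2$ there. I would then discharge. Each $v\in I$ starts with charge $1$ and sends $1/\deg(v)\le 1/(k+1)$ to each of its neighbours, all of which lie in $J$. The total charge moved is exactly $\abs{I}$. It then suffices to show that the charge received by $J$ is at most $\abs{J}$ on average. Pointwise, $u\in J$ receives at most $\abs{W(u)\cap I}/(k+1)$, so I would aim to show $\abs{W(u)\cap I}\le k+1$. Let $a,b$ be the extreme (anticlockwise-most and clockwise-most) points of $I$ inside $W(u)$. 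If $\dist_C(a,b)\le k$, all these points lie in $W(a)\cup\{a\}$, giving at most $k$ of them. Otherwise, I would combine the windows $W(a)$ and $W(b)$, which together cover the arc from $a$ to $b$. The $J$-vertices that $a$ and $b$ are forced to see must then largely fall inside this arc, and I would try to use them to bound the $I$-count.

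I expect this pointwise bound to be the main obstacle, since $a$'s non-$I$ neighbours may lie on the far side of $a$, outside $W(u)$. If it fails pointwise, my fallback is to average instead of bounding each $u$ separately. I would sweep around the cycle and split $V(C_n)$ into maximal arcs, charging each $I$-vertex to the $J$-vertices in a window around it. Within any arc of length about $k$ that starts at an $I$-vertex, the count $\#J-\#I$ is at least $2$ by the local fact. Summing these surpluses over a suitable covering of the cycle by such arcs should give $\abs{J}\ge\abs{I}$. One natural choice is arcs starting at consecutive $I$-vertices that are at least $k$ apart, with $I$-free gaps contributing only nonnegative amounts.

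Finally, the equivalence noted before the statement shows that it suffices to treat independent $I$, so the contradiction completes the proof.
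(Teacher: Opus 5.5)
You handle $n\le 2k+1$ correctly, but for $n>2k+1$ there is a genuine gap. Averaged over $J$, your discharging is just a restatement of $\abs{I}\le\abs{J}$, so it only helps with a pointwise bound, and that bound fails.

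\textbf{The pointwise bound is false.} Take $n$ large, $a=\floor{(2k+1)/3}$, $I=[-k,-k+a-1]\cup[k-a+1,k]$, and let $G$ be $C_n^k$ with all edges inside $I$ deleted. Each $v\in I$ has at most $3a-k-2\le k-1$ vertices of $I$ in $W(v)$, so $\delta(G)\ge k+1$. Yet $u=0\in J$ is adjacent to all $2a\approx 4k/3>k+1$ vertices of $I$. It receives total charge about $2(\frac14+\ln\frac43)\approx 1.07>1$.

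\textbf{The fallback rests on a false claim.} The local fact controls only the symmetric window $W(v)$, not a one-sided arc $[v,v+k]$, because $v$ may have almost all its neighbours on its left. For example, let $I$ contain a block $[v,v+m-1]$ with $m=\floor{k/2}$ and nothing else within distance $3k$. Join $v+j$ to all of $[v+j-k,v-1]$, to $w=v+m$, and to $[v+k+1,v+k+j]$, and keep all edges of $C_n^k$ not meeting $I$. Then $\delta(G)\ge k+1$, but $[v,v+k]$ contains $m$ vertices of $I$ and only one vertex, $w$, of $J$. Also, summing over arcs with ``$I$-free gaps contributing nonnegative amounts'' presupposes that the arcs tile the cycle exactly once. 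Greedily chosen start points need not do this.

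\textbf{The missing idea.} Locate the surplus to the \emph{left} of each arc, and handle wrap-around by multiplicity.
Pick $u_0\in I$, let $u_{i+1}$ be the first vertex of $I$ strictly after $u_i+k$, and stop at the first repetition. On the resulting cyclic part $Q$, the sets $[u,u+k]\cap I$ for $u\in Q$ cover $I$ exactly $t$ times for some $t\ge 1$.
Each $u\in Q$ is non-adjacent to the $\abs{[u,u+k]\cap I}-1$ vertices of $I$ to its right, and $\deg(u)\ge k+1$. Hence it has at least $\abs{[u,u+k]\cap I}$ neighbours in $[u-k,u-1]$, all in $J$. This gives at least $t\abs{I}$ pairs $(u,x)$ with $u\in Q$ and $x$ a left neighbour of $u$.
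Conversely, if $x$ is a left neighbour of $q_1<\dots<q_\ell$ in $Q$, then $q_\ell\in[q_i,q_i+k]$ for every $i$, so $\ell\le t$. Hence $\abs{J}\ge\abs{I}$.

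This is the paper's proof. Your greedy ``arcs starting at $I$-vertices at least $k$ apart'' is the right skeleton, but the argument does not go through without the one-sided degree count and the multiplicity-$t$ bookkeeping.
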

\begin{proof}
    We assume a cyclic order on the vertices of $G$ inherited from the cycle $C_n$. Let $S \subseteq V(G)$ be a fixed independent set of $G$. For every vertex $v$ of $S$, let $\pi(v)$ be the interval of $C_n^k$ comprising $v$ and the $k$ neighbors of $v$ in $C_n^k$ to the right of $v$. Now, we define the following sequence of vertices of $S$:
    \begin{itemize}
        \item Let $u_0$ be a vertex of $S$ chosen arbitrarily.
        \item For $i \geq 1$, let $u_{i+1}$ be the first vertex of $S$ which is strictly to the right of $\pi(u_i)$. 
        \item If $u_{i+1} = u_j$ for some $0 \leq j \leq i$, end the sequence. 
    \end{itemize}

Since $S$ contains a finite number of vertices, the sequence terminates at some $u_{i+1}$. Let $Q = \{u_j, u_{j+1}, \hdots, u_i\}$. Consider the subgraph $G[S]$ together with the cyclic order inherited from $G$. Since $u_{i+1} = u_j$, there is an integer $t\geq 1$ such that the multiset union $\bigcup_{u \in Q}(\pi(u) \cap S)$ consists of exactly $t$ copies of $S$. 
 
For $v \in V(G)$, let $N_L(v)$ denote the neighborhood of $v$ to the left of $v$ in the cyclic order. Let $N \coloneqq \bigcup_{u \in Q} N_L(u)$ and let $H$ be the bipartite graph with $V(H) = Q \sqcup N$ where $u\in Q$ and $v\in N$ are connected if and only if $v\in N_L(u)$.

\begin{claim}
    The number of edges in $H$ is at least $t\abs{S}$. 
\end{claim}

\begin{proofofclaim}
    By the definition of $t$, every vertex of $S$ appears $t$ times in the multiset union $\bigcup_{u \in Q} \pi(u)$.
    Let $u \in Q$. 
    Since $d(u) \geq k + 1$ and $u$ is anticomplete to $(\pi(u) \cap S)\setminus\{u\}$, it follows that $u$ has at least $\abs{\pi(u)\cap S}$ neighbors to its left. 
    We conclude that 
    \[\abs{E(H)} = \sum_{u\in Q}\abs{N_L(u)}\geq \sum_{u \in Q} \abs{\pi(u)\cap S}=t\abs{S}.\qedhere\]
\end{proofofclaim}

\begin{claim}
    Every vertex of $N$ has degree at most $t$ in $H$.
\end{claim}

\begin{proofofclaim}
    Let $x$ be an arbitrary vertex of $N$ of some degree $\ell$ in $H$. 
    Let $q_1,\ldots,q_\ell$ be the neighbors of $x$ in $H$, appearing in $C_n$ to the right of $x$ in this order. 
    We have $x \in \bigcap_{i=1}^\ell N_L(q_i)$. 
    Observe that the interval from $x$ to $q_\ell$ has length at most $k$.
    So, $q_\ell \in \pi(q_i)$ for every $1 \leq i \leq \ell$. Furthermore, $q_\ell \in Q \subseteq S$ and every vertex of $S$ appears in exactly $t$ intervals $\pi(u)$ for $u \in Q$.
    It follows that $\ell \leq t$. 
\end{proofofclaim}

Since there are at least $t\abs{S}$ edges in $H$ and every vertex of $N$ has at most $t$ neighbors in~$H$, it follows that $\abs{N}\geq\abs{S}$. 
In particular, since $N \subseteq N(S)$, we have $\abs{N(S)}\geq\abs{S}$. 
As $S$ was chosen arbitrarily, we conclude that $G$ satisfies Hall's condition. 
\end{proof}

\section{Path decomposition}\label{sec:path-decomp}

In this section, we will prove \cref{lem:path-decomp}. 

\subsection{Well-distributed matching}

We will make use of the following lemma that extracts a simple pseudorandomness condition from the output of a random matching algorithm. Such extensions of the nibble method go back to work of Alon and Yuster \cite{alon2005hypergraph}, and have been developed further in \cite{ehard2020pseudorandom}. For the convenience of the reader, we choose to give a self-contained treatment here as many of the technicalities simplify in the context of ($2$-uniform) graphs.

\begin{lemma}\label{lem:nice-matching}
    Let $d$ be sufficiently large.
    Let $H=(U\sqcup V,E)$ be a bipartite graph of minimum degree at least $(1-\eta)d$ and maximum degree at most $(1+\eta)d$, for some $\eta\in(0,1/2)$.
    Let $\mathcal P$ be any collection of subsets of $U$ and $V$ of size between $(1-\eta)d$ and $(1+\eta)d$ (which must each be subsets of either $U$ or $V$) which cover each vertex of $U\sqcup V$ at most twice. 

    One can find a matching in $H$ which covers at least a $1-6\max(\eta,d^{-6/25})$-fraction of every set in $\mathcal P$.
\end{lemma}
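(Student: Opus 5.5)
We will prove the lemma with a semi-random (R\"odl nibble) matching process, whose concentration is handled by the Lov\'asz local lemma. We work round by round rather than with a single random matching, because a single random edge choice only covers a constant fraction.

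Set $\gamma:=d^{-1/4}$ (say) as the nibble size. In each round, every currently uncovered vertex $u\in U$ independently activates with probability $\gamma$ and, if activated, picks a uniformly random neighbour in $V$ among the still-uncovered vertices. We keep an edge $uv$ if $v$ was picked by exactly one activated $u$. We then delete all matched vertices and repeat.

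The invariant we track for every vertex $w$ and every set $P\in\mathcal P$ is the following. After $i$ rounds, the fraction of $N(w)$ that is still uncovered is $p_i(1\pm \eta_i)$, and likewise the fraction of $P$ that is still uncovered is $p_i(1\pm\eta_i)$. Here $p_i\approx(1-\gamma+O(\gamma^2))^i$ is a deterministic survival probability, the same for both sides by symmetry of the degree conditions.

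In one round, given the invariant, each surviving vertex of $U$ or $V$ is covered with probability $\gamma(1\pm O(\gamma+\eta_i))$. The $V$ side gets this because its number of surviving neighbours is $p_i d(1\pm\eta_i)$, and each such neighbour picks it with probability about $\gamma/(p_i d)$. So the expected surviving proportion of every tracked set drops by the factor $1-\gamma$ up to a relative error $O(\gamma^2+\gamma\eta_i)$.

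For concentration within a round, the number of survivors in a set of size about $p_id$ is a function of the independent choices of vertices within distance $2$. Each choice changes the count by $O(1)$, so Azuma/McDiarmid (or Talagrand, which handles the fact that few choices matter) gives deviation $O(\sqrt{p_id\log d})$ except with probability $d^{-10}$.

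Each bad event depends only on choices within graph distance about $4$. Since the sets in $\mathcal P$ cover every vertex at most twice and have size about $d$, each event is dependent on at most $\mathrm{poly}(d)$ others. This bound holds no matter how large $|U|+|V|$ is, so the local lemma guarantees that in every round some outcome satisfies all the concentration events simultaneously. Fixing that outcome, we proceed deterministically to the next round. The errors then evolve as $\eta_{i+1}\le \eta_i+O(\gamma^2+\sqrt{\log d/(p_i d)})/\gamma$ in relative terms, roughly additively.

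We run $\tau=\gamma^{-1}\log(1/\delta)$ rounds, stopping once $p_\tau\approx\delta$. The target is $\delta\approx d^{-6/25}$, matching the statement; choosing $\gamma$ and $\delta$ as powers of $d$ should balance the error terms so that the accumulated error is at most a small multiple of $\max(\eta,d^{-6/25})$. The initial error $\eta_0=\eta$ propagates additively, not multiplicatively, since the error in each round only rescales the probabilities. At the end, every $P$ has uncovered fraction at most $\delta+O(\eta+\text{error})$, which gives the bound $1-6\max(\eta,d^{-6/25})$.

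\textbf{Main obstacle.} The delicate part is keeping the sets' concentration error below $p_i$ throughout. The sampling error $\sqrt{\log d/(p_id)}$ grows as $p_i$ shrinks, and this is exactly what limits $\delta$ to a power like $d^{-6/25}$. We will first try $\gamma=d^{-1/5}$ and check that the sum of the round errors stays $O(d^{-6/25})$ up to the final $p_\tau$; if the exponents do not close, we will stop the process earlier and accept the remaining $\delta$ as uncovered.
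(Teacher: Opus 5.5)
Your pick-and-resolve nibble is a different process from the paper's. The paper samples each edge independently with probability $\zeta/d$, where $\zeta=1/(10\sqrt d)$. This is small enough that the local lemma can forbid any two incident sampled edges, so each round's sample is already a matching. Since every tracked set lies on one side, every tracked count is a sum of independent indicators and plain Chernoff suffices. In your process a count depends on $\Theta((p_id)^2)$ choices, and plain McDiarmid then gives nothing, so you would genuinely need Talagrand or a variance-sensitive bound.

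The real gap, though, is the error propagation. Your own estimate gives coverage probabilities $\gamma(1\pm O(\gamma+\eta_i))$, hence a per-round relative error increment containing $O(\gamma\eta_i)$, a term you dropped in your displayed recursion. Over $\gamma^{-1}\log(1/\delta)$ rounds this compounds to a factor $\delta^{-O(1)}$. On the $V$-side the pick rate $\sum_{u'}\gamma/\deg(u')$ carries error from both $|N(v)|$ and the $\deg(u')$, giving roughly $\eta_i\approx\eta p_i^{-2}$. You would end with an uncovered fraction of order $\delta+\eta/\delta\ge 2\sqrt\eta$, not $O(\eta)$.

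This is not just a lossy analysis. Take all $U$-degrees $(1+\eta)d$ and all $V$-degrees $(1-\eta)d$. Then $|V|-|U|\approx 2\eta|U|$, and every matching misses this surplus. Averaging $\sum_u|N(u)\cap\text{uncovered}|$ shows some $N(u)$ keeps an uncovered fraction of at least $2\eta/(1+\eta)$ while $p_i\to 0$. So the invariant ``uncovered fraction $p_i(1\pm\eta_i)$ with $\eta_i=O(\eta)$'' is false once $p_i\ll\eta$. Your estimate that $U$-vertices have about $p_id$ surviving neighbours also fails there. Hence you cannot run to $p_\tau\approx d^{-6/25}$ when $\eta\gg d^{-6/25}$.

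The missing idea is to track the \emph{absolute} deviation and to stop once the relative deviation reaches a constant. In the paper the potential $d_i\eta_i+d_i^{19/25}$ is nonincreasing. The rate error $\zeta\eta$ exactly offsets the shrinkage of the deviation, so $d'\eta'-d\eta=\zeta^2d+\sqrt{10\zeta d\log d}=O(d^{1/4}\sqrt{\log d})$. This increment is absorbed by $d^{19/25}-(d')^{19/25}$. The iteration stops when $\eta_t>1/2$ (or $d_t$ is bounded). Then every set has at most $(1+\eta_t)d_t<3\eta_td_t\le 3(d\eta+d^{19/25})$ uncovered vertices, and this is exactly where the linear dependence on $\eta$ comes from.

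Your process could probably be repaired along these lines. It would need separate absolute bookkeeping for the two sides, since the $V$-side removal rate also inherits the $U$-side error, plus such a stopping rule. As written, the conclusion $1-6\max(\eta,d^{-6/25})$ does not follow.
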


We will prove this by iterating the following more technical lemma.

\begin{lemma}\label{lem:nice-matching-iter}
    In the setting of \cref{lem:nice-matching}, if $d$ exceeds some absolute constant and $\eta\geq d^{-1/2}$, one can find some $d'\in(\frac 9{10}d,d-\frac1{20}\sqrt d)$, some $\eta'\geq (d')^{-1/2}$, and some matching $E_1$ in $H$ which satisfy the following properties.
    Let $H'$ be the graph formed from $H$ by removing all vertices incident to some edge of $E_1$.
    Then  
    \begin{itemize}
        \item $d'\eta'+(d')^{19/25}\leq d\eta+d^{19/25}$,
        \item for each vertex $v\in V(H')$, we have $d_{H'}(v)\in[(1-\eta')d',(1+\eta')d']$ and
    
        \item for each set $S\in\mathcal P$, we have $\abs{S\cap V(H')}\in[(1-\eta')d',(1+\eta')d']$.
    \end{itemize}
\end{lemma}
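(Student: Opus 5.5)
We run one round of the Rödl nibble. Fix a small rate $\theta := d^{-1/5}$; the exponent is chosen so that the concentration errors come out at order $d^{19/25}$. Each edge of $H$ is activated independently with probability $\theta/d$. We let $E_1$ be the set of activated edges that share no endpoint with any other activated edge; these are the isolated activated edges, and they form a matching.

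\textbf{Survival and degree estimates.} A vertex $v$ of degree $d_H(v)\in(1\pm\eta)d$ survives, meaning it is not covered by $E_1$, with probability
\[
p_v = 1-\sum_{e\ni v}\frac{\theta}{d}\,(1-\theta/d)^{d_H(u)+d_H(v)-2}.
\]
This equals $1-\theta e^{-2\theta}$ up to an error of $O(\theta\eta)+O(\theta^2/d)$. We set
\[
q := 1-\theta e^{-2\theta}, \qquad d' := qd.
\]
Then $1-q\approx\theta=d^{-1/5}$, so $d'\in(\tfrac{9}{10}d,\,d-\tfrac1{20}\sqrt d)$.

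For a surviving vertex $v$, its degree in $H'$ is a sum over neighbours $u$ of the indicator that $u$ survives. For a set $S\in\mathcal P$, the quantity $\abs{S\cap V(H')}$ is a sum of survival indicators of the vertices of $S$. In both cases the expectation is
\[
q\cdot(1\pm\eta)d \pm O(\theta\eta d) \pm O(\theta^2).
\]
Correlation between survival events of distinct vertices comes only through common neighbourhoods. Conditioning on $v$'s survival changes a neighbour $u$'s survival probability by $O(\theta/d)$ per shared edge, which gives an $O(\theta)$ total error.

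\textbf{Concentration.} Each indicator depends only on the activation status of edges within distance $2$. Changing one edge therefore affects $O(d)$ indicators, but the relevant sum changes by $O(1)$ only, when counted carefully through the typical number of activated edges near the vertex. We apply a Talagrand/McDiarmid-type bound, or Chernoff after conditioning on local activations, to get deviations $O(\sqrt{\theta d}\log d)\le d^{19/25}/10$ with failure probability $\exp(-d^{\Omega(1)})$. Each bad event depends on $\mathrm{poly}(d)$ others, because $\mathcal P$ covers each vertex at most twice. The Lovász local lemma then gives an outcome where every vertex degree and every set size in $H'$ lies in $d'(1\pm\eta')$.

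\textbf{Parameter bookkeeping.} We choose $\eta'$ so that
\[
\eta' d' = q\,\eta d + C\theta\eta d + \tfrac{1}{10}d^{19/25}.
\]
The accumulated error must satisfy $\eta'd'+(d')^{19/25}\le \eta d+d^{19/25}$. Since $(d')^{19/25}\approx d^{19/25}(1-\tfrac{19}{25}\theta)$, this requires
\[
\tfrac{19}{25}\theta d^{19/25} \ge (1-q)\eta d - C\theta\eta d + \tfrac1{10}d^{19/25}.
\]
Because $1-q\approx\theta$ dominates $C\theta\eta$ only when $\eta$ is small, the relative error must be made to contract exactly like the degree. This is the delicate step.

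The cleanest route is to track the first-order error exactly. Deviations in $d_H(u)$ enter survival only at order $\theta\eta$. The expected new degree is $q\,d_H(v)$ plus $O(\theta\eta d)$, while $q\eta d$ shrinks $\eta d$ by $\theta\eta d$. We take $\theta$ small enough relative to the implicit constants so that the $O(\theta\eta d)$ term has coefficient below $1$. Then the loss $(1-q)\eta d$ exceeds it, and the random error $d^{19/25}/10$ is absorbed by the decrease $\tfrac{19}{25}\theta d^{19/25}$ in the $d^{19/25}$ term. Finally, $\eta'\ge(d')^{-1/2}$ is immediate, since $\eta'd'\ge \tfrac1{10}d^{19/25}\gg\sqrt{d'}$.

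I expect the main obstacle to be getting the second-order survival correction to be genuinely $o(\theta)\cdot\eta d$ in the direction needed. If it is not, I would instead allow $\eta$ to grow by a factor $(1+O(\theta))$ per round. I would then absorb this growth in the iteration of \cref{lem:nice-matching} by setting the $d^{-6/25}$ threshold accordingly.
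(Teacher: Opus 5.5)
The gap is in the parameter bookkeeping, and it is structural rather than a matter of constants.

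You want the gain $(1-q)\eta d$ from $q\eta d$ to beat the $O(\theta\eta d)$ error from varying survival probabilities once $\theta$ is small. But both terms are linear in $\theta$. Since $1-p_u\approx(1-q)\,d_H(u)/d$, the neighbours' survival probabilities range over $q\pm(1-q)\eta$. They therefore contribute up to $(1-q)\eta\,d_H(v)$, so the coefficient is about $1+\eta$, and shrinking $\theta$ cannot push it below $1$.

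A biregular bipartite $H$ with side degrees $(1+\eta)d$ and $(1-\eta)d$ shows this is sharp. The two sides survive with probabilities $\approx q\mp(1-q)\eta$. So the expected new degrees are $d(1+\eta)(q+(1-q)\eta)$ and $d(1-\eta)(q-(1-q)\eta)$, which still differ by $2\eta d$: the absolute error does not contract at all. With your centring $d'=qd$, the outcomes of your local-lemma argument therefore force $\eta'd'\ge\eta d+(1-q)\eta^2d-\tilde O(\sqrt{\theta d})$. For constant $\eta$ and $\theta=d^{-1/5}$, the excess $\approx\eta^2d^{4/5}$ dwarfs the available decrease $d^{19/25}-(d')^{19/25}\approx\frac{19}{25}d^{14/25}$, so the first bullet fails.

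Separately, the random term $\frac1{10}d^{19/25}$ you put into $\eta'd'$ already exceeds $\frac{19}{25}\theta d^{19/25}$. So your displayed inequality cannot hold as written; you should use your own bound $O(\sqrt{\theta d}\log d)$ there. Your fallback of letting $\eta$ grow by a factor $1+O(\theta)$ per round abandons the first bullet, so it does not prove the lemma as stated. Moreover, unless that factor is at most about $1/q$ (which is the first bullet again), the absolute error compounds over the iteration and the $6\max(\eta,d^{-6/25})$ conclusion of \cref{lem:nice-matching} is lost.

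The missing idea has three parts: $\eta d$ only needs to be preserved, this requires recentring $d'$, and the small additive error per round is paid for by the decrease of the $d^{19/25}$ term. In your process one checks that $1-p_u\in(1-q)(1\pm\eta)(1+O(\theta/d))$. Writing $d_H(u)=(1+a)d$, the factor $(1+a)e^{-\theta a}$ is exactly offset at the extremes by the neighbours' factor $e^{\pm\theta\eta}$. So with $d'=d\,(q+(1-q)\eta^2)$, all expected degrees and set sizes lie within $\eta d+O(\theta)$ of $d'$. Then $O(\theta)+\tilde O(\sqrt{\theta d})\le\frac{19}{25}(1-q)(1-\eta^2)d^{19/25}\le d^{19/25}-(d')^{19/25}$ closes the bound.

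This is exactly the paper's mechanism. It takes $d'=(1-\zeta+\delta\eta\zeta)d$ and obtains $d'\eta'-d\eta=(\delta-\eta)\zeta d=\zeta^2d+\sqrt{10\zeta d\log d}$. With $\zeta=1/(10\sqrt d)$, this is absorbed into $d^{19/25}-(d')^{19/25}$.

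The paper's random process also makes concentration trivial. It deletes every endpoint of independently chosen edges. Since $H$ is bipartite and each $N(v)$ and each $S\in\mathcal P$ lies in one side, the deletion indicators within such a set depend on disjoint stars and are independent, so Chernoff applies. The local lemma over pairs of incident chosen edges, which is what forces $\zeta=O(d^{-1/2})$, makes the chosen edges a matching. Your isolated-edge rule destroys that independence. Your concentration step would therefore need a genuine Talagrand or typical-bounded-differences argument, not just the sketch given.
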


\begin{proof}[Proof of \cref{lem:nice-matching} given \cref{lem:nice-matching-iter}]

    We may assume $\eta\geq d^{-1/2}$ without loss of generality.
    Let $\eta_0=\eta$ and $d_0=d$.
    Iteratively apply \cref{lem:nice-matching-iter} to find some sequences $(d_i),(\eta_i),(H_i)$ for which $\eta_i\geq d_i^{-1/2}$ for each $i$, the sequence $r_i:=d_i\eta_i+d_i^{19/25}$ is nonincreasing, each $H_i$ is formed from $H_{i-1}$ by removing the vertices incident to some matching, and
    \begin{itemize}
        \item for each vertex $v\in V(H_i)$ we have $d_{H_i}(v)\in[(1-\eta_i)d_i,(1+\eta_i)d_i]$, and
        \item for each set $S\in\mathcal P$ we have $\abs{S\cap V(H_i)}\in[(1-\eta_i)d_i,(1+\eta_i)d_i]$.
    \end{itemize}
    Each $H_i$ is formed from $H$ by removing the vertices incident to some matching.
    We are forbidden from applying \cref{lem:nice-matching-iter} only when $\eta_i\geq1/2$ or $d_t<C$ for some absolute constant~$C$ at which \cref{lem:nice-matching-iter} breaks (which we can assume exceeds $1000$).
    In fact, one of these conditions will eventually occur, since $\lfloor d_{i+1}\rfloor\leq \lfloor d_i-\frac1{20}\sqrt d_i\rfloor\leq \lfloor d_i\rfloor-1$ if $d_i>1000$. 

    So, we can let $t<\infty$ be such that $\eta_t>1/2$ or $d_t<C$, and let $M$ be the matching of $H$ for which $H_t$ was formed by removing the vertices incident to $M$.
    Assume $d>C^2$.
    For each $S\in\mathcal P$ we have
    \[\abs{S\cap V(H_t)}\leq (1+\eta_t)d_t.\]
    If $d_t\leq C$ then this is at most $2C\leq 4d\eta$; otherwise we have $d_t>1000$ and $\eta_t>1/2$, and so
    \[\abs{S\cap V(H_t)}\leq(1+\eta_t)d_t<3\eta_td_t<3r_t<3r_0= 3d\eta+3d^{19/25}\leq 6d\max(\eta,d^{-6/25}).\]
    In either case, we obtain the result.
\end{proof}

\begin{proof}[Proof of \cref{lem:nice-matching-iter}]

    We will use a nibble procedure based on the Lov\'asz local lemma.

    Pick some $\zeta\in(1/d,1/2)$ and $\delta\in(0,1)$ to be specified later, and let $p=\zeta/d$.
    Let $E_1$ be a subset of $E$ in which each element is chosen independently with probability $p$.
    Say a vertex is \emph{chosen} if it is an endpoint of some edge in $E_1$. We define three types of events:
    \begin{itemize}
        \item For any pair $\{e_1,e_2\}$ of incident edges, let $\mathcal E_{e_1,e_2}$ be the event that both $e_1$ and $e_2$ lie in $E_1$.
        (Call these events \emph{type I}.)
    
        \item For any vertex $v$, let $\mathcal E_v$ be the event that the number of non-chosen neighbors of $v$ is not between $(1-\zeta(1+\delta))(1-\eta)d$ and $(1-\zeta(1-\delta))(1+\eta)d$.
        (Type II)
    
        \item For each set $S\in\mathcal P$, let $\mathcal E_S$ be the event that the number of chosen elements of $S$ lies not between $(1-\zeta(1+\delta))(1-\eta)d$ and $(1-\zeta(1-\delta))(1+\eta)d$.
        (Type III)
    \end{itemize}

    Each vertex $w$ is chosen with probability exactly $1-(1-p)^{d_w}$, which (because of the inequalities $(1-\eta)d\leq d_w\leq (1+\eta)d$) can be computed to lie in
    \[\left[\zeta(1-\eta)-\zeta^2,\zeta(1+\eta)+\zeta^2\right].\]
    For a set $T$ contained in $A$ or $B$, let $X_T$ be the number of chosen elements of $T$, and let $\mu_T=\EE X_T$ and $\rho_T=\frac{\mu_T}{\zeta\abs{T}}-1$.
    The above computation implies $\abs{\rho_T}\leq\zeta+\eta$.
    For each such~$T$, the number of chosen elements of $T$ is a sum of independent $\{0,1\}$ random variables.
    As a result, for any $\delta\in(\abs{\rho_T},1)$, a Chernoff bound implies
    \begin{align*}
        \Pr\big[\abs{X_T-\zeta\abs{T}}\geq\delta\zeta\abs{T}\big]
        &\leq \Pr\big[\abs{X_T-\mu_T}\geq(\delta-\abs{\rho_T})\zeta\abs{T}\big]\\
        &\leq 2\exp\left(-\frac1{10}(\delta-\abs{\rho_T})^2\zeta\abs{T}\right)\\
        &\leq 2\exp\left(-\frac1{10}(\delta-\eta-\zeta)^2\zeta\abs{T}\right).
    \end{align*}
    
    Next, we claim that each of the above events is mutually independent of all but at most~$40d^3$ other events.
    Denote by $R_{e_1,e_2} = \{e_1,e_2\}$, $R_{v} = \{vw : w \in N(v)\}$ and $R_S = \{vw : v \in S\}$ the \emph{witnessing} sets of edges whose random choices determine the events $\mathcal E_{e_1,e_2}$, $\mathcal E_v$, and $\mathcal E_S$, respectively.
    By assumption, we can bound $\abs{R_v} \leq  (1+\eta)d$ and $\abs{R_S} \leq \abs{S} (1+\eta)d \leq (1+\eta)^2d^2$.
    On the other hand, a fixed edge $uv$ is contained in at most $\abs{N(u)}+\abs{N(v)} \leq 2(1+\eta)d$ witnessing sets of type I, at most $2$ witnessing sets of type II, namely $\mathcal E_u$ and $\mathcal E_v$, and at most~$4$ witnessing sets of type III, since every vertex is covered by at most $2$ sets of $\mathcal P$.
    Since two events are independent unless their respective witnessing sets share an edge, every event is mutually independent of all but at most $(1+\eta)^2 d^2 (2(1+\eta)d + 2 +4 )\leq 40d^3$ events, as desired.

    We may therefore apply the Lov\'asz local lemma to find that, with positive probability, no event of type I, II, or III occurs, as long as 
    \[
        \zeta^2\leq \frac{1}{100d}\,,\qquad
        \delta>\eta+\zeta\,,\qquad\text{and}\qquad 
        (\delta-\eta-\zeta)^2\zeta d\geq10\log d.
    \]
    This allows us to replace $H$ with some induced subgraph $H'$ with degrees lying between $(1-\zeta+\delta\eta\zeta)d\pm(\eta-\eta\zeta+\delta\zeta)d$ and replace $\mathcal P$ with a corresponding partition of $V(H')$ with parts of sizes within $(1-\zeta+\delta\eta\zeta)d\pm(\eta-\eta\zeta+\delta\zeta)d$. 
    Let
    \[d':=(1-\zeta+\delta\eta\zeta)d,\quad \eta':=\frac{\eta-\eta\zeta+\delta\zeta}{1-\zeta+\delta\eta\zeta}.\]
    What remains is to choose $\delta$ and $\zeta$. 
    Set $\zeta=1/(10\sqrt d)$ and 
    \[
        \delta=\eta+\zeta+\sqrt{\frac{10\log d}{\zeta d}}=\eta+\zeta+10\frac{\sqrt{\log d}}{d^{1/4}}.
    \]
    For $d$ larger than some absolute constant we have $\delta<1$, and we have $\eta<1/2$ by assumption, so $\frac 9{10}d<d'<d-\frac1{20}\sqrt d$ and
    \[\eta'=\frac{\eta-\eta\zeta+\delta\zeta}{1-\zeta+\delta\eta\zeta}>\frac{\eta-\eta\zeta+\delta\eta^2\zeta}{1-\zeta+\delta\eta\zeta}=\eta.\]
    Moreover, we have
    \[d'\eta'=(\eta-\eta\zeta+\delta\zeta)d=\left(\eta+\zeta^2+\sqrt{\frac{10\zeta\log d}d}\right)d>d\eta,\]
    and so $(d')^{1/2}\eta'\geq (d')^{-1/2}(d\eta)\geq d^{1/2}\eta\geq 1$. 
    Finally we have
    \begin{align*}
        d'\eta'-d\eta
        &=(\eta-\eta\zeta+\delta\zeta)d-d\eta=(\delta-\eta)\zeta d\\
        &=\zeta^2 d + \sqrt{10\zeta d\log d}\\
        &=\frac1{100}+d^{1/4}(\log d)^{1/2}\\
        &\leq \frac{19}{500}d^{13/50}=d^{19/25}\frac{19}{25}\cdot\frac1{20}d^{-1/2}\\
        &\leq d^{19/25}\left(1-\left(1-\frac1{20}d^{-1/2}\right)^{19/25}\right)\\
        &=d^{19/25}-\left(d-\frac1{20}\sqrt d\right)^{19/25}\leq d^{19/25}-(d')^{19/25},
    \end{align*}
    where we have used the fact that $d$ is sufficiently large in the fourth line and the inequality $\eps p\leq1-(1-\eps)^p$ for $0<p,\eps<1$ to go from the fourth line to the fifth. 
    The result follows.    
\end{proof}

\subsection{Proof of the path decomposition lemma}

\begin{proof}[Proof of \cref{lem:path-decomp}]

    We begin by using the robust Hall condition to find a regular spanning subgraph $G_0$ of $G$.
    Let $\ell=2\lfloor \alpha k/2\rfloor$.

    \begin{claim}
        There exists a multiset $E_0$ of edges of $G$ for which $G_0 = (V(G),E_0)$ is $\ell$-regular in which every edge has multiplicity at most $2$.
    \end{claim}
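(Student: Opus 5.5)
We build $E_0$ as a union of $\ell/2$ edge-disjoint \emph{$2$-factors in the multigraph sense}, each coming from a perfect matching of a bipartite double cover, using \cref{thm:hall} as the source of these matchings.

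Concretely, we extract perfect matchings $M_1,\dots,M_{\ell/2}$ of the bipartite double cover $B(G)$ one at a time, removing edges as we go. After choosing $M_1,\dots,M_{j-1}$, let $G_j$ be the graph obtained from $G$ by deleting every edge $uv$ that has been used in the form $(u,v')$ or $(v,u')$ by one of the previous matchings. Then $d_{G_j}(v)\ge d_G(v)-2(j-1)\ge (1+\alpha)k-\ell+2 \ge k+1$ for every $v$, since $j-1<\ell/2\le \alpha k/2$. By \cref{thm:hall}, $G_j$ satisfies Hall's condition, hence so does $B(G_j)$, and Hall's theorem gives a perfect matching $M_j$ of $B(G_j)$.

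Each $M_j$ is a permutation $\pi_j$ of $V(G)$ with $v\pi_j(v)\in E(G)$. Let $F_j$ be the multiset of edges $\{v\pi_j(v)\}$, where a $2$-cycle $v\leftrightarrow w$ contributes the edge $vw$ twice. Then $F_j$ is a $2$-regular multigraph: every vertex is the tail of one arc and the head of one arc. Put $E_0=\bigsqcup_j F_j$. This is $2\cdot(\ell/2)=\ell$-regular.

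It remains to check multiplicities. An edge $uv$ can appear in at most one $F_j$, because it is deleted from $G_{j'}$ for all $j'>j$ once used. Within that $F_j$ it appears at most twice (exactly twice only when $\pi_j$ swaps $u$ and $v$). So every multiplicity is at most $2$.

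The one point needing care is the deletion bookkeeping. Each $M_j$ removes at most two edges at each vertex: its outgoing arc and its incoming arc. Hence the minimum degree decreases by at most $2$ per round, which is exactly what the $\alpha k$ slack pays for. This is why $\ell$ is taken to be $2\lfloor\alpha k/2\rfloor$. No further obstacle is expected, since \cref{thm:hall} needs only $\delta\ge k+1$.
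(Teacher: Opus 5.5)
Your proposal is correct and follows the paper's argument. Both peel off $\ell/2$ edge-disjoint cycle factors, obtained as perfect matchings in the bipartite double cover via \cref{thm:hall} while the minimum degree stays above $k$, and count edges in $2$-cycles twice. Your explicit bookkeeping spells out details the paper leaves implicit: the degree drops by at most $2$ per round, and used edges are deleted, so no multiplicity exceeds $2$.
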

    \begin{proofofclaim}
        Hall's condition in a graph $G$ is equivalent to (bipartite) Hall's condition in the bipartite $2$-lift (also called a bipartite double-cover) of $G$ (recall the discussion in \cref{sec:Hall}). 
        We conclude that any graph $G$ satisfying Hall's condition has a cycle factor (a component of which is allowed to be a single edge, thought of as a $2$-cycle). 
        By \cref{thm:hall}, we can thus find $\ell/2$ edge-disjoint cycle factors of $G$ by peeling them off one-by-one to form successive subgraphs $G'$ of $G$ until the condition $\delta(G')>k$ fails to hold. 
        Let $E_0$ be the multiset of edges contained in these $\ell/2$ cycle factors, where we include an edge which appears in a $2$-cycle in some cycle factor twice. 
        In this way, the (not necessarily simple) graph $G_0=(V(G),E_0)$ is~$\ell$-regular.
    \end{proofofclaim}

    Let $c=\lfloor k^{1/12}/100\rfloor$ and $r=k^{-1/12}$.
    By repeatedly applying \cref{lem:distributed-partition} to $G_0$, we can partition
    \[V(C_n)=V_1\sqcup\cdots\sqcup V_c\]
    in such a way that each $V_i$ is $(1/c,r)$-distributed for $G_0$ (here we use $r/c\geq k^{-1/5}$).
    Write $\eps=1/c$ and $d=\ell/c$ and $\eta=2rk^{4/5}d^{-1}<400\alpha^{-1}k^{-1/5}<1/2$.
    Since each $V_i$ is $(\eps,r)$-distributed, we have
    \begin{equation}\label{eq:Vi-distr}
        \abs[\big]{\abs{I\cap V_i}-\eps\abs{I}}\leq rk^{4/5}
    \end{equation}
    for each interval $I$ of length at most $2k+1$. 
    Moreover, the vertex degrees of the bipartite graphs $H_i:=G_0[V_i\sqcup V_{i+1}]$ are each bounded between $(1-\eta)d$ and at most $(1+\eta)d$. 
    Indeed, let $x\in V_i$.
    Since $V_{i+1}$ is $(\eps,r)$-distributed for $G_0$ and $x$ has degree exactly $\ell$ in $G_0$, we have
    \begin{equation}\label{eq:Vi-deg-distr}
        \abs*{\deg_{H_i}(x)-d}=\abs[\big]{\abs{V_{i+1}\cap N_{G_0}(x)}-\eps\abs{N_{G_0}(x)}}\leq rk^{4/5}<\eta d.
    \end{equation}
    
    Let $\mathcal P$ be a collection of intervals of length between $(1-\eta/2)\ell$ and $(1+\eta/2)\ell$ which cover $V(C_n)$ and cover each vertex of $C_n$ at most twice.   
    For each $i$, let $\mathcal P_i:=\{I\cap V_i\colon I\in \mathcal P\}$.
    The bound \eqref{eq:Vi-distr} implies that, for each $I \in \mathcal P$,
    \[\abs[\big]{\abs{I\cap V_i}-d}\leq\abs[\big]{\abs{I\cap V_i}-\eps\abs{I}}+\abs[\big]{\eps\abs{I}-d}\leq rk^{4/5}+\frac\eta2d=\eta d.\]
    By \eqref{eq:Vi-distr} and the above, the bipartite graph $H_i$ along with the collection $\mathcal P_i\sqcup\mathcal P_{i+1}$ satisfies the preconditions of \cref{lem:nice-matching}.
    As a result, there exists a matching $\Sigma_i$ in $H_i$ which covers at least a $(1-6\max(\eta,d^{-6/25}))$-fraction of each set in $\mathcal P$. 
    Let $U_i$ and $U_{i+1}'$ be the set of vertices in~$V_i$ and $V_{i+1}$, respectively, not covered by $\Sigma_i$.
    We compute, assuming that $k$ is larger than some absolute constant,
    \[\eta=2rck^{4/5}\ell^{-1}>\frac1{100}\alpha^{-1}k^{-1/5}>2\alpha^{-1}k^{-11/50}>2(\alpha k^{11/12})^{-6/25}>d^{-6/25}.\]

    The graph $(V(C_n),\bigcup E(\Sigma_i))$ is a spanning linear forest of $G$. The multiset of endpoints of this spanning linear forest is exactly
    \[\mathcal U:=(V_1\sqcup U_1)\cup (V_c\sqcup U_c')\cup\bigcup_{i=1}^{c-1}(U_i\sqcup U_i').\]
    It remains to show that $\mathcal U$ is locally sparse.

    First, we note that $V_1$ and $V_c$ are both $(\eps,r)$-locally sparse, and so each intersects each interval $I$ of length at most $2k+1$ in at most $\eps\abs{I}+\frac1{10}k^{4/5}$ elements.
    Next, each interval $I$ in~$C_n$ is covered by at most $\abs{I}/((1-\eta/2)\ell)+2\leq 2\abs{I}/\ell+2$ elements of $\mathcal P$. We conclude that
    \begin{align*}
        \abs*{\left(U_1\cup U_c'\cup \bigcup_{i=2}^{c-1}(U_i\sqcup U_i')\right)\cap I}
        &\leq \left(2\frac{\abs{I}}\ell+2\right)\max_{J\in\mathcal P}\abs*{\left(U_1\cup U_c'\cup \bigcup_{i=2}^{c-1}(U_i\sqcup U_i')\right)\cap J}\\
        &\leq \left(2\frac{\abs{I}}\ell+2\right)\cdot 2c\cdot 6\eta(1+\eta/2)d\\
        &\leq 25(\abs{I}+\ell)\eta=25\abs{I}\eta+50rck^{4/5}\leq 25\abs{I}\eta+\frac12k^{4/5}.
    \end{align*}
    Hence $\mathcal U$ is $(20\eps+30\eta,1/2)$-locally sparse. Since $\eta<400\alpha^{-1}k^{-1/5}<\eps<110k^{-1/12}$, this finishes the proof.  
\end{proof}

\section{Dirac-type thresholds for clique containment}\label{sec:dirac-clique-proof}

Having concluded the proof of \cref{thm:main}, we now move on to the other extremal questions in $C_n^k$ discussed in \cref{sec:other-q}. 
In this section, we prove \cref{thm:dirac-clique}, answering the Dirac-type problem in $C_n^k$ for clique containment: what minimum degree is necessary to guarantee that a subgraph of $C_n^k$ has $K_t$ as a subgraph?
First, we present a construction to show that the bound in \cref{thm:dirac-clique} is tight.

\begin{lemma}\label{lem:dirac-clique-construction}
    Let $n>2k+\frac{2k}{t-2}$ be divisible by $\frac k{t-2}$.
    Then there exists a $K_t$-free subgraph of~$C_n^k$ with minimum degree $(2-\frac1{t-2})k+1$.
\end{lemma}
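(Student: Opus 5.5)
The plan is to delete all edges inside short consecutive blocks. Write $m:=\frac{k}{t-2}$, which is an integer because the statement implicitly requires $(t-2)\mid k$. Since $m\mid n$, we can partition $V(C_n)$ into $N:=n/m$ consecutive intervals (blocks) $B_0,\ldots,B_{N-1}$ of $m$ vertices each. Let $G$ be the spanning subgraph of $C_n^k$ obtained by deleting every edge with both endpoints in the same block.

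\emph{Degree count.} Every vertex $v$ has $2k$ neighbours in $C_n^k$; here we use $n>2k+1$, so the $2k$ vertices within distance $k$ of $v$ are distinct. Because $m\leq k$, all $m-1$ other vertices of the block of $v$ lie within distance $k$ of $v$. Hence
\[\deg_G(v)=2k-(m-1)=\left(2-\tfrac{1}{t-2}\right)k+1,\]
exactly as required. This step is routine.

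\emph{Reduction of $K_t$-freeness.} A clique $X$ in $G$ has its vertices in pairwise distinct blocks, and any two of its vertices are at $C_n$-distance at most $k$. Let $\beta(v)\in\ZZ/N\ZZ$ be the index of the block containing $v$. If $\dist_C(u,v)\leq k=(t-2)m$, then the cyclic distance between $\beta(u)$ and $\beta(v)$ is at most $t-2$. Consequently $\beta(X)$ is a set of $\abs{X}$ distinct elements of $\ZZ/N\ZZ$ that forms a clique in $C_N^{\,t-2}$. The hypothesis $n>2k+\frac{2k}{t-2}$ gives $N>2(t-2)+2$, so $N\geq 2D+2$ with $D:=t-2$. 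It therefore suffices to show
\[\omega\bigl(C_N^{D}\bigr)=D+1\qquad\text{whenever } N\geq 2D+2,\]
because then $\abs{X}\leq t-1$.

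\emph{Clique number of $C_N^D$.} This is the step I expect to be the main obstacle, since a clique may a priori ``wrap around'' the cycle (for example $\{0,D,2D\}$ in $\ZZ/3D\ZZ$). I would take a clique $Y$ of size $s\geq2$ and list its elements cyclically as $y_1,\ldots,y_s$, with gaps $g_1,\ldots,g_s\geq1$ satisfying $\sum g_i=N$.

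If some gap satisfies $g_j\geq N-D$, then $Y$ lies in an arc of $N-g_j+1\leq D+1$ consecutive points, so $s\leq D+1$.

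Otherwise every gap is at most $N-D-1$. I would then pair each $y_i$ with its ``antipodal'' partner $y_{i+\lfloor s/2\rfloor}$. Because $y_i$ and its partner are adjacent, one of the two arcs between them has length at most $D$. Summing these constraints over all $i$, and using that each gap is counted in the short arc of roughly half the pairs, should give $N\leq 2D+1$ as soon as $s\geq D+2$, contradicting $N\geq 2D+2$. If this double counting proves awkward, the fallback is the case $N=2D+2$, where $C_N^D$ is $K_N$ minus a perfect matching and so has clique number exactly $N/2=D+1$; larger $N$ would then be handled by an induction that contracts a point not in $Y$. Together with the degree count, this shows $G$ is $K_t$-free.
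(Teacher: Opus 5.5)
Your proposal is correct and follows the paper's proof: you use the same block construction and the same degree count, and you reduce to the same fact. That fact is that $t$ distinct blocks out of $N\ge 2t-2$ always include two with at least $t-2$ blocks between them on both sides. The paper simply asserts this, and your fallback actually proves it: take the base case $N=2D+2$, where $C_N^D$ is $K_N$ minus a perfect matching, and then delete a point outside $Y$, which cannot increase cyclic distances within $Y$.

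Make that fallback your actual argument rather than the antipodal double counting, which cannot work as described. For $D=2$, $N=6$ and $Y=\{0,1,2,3\}\subset\ZZ/6\ZZ$, every pair $y_i,y_{i+2}$ is within distance $D$ and every gap is at most $N-D-1$, yet $|Y|=D+2$. Here the gap of length $3$ lies in no short arc.
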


\begin{proof}
    Indeed, partition the vertex set of $C_n^k$ into intervals of size $k/(t-2)$, and let $G$ be the graph obtained from $C_n^k$ by deleting all edges that are within the same interval.
    Since at most~$\frac k{t-2}$ edges incident to each vertex have been deleted, it follows that $\delta(G) \geq (2-\frac1{t-2})k+1$.
    Moreover, $G$ does not contain $K_t$ as a subgraph.
    Indeed, such a $K_t$ would need to have vertices in $t$ intervals.
    Among any $t$ intervals, one can find two intervals with at least $t-2$ intervals between them in either direction, since there are at least $2t-2$ intervals in total. However, any two vertices in these two intervals are at distance strictly exceeding $k$ along $C_n$, and so they cannot be connected by an edge of $G\subset C_n^k$.  
\end{proof}

It is worth noting that this construction also gives a lower bound for the value of $\alpha$ in \cref{qn:Kt-factor}.

\begin{corollary}\label{cor:dirac-factor-construction}
    Let $n>2k+\frac{2k}{t-2}$ be divisible by $(t-1)\frac k{t-2}$.
    Then there exists a $K_{t-1}$-factor-free subgraph of $C_n^k$ with minimum degree $(2-\frac1{t-2})k$.
\end{corollary}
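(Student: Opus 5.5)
Reuse the graph $G$ from the proof of \cref{lem:dirac-clique-construction}: partition $V(C_n)$ into consecutive intervals of size $m:=\frac{k}{t-2}$ and delete every edge of $C_n^k$ joining two vertices in the same interval. The divisibility hypothesis $(t-1)m\mid n$ in particular gives $m\mid n$, so the intervals tile the cycle exactly. The size hypothesis is the same as in the lemma. Hence the lemma's argument applies verbatim: each vertex loses at most $m-1$ edges, so $\delta(G)\geq 2k-(m-1)\geq(2-\frac1{t-2})k$, and $G$ is $K_t$-free.

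The plan is to show that $G$ has no $K_{t-1}$-factor by a counting argument. First I would show that every copy of $K_{t-1}$ in $G$ occupies $t-1$ \emph{consecutive} intervals. Its $t-1$ vertices lie in pairwise distinct intervals, since edges inside an interval were deleted. Consider the clockwise-first and clockwise-last of these intervals along the shorter arc. If they were not within $t-2$ steps of each other, there would be at least $t-2$ intermediate intervals between some two used intervals. Then the two vertices would be at distance more than $(t-2)m=k$, exactly as in the lemma's proof, so they could not be adjacent.

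The delicate part is that ``first'' and ``last'' must make sense on a cycle. This needs the total number of intervals $N=n/m$ to be large, and $N\geq 2t-2$ suffices, as used in the lemma. Then the $t-1$ used intervals, occupying $t-1$ of $N$ positions with all pairwise cyclic gaps bounded by $t-2$, must form a contiguous block.

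Once every clique is contiguous, each $K_{t-1}$ uses exactly one vertex from each of $t-1$ consecutive intervals. The counting then breaks the divisibility: a $K_{t-1}$-factor needs $n/(t-1)$ disjoint cliques. I expect this step to be the main obstacle, since the intervals have equal size $m$, so naive counting per interval is balanced.

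My first attempt would be to instead choose interval sizes that are not all equal. I would keep the same $\delta$ by using sizes at most $m$, and make the total count incompatible. For example, I would take one interval of size $m-1$ and compensate elsewhere, and then argue via a weighting argument (summing over residues of interval indices mod $t-1$) that the cliques cover each residue class of intervals equally. A factor would then force equal total sizes across residue classes mod $t-1$, which an unequal choice violates. Using $(t-1)m\mid n$, the number of intervals $N$ is a multiple of $t-1$, which makes this residue-class bookkeeping clean.

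One caveat: with intervals of size $\leq m$, any vertex loses at most $m-1$ edges, which preserves the degree bound. I would check this carefully, along with the contiguity claim when interval sizes vary.
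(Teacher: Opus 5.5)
\textbf{There is a genuine gap: the step you deferred (contiguity once sizes vary) fails, both for your suggested sizes and for the paper's own construction.}

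Your final plan matches the paper's. The paper also deletes all edges inside intervals, uses sizes $\frac k{t-2}+1$, $\frac k{t-2}-1$, $\frac k{t-2}$ according to whether the index is $\equiv 1$, $\equiv -1$, or neither modulo $t-1$, and deduces from ``every $K_{t-1}$ lies in $t-1$ consecutive intervals'' that a factor would force equal residue-class totals.

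Write $m=\frac k{t-2}$ and $s_i=\abs{I_i}$. Suppose some run $I_{a+1},\dots,I_{a+t-2}$ of $t-2$ consecutive intervals has total size at most $k-1$. Then the last vertex of $I_a$ and the first vertex of $I_{a+t-1}$ are at distance at most $k$ and lie in different intervals. Together with one vertex from each of $t-3$ intervals of the run, they form a $K_{t-1}$ of $G$. This clique has two vertices in the residue class of $I_a$ and none in the class of the omitted interval, so the counting argument no longer applies. With all sizes at most $m$ and one of size $m-1$, as you propose, every run through the short interval is of this kind. (Incidentally, sizes up to $m+1$ are compatible with $\delta\ge 2k-m$.)

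No choice of sizes fixes this while keeping $N=n/m$ intervals, which your residue bookkeeping relies on. The $N$ cyclic run sums $s_i+\dots+s_{i+t-3}$ add up to $(t-2)n=Nk$. So if none is below $k$, they all equal $k$, whence $s_{i+t-2}=s_i$. It follows that $s_{r+j(t-1)}=s_{r+j}$, so class $r$ has total $\sum_{j=0}^{P-1}s_{r+j}$ with $P=N/(t-1)$. This total is independent of $r$, since $s$ has a period dividing $\gcd(t-2,N)$, and that number divides $P$ because it is coprime to $t-1$.

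Concretely, take the paper's graph with $I_1=\{0,\dots,m\}$ and $t\ge4$. The sets $\{j,j+m,\dots,j+(t-2)m\}$ for $1\le j\le m$, together with their translates by multiples of $(t-1)m$, are cliques of $G$ and form a $K_{t-1}$-factor. For $j=m$ the clique meets $I_1,\dots,I_{t-3},I_{t-1},I_t$, skipping the short interval. For example, with $t=4$, $k=4$, $n=18$ the factor is $\{1,3,5\}$, $\{2,4,6\}$ and their shifts by $6$.

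A working version of the interval idea must use fewer than $n/m$ intervals. For instance, take $N$ intervals with $t-1\mid N$, all of size $m$ or $m+1$, with exactly $(t-1)m$ of size $m+1$ spread unevenly over the residue classes. Then all runs have size at least $k$ and the class totals differ. However, this needs $N\ge(t-1)(m+1)$, i.e.\ $n\ge(t-1)m(m+2)$. So this route does not reach the full range of $n$ allowed in the statement.
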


\begin{proof}
    Assume that $n = m k/(t-2)$ with $m$ divisible by $t-1$.
    Partition the vertex set of $C_n^k$ into intervals $I_1,\dots,I_m$ of size
    \begin{align*}
        |I_\ell| = \begin{cases}
              k/(t-2) + 1 & \text{if } \ell \equiv 1 \mod t-1\,, \\ 
            k/(t-2) - 1 & \text{if } \ell \equiv -1  \mod t-1\,, \\ 
              k/(t-2) & \text{otherwise}.
        \end{cases}
    \end{align*}
    Let $G$ be the graph obtained from $C_n^k$ by deleting all edges that are within the same interval.
    It follows that $\delta(G) \geq (2-\frac1{t-2})k$.
    Moreover, $G$ does not contain a $K_{t-1}$-factor as a subgraph, since every copy of $K_{t-1}$ is contained in $t-1$ consecutive intervals.
\end{proof}

We now turn to the proof of the first part of \cref{thm:dirac-clique}, namely that if $G$ is a spanning subgraph of $C_n^k$ with minimum degree strictly exceeding $(2-\frac1{t-2})k+1$, then $G$ has $K_t$ as a subgraph. We begin by defining some notation and terminology. For two vertices $u, v$ of $G$, we denote by $I(u, v)$ the shorter interval of $C_n$ from $u$ to $v$ (inclusive). For a vertex $u$ of $G$, we denote by $I_k(u)$ the interval of $C_n$ consisting of $N_{C_n^k}[u]$. For an edge $uv$ of $G$, the \emph{length} of $uv$ is the distance from $u$ to $v$ in $C_n$. We also fix a mapping of the vertices of $C_n^k$ to the elements of $\ZZ/n\ZZ$, so that for two vertices $u, v$ of $G$, the number of vertices in $I(u, v)$ is $v-u + 1$.

Observe that every complete subgraph $H$ of $G$ contains a unique edge $uv \in E(H)$ such that $V(H) \subseteq I(u, v)$. 
In this case, $uv$ is called the \emph{longest edge} of $H$.

\begin{proof}[Proof of \cref{thm:dirac-clique}]
 
		We argue by induction on $t$, which lets us assume that $t\ge 4$ and that $G$ 
		contains a copy of $K_{t-1}$.
    Let $xz \in E(G)$ be a shortest edge of $G$ such that $xz$ is the longest edge of a $K_{t-1}$ subgraph of $G$. Let $S$ be the set of all vertices of $G$ other than $x$ and $z$ that are in some $K_{t-1}$ subgraph of $G$ with longest edge $xz$, and set $m = \abs{S}$. Fix one $K_{t-1}$ subgraph $Y$ of $G$ with longest edge $xz$, and set $V(Y) = \{y_0 = x, y_1, y_2, \hdots, y_{t-3}, y_{t-2} = z\}$. 
    Observe that $\{y_1, \hdots, y_{t-3}\} \subseteq S$ and that $S \subseteq I(x, z)$. Let $\delta = \delta(G)$ be the minimum degree of $G$. 
    
    \begin{claim}\label{cl1}
    	$\delta \leq 2k + 1 - \frac{m}{t-3}$. 
    \end{claim}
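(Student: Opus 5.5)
We argue by contradiction with $K_t$-freeness: throughout the proof of \cref{thm:dirac-clique} we may assume $G$ contains no $K_t$, since otherwise we are done. The plan is to find a vertex among $y_1,\dots,y_{t-3}$ with many non-neighbours inside its own window $I_k(\cdot)$, and then bound its degree. Minimality of $xz$ should not be needed for this claim; only the structure of $S$ and $Y$ is used.

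First, every $s\in S$ is adjacent to both $x$ and $z$, because $s$ lies in some $K_{t-1}$ whose longest edge is $xz$. Now take $s\in S\setminus\{y_1,\dots,y_{t-3}\}$. If $s$ were adjacent to all of $y_1,\dots,y_{t-3}$, then $V(Y)\cup\{s\}$ would span a $K_t$. Hence each such $s$ has at least one non-neighbour among $y_1,\dots,y_{t-3}$. Since there are $m-(t-3)$ such vertices $s$, the pigeonhole principle gives some index $i\in\{1,\dots,t-3\}$ for which $y_i$ is non-adjacent to at least $\frac{m-(t-3)}{t-3}=\frac{m}{t-3}-1$ vertices of $S\setminus\{y_i\}$.

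Next we locate these non-neighbours. Because $xz$ is an edge of $G\subseteq C_n^k$, the interval $I(x,z)$ has at most $k+1$ vertices. The vertex $y_i$ lies in $I(x,z)$, so every vertex of $I(x,z)$ is within $C_n$-distance $k$ of $y_i$, giving $S\subseteq I(x,z)\subseteq I_k(y_i)$. Since $N_G(y_i)\subseteq I_k(y_i)\setminus\{y_i\}$, a set of size $2k$, removing the non-neighbours found above yields
\[
\deg_G(y_i)\le 2k-\Bigl(\frac{m}{t-3}-1\Bigr)=2k+1-\frac{m}{t-3}.
\]
As $\delta\le\deg_G(y_i)$, this proves the claim.

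The only delicate point is the bookkeeping of the additive constant. The vertices $y_j$ themselves are excluded from the pigeonhole count, which costs the $-1$, and this is exactly compensated by $y_i\notin N_G(y_i)$, which gives $2k$ rather than $2k+1$ possible neighbours. One also has to note that $I(x,z)$ has length at most $k$, so that all of $S$ really lies inside $I_k(y_i)$.
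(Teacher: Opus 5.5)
Your proof is correct. It differs from the paper's only in how you produce a vertex with many non-neighbours in $S$.

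The paper observes that $G[S]$ is $K_{t-2}$-free, since a $K_{t-2}$ in $S$ together with $x,z$ would form a $K_t$. It then invokes Tur\'an's theorem to get a vertex of $S$ of degree at most $\frac{t-4}{t-3}m$ in $G[S]$, i.e.\ with at least $\frac{m}{t-3}-1$ non-neighbours in $S$.

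You instead use the $(t-3)$-clique $\{y_1,\dots,y_{t-3}\}\subseteq S$ that is already available. Each of the other $m-(t-3)$ vertices of $S$ misses some $y_j$, and averaging gives the same count $\frac{m}{t-3}-1$ for some $y_i$. This is precisely the standard proof of the minimum-degree form of Tur\'an's theorem, carried out inline. Your route is therefore more elementary and self-contained, and it also locates the low-degree vertex among the $y_j$, though that extra information is not needed later. The paper's version is shorter because it cites Tur\'an's theorem.

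One point in your favour: you explicitly check that these non-neighbours lie in $I_k(y_i)$, because $S\subseteq I(x,z)$ and $\abs{I(x,z)}\leq k+1$. The paper's phrase ``the maximum possible degree of a vertex in $G$ is $2k$'' silently relies on this locality, since non-neighbours outside the window would not reduce the degree.
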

    \begin{proofofclaim}
    Since $G$ does not contain a $K_t$ subgraph and $S$ is complete to $\{x, z\}$, it follows that $G[S]$ does not contain a $K_{t-2}$ subgraph. By Tur\'an's theorem, $\delta(G[S]) \leq \frac{t-4}{t-3} m$, so some vertex of $S$ has at least $\frac{m}{t-3} - 1$ non-neighbors in $S$. As the maximum possible degree of a vertex in $G$ is $2k$, it follows that $\delta \leq 2k - \left(\frac{m}{t-3} - 1\right)$. This proves \cref{cl1}. 
    \end{proofofclaim}
    
    \begin{claim}\label{cl2}
    $(2k+1 - \delta)(t-1) \geq 2k - m$.	
    \end{claim}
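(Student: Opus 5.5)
The plan is a double count of ``missing'' edges around the fixed clique $Y$, using that $G$ is $K_t$-free. Write $d:=\dist_C(x,z)$, so that $\abs{I(x,z)}=d+1$. Every vertex $y_i$ of $Y$ lies in $I(x,z)$. Hence the region
\[W:=I_k(x)\cap I_k(z)=\bigcap_{i=0}^{t-2}I_k(y_i)\]
is an interval of $2k+1-d$ vertices that lies in the $C_n^k$-neighbourhood of every $y_i$. Inside $I_k(y_i)$, the vertex $y_i$ has at most $2k-\delta$ non-neighbours other than itself. So the number of pairs $(w,y_i)$ with $w\in W\setminus V(Y)$ and $wy_i\notin E(G)$ is at most $(t-1)(2k-\delta)$.

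Next I use $K_t$-freeness. A vertex $w\in W\setminus V(Y)$ adjacent to all $t-1$ vertices of $Y$ would extend $Y$ to a $K_t$. Hence every such $w$ contributes at least one missing pair, giving
\[\abs{W}-(t-1)\le (t-1)(2k-\delta),\qquad\text{that is,}\qquad (t-1)(2k+1-\delta)\ge 2k+1-d.\]
This already gives \cref{cl2} whenever $d\le m+1$, that is, whenever $I(x,z)$ consists essentially of $x$, $z$ and $S$.

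The remaining work is to handle the $d-1-m$ vertices of $I(x,z)$ that lie outside $S\cup\{x,z\}$. For these I would sharpen the count, using the fact that $xz$ is the \emph{shortest} edge that is the longest edge of a $K_{t-1}$. Take $w\in I(x,z)\setminus(S\cup\{x,z\})$ adjacent to both $x$ and $z$. Since $w\notin S$, the set $\{x,z,w\}$ does not extend to a $K_{t-1}$ with longest edge $xz$. I would try to show that such a $w$ misses at least two of the $y_i$. Alternatively, I would show that its non-adjacencies can be charged to the part of $I_k(x)\cup I_k(z)$ outside $W$, which has $2d$ vertices and was not used above. Minimality of $xz$ should enter here: the edges $xy_{t-3}$ and $y_1z$ are shorter, so any $K_{t-1}$ having one of them as longest edge would contradict the choice of $xz$. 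This restricts how vertices between $x$ and $z$ can attach to $Y\setminus\{x\}$ or $Y\setminus\{z\}$. Combining the extra missing pairs with the count over $W$ should replace $2k+1-d$ by $2k-m$.

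I expect this last step, converting the interior vertices of $I(x,z)\setminus S$ into extra missing edges, to be the main obstacle; the count over $W$ is routine. If the direct charging fails, my first fallback is to run the same count for $x$ and $z$ separately over their full intervals $I_k(x)$ and $I_k(z)$. In those intervals each non-$S$ interior vertex must be a non-neighbour of $x$ or of $z$, or else it must break the clique extension. The ``$-m$'' would then come from the fact that only the vertices of $S$ can be fully joined to $\{x,z\}$.
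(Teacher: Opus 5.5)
Your count over $W=I_k(x)\cap I_k(z)$ is the paper's first step. The bookkeeping differs slightly: you drop $V(Y)$ and bound non-neighbours by $2k-\delta$, while the paper keeps each $y_i$ as its own non-neighbour and uses $2k+1-\delta$; the two are equivalent. Your plan for the other $d-1-m$ vertices is also exactly the paper's: show that each $w\in I(x,z)\setminus(S\cup\{x,z\})$ has at least two non-neighbours in $Y$. However, you never prove this. You handle only $w$ adjacent to both $x$ and $z$, say you ``would try to show'' the conclusion, gesture at minimality for the remaining case, and add fallbacks. This step is the whole content of the claim beyond the routine count, so as written the claim is not established.

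The missing argument is short. Take $w\in I(x,z)\setminus(S\cup\{x,z\})$; then $w\notin V(Y)$, because $V(Y)\setminus\{x,z\}\subseteq S$. By $K_t$-freeness, $w$ has a non-neighbour in $Y$. Suppose it has exactly one, say $u$.
\begin{itemize}
\item If $u=y_i$ with $1\le i\le t-3$, then $(V(Y)\setminus\{y_i\})\cup\{w\}$ is a $K_{t-1}$ inside $I(x,z)$ containing $x$ and $z$. Its longest edge is therefore $xz$, so $w\in S$, a contradiction. This is the case you set up.
\item If $u=x$, then $(V(Y)\setminus\{x\})\cup\{w\}$ is a $K_{t-1}$ with all vertices in $I(x,z)\setminus\{x\}$. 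Its longest edge is therefore strictly shorter than $xz$, contradicting the choice of $xz$. The case $u=z$ is symmetric.
\end{itemize}
In the second case the relevant edge joins the extreme vertices of the new clique, which may be $wz$ (or $xw$). So phrasing minimality through the specific edges $xy_{t-3}$ and $y_1z$, as you do, is slightly off; what matters is only that every vertex lies in $I(x,z)$ minus an endpoint.

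Each such $w$ therefore contributes at least two missing pairs, and your inequality becomes
\[
(t-1)(2k-\delta)\ \ge\ (2k+1-d)-(t-1)+(d-1-m),
\]
which is exactly $(t-1)(2k+1-\delta)\ge 2k-m$. The charging to $(I_k(x)\cup I_k(z))\setminus W$ and your fallback are not needed.
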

    \begin{proofofclaim}
    	For every vertex $y_i$, we want to count the number of non-neighbors of $y_i$ in $I_k(y_i)$. There are $2k+1$ vertices in $I_k(y_i)$ and $y_i$ is adjacent to at least $\delta$ of them, so $y_i$ has at most $2k+1 - \delta$ non-neighbors in $I_k(y_i)$. Summing over every vertex $y_i$, we can write this as the following: 
    	\begin{equation}\label{eq1}
    	(2k+1 - \delta)(t-1) \geq \sum_{i=0}^{t-2} \abs{I_k(y_i) \setminus N(y_i)}.
        \end{equation}
    	
    Observe that $v \in I_k(y_i)$ if and only if $y_i \in I_k(v)$. Therefore, instead of counting the non-neighbors of every vertex $y_i$ in $I_k(y_i)$, we can consider each vertex $v$ of $G$ and count the non-neighbors of $v$ among the vertices $y_i$ that appear in $I_k(v)$. Namely,
    \begin{equation*}
    		\sum_{i=0}^{t-2} |I_k(y_i) \setminus N(y_i)| = \sum_{v \in V(G)} |(I_k(v) \cap Y) \setminus N(v)|\,.
    \end{equation*}
    
    	Which vertices of $G$ are guaranteed to have nearby non-neighbors in $Y$? We will restrict our attention to vertices in $I_k(x) \cap I_k(z)$, observing that this still gives a lower bound: 
    	
    	\begin{equation}\label{eq2}
    		\sum_{i=0}^{t-2} |I_k(y_i) \setminus N(y_i)| \geq \sum_{v \in I_k(x) \cap I_k(z)} |(I_k(v) \cap Y) \setminus N(v)|\,.
    \end{equation}
    
    	Let $v$ be a vertex of $I_k(x) \cap I_k(z)$. Since $xz \in E(G)$, it follows that $I(x, z) \subseteq I_k(v)$, so in particular $Y \subseteq I_k(v)$.
        Note that $v$ is not adjacent to every vertex of $Y$.
        If $v \in Y$, this is evident as $v$ is not adjacent to itself.
        If $v$ is not in $Y$, it cannot be adjacent to all of $Y$, since in that case $\{v\} \cup Y$ would induce a copy of $K_t$ in $G$. Therefore, $| (I_k(v) \cap Y) \setminus N(v)| \geq 1$ for every vertex $v \in I_k(x) \cap I_k(z)$.
    	
    	Since $I(x, z) \subseteq I_k(x) \cap I_k(z)$, the previous statement holds for every vertex of $I(x, z)$. Next, we claim that the vertices of $I(x, z) \setminus \{x, z\}$ for which this lower bound is tight are in $S$. Suppose that $v$ is a vertex of $I(x, z) \setminus \{x, z\}$ such that $|(I_k(v) \cap Y) \setminus N(v)| = 1$. If the non-neighbor of $v$ in $Y$ is $x$, then $(Y \setminus \{x\}) \cup \{v\}$ is a $K_{t-1}$ subgraph of $G$ with longest edge strictly contained in $I(x, z)$, contradicting that $xz$ is the shortest longest edge of a $K_{t-1}$ subgraph of $G$. Therefore, $v$ is adjacent to $x$. Similarly, $v$ is adjacent to $z$, so the non-neighbor of $v$ in $Y$ is $y_i$ for some $1 \leq i \leq t-3$. Now, $(Y \setminus \{y_i\}) \cup \{v\}$ is a $K_{t-1}$ subgraph of $G$ with longest edge $xz$, so $v \in S$, as desired. It follows that $|(I_k(v) \cap Y) \setminus N(v)| \geq 2$ for every vertex~$v$ of $I(x, z) \setminus (S \cup \{x, z\})$.  
    	
    	We have thus far argued that 
    	\begin{equation*}
    		\sum_{v \in I_k(z) \cap I_k(x)} |(I_k(v) \cap Y) \setminus N(v)| \geq |I_k(x) \cap I_k(z)| + |I(x, z) \setminus (S \cup \{x, z\})|\,.
    	\end{equation*}
    
    	Since there are $x + k - (z - k) + 1$ vertices in $I_k(x) \cap I_k(z)$ and $(z - x - m - 1)$ vertices in $I(x, z) \setminus (S \cup \{x, z\})$, we obtain that
    	\begin{equation}\label{eq3}
    		\sum_{v \in I_k(x) \cap I_k(z)} |(I_k(v) \cap Y) \setminus N(v)| \geq 2k - m\,.
    	\end{equation}
    
    	Combining equations \eqref{eq1}, \eqref{eq2}, and \eqref{eq3}, we get 
    	$$(2k+1 - \delta)(t-1) \geq 2k-m \,.$$  
    	This proves \cref{cl2}.  
    \end{proofofclaim}
    
    Finally, we can combine the inequalities from \cref{cl1} and \cref{cl2} to obtain
    \begin{equation*}
        2k-(2k+1-\delta)(t-1) \leq m \leq (t-3)(2k+1-\delta)\,.
    \end{equation*}
    For $x = 2k+1-\delta$, this gives 
    \begin{align*}
          && 2k - x(t-1)      &\leq (t-3)x \\
    \iff  && 2k               &\leq (t-3)x + x(t-1) = 2x(t-2) \\
    \iff  && \frac{k}{t-2}    &\leq x  \\
    \iff  && \delta           &\leq 2k+1 - \frac{k}{t-2}\, ,
    \end{align*}
    as desired.
    This completes the proof.
\end{proof}

\section{Tur\'an-type thresholds for clique containment}\label{sec:turan-clique-proof}

In this section, we prove \cref{prop:turan-K3} on the maximum number of edges in a $K_t$-free subgraph of $C_n^k$.

\subsection{Lower bounds}\label{sec:turan-lower}

We begin by proving the lower bound in \cref{prop:turan-K3}. The construction is similar to that in \cref{sec:dirac-clique-proof} for the Dirac-type problem; in fact, this construction also satisfies the relevant minimum-degree condition.

\begin{proof}[Proof of \cref{prop:turan-K3}, lower bound]

    Let $\ell\in\{1,\ldots,k\}$ be a parameter. 
    Since for fixed $k$ we are concerned only with large-$n$ asymptotics, we may assume that $\ell\mid n$.
    
    Construct a subgraph $G$ of $C_n^k$ as follows:
    \begin{enumerate}[(1)]
        \item Begin with the $(t-2)$-power of an $n/\ell$-cycle.
        \item Blow up each vertex into an interval of length $\ell$, obtaining a graph on vertex set $\ZZ/n\ZZ$.
        \item Remove all edges which do not appear in $C_n^k$.
    \end{enumerate}
    For any choice of $\ell\leq\frac n{2t-2}$, the graph obtained in step (1) of the above procedure is $K_t$-free, and thus $G$ is $K_t$-free. 
    It remains to choose the value of $\ell$ for which $G$ has the largest number of edges. 

    The graph obtained in step (2) of the above procedure is regular of degree $2(t-2)\ell$. 
    We will select $\ell$ between $k/(t-1)$ and $k/(t-2)$, so that only the edges of $G$ corresponding to~$E(C^{t-2}_{n/\ell})\setminus E(C^{t-3}_{n/\ell})$ may be removed in step (3). Consider an edge of length $t-2$ in the graph resulting from step (1), and let the intervals of $G$ to which its incident vertices correspond be $[0,\ell)$ and $[(t-2)\ell,(t-1)\ell-1)$. The corresponding edges removed in step (3) are exactly those edges connecting vertices $i$ and $(t-2)\ell+j$ for $0\leq i,j<\ell$ for which $j-i>k-(t-2)\ell$. Therefore, the number of such edges removed is exactly $\binom{(t-1)\ell-k}2$. We conclude that
    \begin{equation}\label{eq:edge-ct}
    e(G)=n(t-2)\ell-\frac{n}\ell\binom{(t-1)\ell-k}2.
    \end{equation}
    Now, set $\beta=((t-2)^2+1)^{-1/2}$ and let $\ell=\floor{\beta k}$.\footnote{This choice of $\ell$ is not always optimal given $k$, but it is within an additive constant of the optimal choice. If one wants to compute the maximum number of edges in a $K_t$-free graph of $C_n^k$ for some \emph{fixed} $k$, one should simply optimize \eqref{eq:edge-ct} over all valid $\ell$.} We claim that the construction given above for this choice of $\ell$ gives the lower bound of \cref{prop:turan-K3}. Indeed, for fixed $t$ as $k$ grows, we have by \eqref{eq:edge-ct} that
    \begin{align*}
    \frac{e(G)}{e(C_n^k)}
    &=\frac1k\left((t-2)\ell-\frac1\ell\binom{(t-1)\ell-k}2\right)\\
    &=(t-2)\beta-\frac1{2\beta}((t-1)\beta-1)^2+O\left(\frac1k\right)\\
    &=t-1-\sqrt{(t-2)^2+1}+O\left(\frac1k\right)\,,
    \end{align*}
    where the last equality follows from a routine, but somewhat tedious, calculation.
\end{proof}

\subsection{An upper bound for triangles}\label{sec:turan-upper}

Our approach towards finding upper bounds is based on the following linear programming problem.

\begin{question}\label{qn:turan-K3-linprog} 
    Let $f\colon\{1,\ldots,k\}\to [0,1]$ be such that, for every multiset $A\subset\{0,1,\ldots,k\}$,
    \begin{equation}\label{eq:lp-A}
    \sum_{\substack{i,j\in A\\i<j}}f(j-i)\leq\floor*{\frac{\abs{A}^2}4}\,.
    \end{equation}
    What is the maximum possible value of $\frac1k\sum_{x=1}^kf(x)$?
\end{question}

\noindent Write $\rho_k$ for the answer to \cref{qn:turan-K3-linprog}.

We connect this question to \cref{prop:turan-K3} via the following lemma, whose proof applies Mantel's theorem.

\begin{lemma}\label{lem:K3-linprog-connection} Every subgraph $G$ of $C_n^k$ without any triangles satisfies $\frac{e(G)}{e(C_n^k)}\leq \rho_k$.
\end{lemma}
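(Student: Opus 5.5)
The plan is an averaging argument. We turn $G$ into a feasible solution $f$ of \cref{qn:turan-K3-linprog} whose objective value is exactly $e(G)/e(C_n^k)$. Since $\rho_k$ is the maximum over feasible $f$, the lemma follows. Identify $V(C_n)$ with $\ZZ/n\ZZ$ and assume $n>2k+1$, so that each edge of $C_n^k$ has a well-defined length $d\in\{1,\dots,k\}$ and there are exactly $n$ edges of each length. (The small-$n$ case only matters up to the $\limsup$, or can be handled separately.) For $1\le d\le k$, let $f(d)$ be the fraction of the $n$ pairs $\{u,u+d\}$ that are edges of $G$. Then $f(d)\in[0,1]$ and
\[\frac1k\sum_{d=1}^k f(d)=\frac{\sum_d n f(d)}{nk}=\frac{e(G)}{e(C_n^k)}.\]
So it remains to check the constraint \eqref{eq:lp-A}.

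Fix a multiset $A\subset\{0,\dots,k\}$. For each shift $u\in\ZZ/n\ZZ$, consider the multiset $u+A$ and the auxiliary graph $G_u$ on the index set of $A$. In $G_u$, two indices $a<b$ (as elements of $A$, with $b-a\ge1$) are adjacent iff $\{u+a,u+b\}\in E(G)$. Note that $b-a\le k$, so every such pair is a potential edge of $C_n^k$, and pairs with $a=b$ contribute nothing since $f$ is only defined on positive differences. We claim $G_u$ is triangle-free. A triangle on indices $a<b<c$ would give three distinct vertices $u+a,u+b,u+c$ of $C_n$ (distinct because $0<c-a\le k<n$) that are pairwise adjacent in $G$, which is impossible. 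Repeated elements of the multiset never form edges with each other, so they do not create spurious triangles. By Mantel's theorem, $G_u$ has at most $\floor{\abs{A}^2/4}$ edges.

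Now average over $u$. We have
\[\frac1n\sum_{u}e(G_u)=\sum_{\substack{i,j\in A\\ i<j}}\frac1n\#\{u:\{u+i,u+j\}\in E(G)\}=\sum_{\substack{i,j\in A\\ i<j}}f(j-i),\]
because for fixed $i<j$ the pair $\{u+i,u+j\}$ runs over all $n$ pairs at distance $j-i$ exactly once as $u$ varies. Since each term on the left is at most $\floor{\abs{A}^2/4}$, this gives \eqref{eq:lp-A}.

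The only delicate points are bookkeeping. First, the multiset convention: equal elements give $j-i=0$ and are excluded, which matches the sum over $i<j$ with $f$ supported on $\{1,\dots,k\}$. Second, ensuring $n$ is large enough that the pairs at distance $d$ are counted exactly $n$ times and $u+a,u+b,u+c$ are distinct. I expect no step to be a real obstacle; the key observation is simply that translates of $A$ stay within a window of width $k$, so every pair in a translate is an edge of $C_n^k$ and Mantel applies.
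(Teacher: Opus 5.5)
Your proposal is correct and follows essentially the same argument as the paper. Both define $f(d)$ as the proportion of distance-$d$ pairs that are edges of $G$, apply Mantel's theorem to the graph induced on each translate $u+A$ (with repeated elements of $A$ treated as mutually non-adjacent copies), and average over $u$. Your explicit restriction to large $n$, so that $e(C_n^k)=nk$ and the $n$ pairs of each length are distinct, is a bookkeeping point the paper leaves implicit. It is harmless, since only large $n$ matters for $\pi_{\mathrm{cyc}}$.
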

\begin{proof} Identify the vertices of $C_n^k$ with $\ZZ/n\ZZ$ in the natural way.
We define a function $f\colon\{1,\ldots,k\}\to [0,1]$ such that $f(i)$ is the proportion of edges $(x,x+i)$ which are contained in $E(G)$. 

We claim that, for any $A\subset\{0,1,\ldots,k\}$, the bound \eqref{eq:lp-A} holds. 
Indeed, consider for each $x\in\ZZ/n\ZZ$ the subgraph of $G$ induced by the vertex set $x+A$. (If $A$ contains some element with multiplicity $m>1$, blow up the corresponding vertex into $m$ vertices.) 
This subgraph is a triangle-free graph on $\abs{A}$ vertices, and so Mantel's theorem implies that it has at most~$\floor{\abs{A}^2/4}$ edges. 
Averaging this bound over $x\in\ZZ/n\ZZ$ gives \eqref{eq:lp-A}.
The result then follows from the fact that $\sum_{x=1}^knf(x)$ is the number of edges of $G$.  
\end{proof}

We believe that the reduction to a linear program described by \cref{qn:turan-K3-linprog,lem:K3-linprog-connection} is tight, in the sense that the function $f$ arising from the construction in \cref{sec:turan-lower} is exactly the optimizer sought in \cref{qn:turan-K3-linprog}. We have verified this computationally for all $k\leq 100$. This function $f$ is roughly given by
\[f(x)=\min\left(\frac{x\sqrt 2}k,2-\frac{x\sqrt2}k\right).\]

We are unable to prove that such a function $f$ indeed answers \cref{qn:turan-K3-linprog}. We will instead use \cref{lem:K3-linprog-connection} to provide the weaker upper bound on $\pi_{\mathrm{cyc}}(K_3)$ claimed in \cref{prop:turan-K3}. This upper bound is a corollary of the following lemma. Its proof uses only those conditions~\eqref{eq:lp-A} with $\abs{A}\leq5$; this is the best bound which can be obtained with those conditions only.

\begin{lemma}\label{lem:rho-computation}
    As $k\to\infty$, we have $\rho_k\leq 3/5+o(1)$.
\end{lemma}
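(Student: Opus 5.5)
The plan is to exhibit an explicit dual certificate for the linear program of \cref{qn:turan-K3-linprog}. Concretely, I would take a nonnegative combination of the constraints \eqref{eq:lp-A}, using only multisets $A$ with $\abs{A}\le 5$, such that in the combined inequality every $f(x)$ carries a coefficient of at least $1$, while the right-hand side is at most $(3/5+o(1))k$.

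To make this tractable, I would pass to a continuous limit. Write $f(x)=g(x/k)$ with $g\colon[0,1]\to[0,1]$. Averaging a constraint over all integer placements of a fixed shape turns it into an integral inequality of the form
\[
\int_0^1 w(u)g(u)\,du\le c,
\]
up to $O(1/k)$ errors. These rounding errors are harmless because $f$ is bounded.

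The families I would average over are the following.
\begin{enumerate}[(i)]
\item The triangle constraints $f(a)+f(b)+f(a+b)\le 2$ over all $a,b\ge1$ with $a+b\le k$. A quick count gives $\sum_x f(x)\bigl(2(k-x)+x\bigr)\lesssim k^2$, so the weight is $w_3(u)=2-u$ with right-hand side $1$.
\item The four-point constraints on sets $\{0,a,a+b,a+b+c\}$: six pairwise differences with bound $4$. Averaging over all such sets with total span at most $k$ gives a quadratic weight profile $w_4$.
\item The five-point constraints (ten differences, bound $6$), restricted to suitable subfamilies such as arithmetic progressions $\{0,s,2s,3s,4s\}$. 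For these the constraint reads $4g(s)+3g(2s)+2g(3s)+g(4s)\le 6$, and integrating over $s\in[0,1/4]$ after changing variables gives a weight profile supported on differing ranges.
\item The multiset constraints $\{0,0,x,y,y\}$ and the like, which mimic blown-up $C_5$ configurations and give further localized profiles.
\item The trivial bounds $g\le 1$ on chosen subintervals.
\end{enumerate}
Since the extremal value $3/5$ is attained exactly when only $\abs{A}\le5$ is used, the primal optimizer is presumably a piecewise-constant $g$ with breakpoints at multiples of $1/5$. This is a $C_5$-blowup-type structure in which the $5$-point constraints with spacings in $[1/5,2/5]$ are tight. Complementary slackness then tells me which families (and which ranges of parameters) must appear in the certificate with positive weight.

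The steps in order are as follows. First, guess the primal optimizer $g^\ast$ with $\int g^\ast=3/5$ satisfying all constraints with $\abs{A}\le5$; I would find it by solving the discretized linear program numerically for moderate $k$, much as the authors did for $k\le100$. Second, read off the tight constraints and choose parametrized families of them, with densities $\lambda_i$ on their parameter spaces. Third, compute the resulting weight functions $w_i(u)$ and verify two things: that $\sum_i\lambda_i w_i(u)+\mu(u)\ge1$ pointwise on $[0,1]$, where $\mu\ge0$ is the multiplier for $g\le1$; and that the total right-hand side equals $3/5$. Fourth, transfer back to integers by averaging the same families over integer parameters, absorbing boundary and rounding effects into $o(1)$.

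I expect the main obstacle to be the first two steps, namely finding the correct families and densities. The weights are piecewise polynomial, so verifying pointwise domination is a finite case check on the intervals $[j/5,(j+1)/5]$. Choosing the certificate, however, requires identifying the right five-point shapes, and these will likely be non-uniformly spaced multisets adapted to the breakpoints of $g^\ast$. My first attempt would be a direct numerical dual solve at large $k$, followed by pattern-matching the dual support into continuous families.
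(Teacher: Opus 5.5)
Your duality framework is the right one; the paper's argument is also a weak-duality certificate built from constraints with $\abs{A}=5$. The gap is that you never produce a certificate, and producing one is the entire content of the lemma.

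Your steps one to three are a search procedure: solve the LP numerically, read off tight constraints, pattern-match into continuous families. Nothing shows that this search succeeds, i.e., that the numerical duals organize into explicit families whose weight profiles dominate $1$ pointwise at total cost $(3/5+o(1))k$.

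The heuristics you propose to guide the search also point the wrong way. The constant function $f\equiv 3/5$ satisfies every constraint \eqref{eq:lp-A} with $\abs{A}\le 5$: it gives $9/5\le 2$, $18/5\le 4$ and $6\le 6$. So it is already a primal optimizer, and no $C_5$-blowup profile with breakpoints at multiples of $1/5$ is involved.

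Evaluating your combined inequality at this $f$ shows that any constraint which is slack at $f\equiv 3/5$ can carry only $o(k)$ total dual weight. This rules out, as contributors of non-negligible weight, the triangle family (i), the four-point family (ii), the repeated-entry multisets in (iv), and the trivial bounds (v). What remains is forced. Up to lower-order errors, the certificate must be a measure of total mass about $k/10$ on $5$-point sets with distinct entries, whose ten pairwise differences cover every $x\in\{1,\dots,k\}$ with weight essentially exactly $1$. In other words, you need a random $5$-tuple whose pairwise differences are (nearly) uniformly distributed on $\{-k,\dots,k\}$. Arithmetic progressions give a very non-uniform profile, and it is not clear a priori that such a distribution exists at all.

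Supplying such distributions is the missing idea, and it is exactly what the paper does.
\begin{itemize}
\item It lets $K$ be the set of $k$ for which some distribution on $\ZZ^5$ makes every $X_i-X_j$ ($i\neq j$) exactly uniform on $\{-k,\dots,k\}$. Averaging the $\abs{A}=5$ constraint over such a distribution gives $\rho_k\le 3/5+3/(10k)$.
\item It exhibits $2\in K$ (a random permutation of $(0,0,1,2,2)$) and $4\in K$ (an explicit mixture).
\item It shows that $\{2k+1:k\in K\}$ is closed under multiplication, via $X_i=(2k_2+1)Y_i+Z_i$ with independent factors.
\item It passes to all $k$ using the multiplicative density of $\{5^a9^b\}$ (from the irrationality of $\log 9/\log 5$) together with $\abs{\rho_k-\rho_{(1+\eta)k}}\le\eta$.
\end{itemize}
Until you give an explicit family of this kind, your proposal does not establish the lemma.
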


\begin{proof} 
    Recall that $\rho_k$ denotes the answer to \cref{qn:turan-K3-linprog}.
    We observe that ${\abs{\rho_k-\rho_{(1+\eta)k}}\leq \eta}$ for any $\eta>0$. Indeed, if $f_k$ attains the maximum value $\rho_k$, we can extend $f_k$ by zeros to~$\{1,\ldots,(1+\eta)k\}$ to obtain
    \[\rho_{(1+\eta)k}\geq\frac1{1+\eta}\rho_k\geq\rho_k(1-\eta)\geq \rho_k-\eta.\]
    On the other hand, if $f_{k'}$ attains the maximum value $\rho_{(1+\eta)k}$, we can restrict $f_{k'}$ to $\{1,\ldots,k\}$ to obtain
    \[\rho_k\geq\frac1k\sum_{x=1}^kf_{k'}(x)\geq\frac1k\left(\rho_{(1+\eta)k}(1+\eta)k-\eta k\right)\geq\rho_{(1+\eta)k}-\eta.\]
    
    So, it suffices to control $\rho_k$ on a ``dense'' set of integers. We will show that there exists a set $K$ of positive integers which satisfies 
    \begin{enumerate}
        \item $\rho_k\leq 3/5+o(1)$ for $k\in K$; and,
        \item for every positive integer $m$, there exists some $k\in K$ with $k=m(1+o(1))$.
    \end{enumerate}
    (The ``$o(1)$'' expressions in (1) and (2) are as $k\to\infty$.) The result will follow.
    
    We define $K$ to be the set of positive integers $k$ for which the following holds: there exists some probability distribution $\mathcal D_k$ on $\ZZ^5$ for which, if $(X_1,\ldots,X_5)\sim\mathcal D_k$, then each $X_i-X_j$ (for $i\neq j$) has the uniform distribution on $[-k,k]$.
    
    \begin{claim}\label{cl:K-enough} If $k\in K$ then $\rho_k\leq\frac35+\frac3{10k}$.
    \end{claim}
    
    \begin{proofofclaim} 
    
    Let $\mathcal D_k$ be a distribution witnessing $k\in K$. Let $f\colon\{1,2,\ldots,k\}\to [0,1]$ satisfy \eqref{eq:lp-A}, and extend $f$ to a function $\{-k,\ldots,k\}\to[0,1]$ by $f(-i)=f(i)$ and $f(0)=0$. Now, select $A=\{X_1,\ldots,X_5\}$ with $(X_1,\ldots,X_5)\sim\mathcal D_k$. We have
    \[6=\floor*{\frac{5^2}4}\geq\EE\left[\frac12\sum_{u,v\in A}f(u-v)\right]=\frac12\sum_{1\leq i,j\leq 5}\EE[f(X_i-X_j)]=\frac{10}{2k+1}\sum_{x=-k}^kf(x)\,.\]
    Rearranging and using the evenness of $f$ gives the result. \qedhere
    
    \end{proofofclaim}
    
    By \cref{cl:K-enough}, the set $K$ satisfies property (1) above. We now show (2) by exhibiting some particular elements of $K$.
    
    \begin{claim}\label{cl:K-cl} The set $\{2k+1\colon k\in K\}$ is closed under multiplication.
    \end{claim}
    \begin{proofofclaim} Let $k_1,k_2\in K$ and let $\mathcal D_{k_1},\mathcal D_{k_2}$ be distributions witnessing $k_1\in K$ and $k_2\in K$, respectively. Define a distribution $(X_1,\ldots,X_5)\sim\mathcal D$ by taking $(Y_1,\ldots,Y_5)\sim\mathcal D_{k_1}$ and $(Z_1,\ldots,Z_5)\sim\mathcal D_{k_2}$ independently, and letting $X_i=(2k_2+1)Y_i+Z_i$. The result follows.    
    \end{proofofclaim}
    
    \begin{claim}\label{cl:K-base} We have $2,4\in K$.
    \end{claim}
    
    \begin{proofofclaim} 
    
    This is simply a computation. For $k=2$ we take $\mathcal D_2 = (X_1,\dots,X_5)$ to be a uniformly random permutation of $(0,0,1,2,2)$.\footnote{To explain the solution for $k=2$, note that there are $20$ ordered pairs of distinct elements in $\{-2,-1,0,1,2\}$.
    For each distance $d \in [0,4]$, there are $4$ pairs $i,j$ that give $d=|X_i - X_j|$.
    Hence the difference is uniform.}
    
    For $k=4$, we take $\mathcal D_4$ to be a mixture of the following:\footnote{The argument for $k=4$ is similar.
    Denote the three distributions from above by $(X_1^j,\dots,X_5^j)$ with $j \in [3]$.
    For a distance $d \in [0,4]$ and $j \in [3]$, let $P_d^j$ denote the number of distinct pairs~$i,j$ with $d=|X_i^j - X_j^j|$.
    For instance, $P_2^1 = 2$, $P_2^2 = 3$, and $P_2^3 = 0$ for $d=2$.
    So in particular $4 P_2^1 + 4 P_2^2 + P_2^3 =20$.
    Moreover, one can check that the same holds for the other values of~$d$.
    Hence the difference is uniform if the three distributions are combined in the proportion detailed above.}
    \begin{enumerate}[(1)]
        \item Four parts a uniformly random permutation of $(0,0,2,3,4)$.
        \item Four parts a uniformly random permutation of $(0,0,1,3,4)$.
        \item One part a uniformly random permutation of $(0,0,2,4,4)$. \qedhere
    \end{enumerate}

    \end{proofofclaim}
    
    We conclude from \cref{cl:K-cl} and \cref{cl:K-base} that $\frac{5^a9^b-1}2\in K$ for each $a,b\geq 0$. Condition~(2) above, and thus the result, follows from the following claim:
    
    \begin{claim}\label{cl:K-dense}
        For each positive integer $m$, let $u(m)$ be the nearest element of $K_0:=\{5^a9^b\colon a,b\in\ZZ_{\geq 0}\}$ to $m$. We have $\lim_{m\to\infty}\frac{u(m)}m=1$.
    \end{claim}
    
    \begin{proofofclaim}
    
        Since $5^a\neq 9^b$ for integers $a,b\geq 1$, we have that $\log9/\log5$ is irrational.
        Let $\eps\in(0,1/10)$ be arbitrary. We will show that $\abs{u(m)-m}\leq2\eps m$ for all sufficiently large $m$, which will be enough.
        
        By Dirichlet's approximation theorem, we can find positive integers $p$ and $q$ for which $0<\abs{q\log 9/\log 5-p}<\eps$ and $q<\eps^{-1}$. Letting $T=\ceil{\abs{q\log 9/\log5-p}^{-1}}$, we can find for each $r\in[0,1)$ some $0\leq t\leq T$ for which $qt\log9/\log5-r$ is within $\eps$ of an integer. Let $m>9^{qT}$ be any integer and let $r=\log m/\log 5$. We can find integers $0\leq t\leq T$ and $s\geq0$ for which
        \[\frac1{\log 5}\abs{qt\log 9-\log m+s\log 5}=\abs*{qt\frac{\log 9}{\log5}-\frac{\log m}{\log 5}+s}\leq\eps.\]
        Rearranging, we have
        \[\abs{qt\log 9+s\log 5-\log m}\leq \eps\log 5,\]
        and so $m/(9^{qt}5^s)$ lies in the interval $[5^{-\eps},5^\eps]\subset[1-2\eps,1+2\eps]$.
        \qedhere
    \end{proofofclaim}
    This concludes the proof.
\end{proof}

\noindent We remark that the proof of \cref{lem:rho-computation} can be made effective by appealing to Baker's theorem in transcendental number theory. Baker's theorem implies that, in the proof of \cref{cl:K-dense}, we have the bound $T=q^{O(1)}=\eps^{-O(1)}$. Carrying this bound through the rest of the proof gives that $\rho_k\leq 3/5+O((\log k)^{-C})$ for some absolute constant $C>0$.

\bibliographystyle{abbrv}
\bibliography{bib}

\appendix

\section{A counterexample to \texorpdfstring{\cref{conj:geometric}}{Conjecture 1.3} when \texorpdfstring{$d=2$}{d=2}}\label{app:counterexample}

We begin with the following geometric result.

\begin{prop}\label{prop:geometricquestion} 
    For small enough $r>0$, there exists a measurable subset $X$ of the unit torus~$(\RR/\ZZ)^2$ satisfying the following two properties:
    \begin{enumerate}
        \item $\mu(X)<0.499$.

        \item For every $x\in (\RR/\ZZ)^2\setminus X$,
        \[\frac{\mu(X\cap B(x,r))}{\mu(B(x,r))}>0.501.\]
    \end{enumerate}
\end{prop}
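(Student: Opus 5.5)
The plan is to take $X$ to be the complement of a periodic array of small disjoint disks, with the array fine enough to fit the torus and the radius $r$ comparable to the lattice spacing. Fix a large integer $N$ and put $s=1/N$. Let $Y$ be the union of the closed disks $D_p$ of radius $\rho=\lambda s$ centred at the points $p$ of the square lattice $s\ZZ^2$ (mod $1$). Set $X=(\RR/\ZZ)^2\setminus Y$ and $r=\kappa s$, where $\lambda<1/2$ and $\kappa$ are absolute constants still to be chosen.

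Since $N$ is arbitrary, this gives every sufficiently small $r$ of the form $\kappa/N$. For general small $r$, I would instead choose the spacing $s$ closest to $r/\kappa$ with $1/s\in\ZZ$. This perturbs $\kappa$ by a factor $1+O(r)$, and by continuity the strict inequalities below survive.

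Property (1) is a direct computation: $\mu(Y)=\pi\lambda^2$, so we need $\pi\lambda^2>0.501$, i.e.\ $\lambda>0.3994$. Property (2) is scale-invariant. After rescaling by $s$, it becomes a statement about the fixed planar configuration $Y_0=\bigcup_{p\in\ZZ^2}B(p,\lambda)$:
\[F(x):=\frac{\abs{Y_0\cap B(x,\kappa)}}{\pi\kappa^2}<0.499\quad\text{for every }x\in B(0,\lambda).\]
By the symmetries of the square lattice, it suffices to consider $x=(a,b)$ with $0\le b\le a$ and $a^2+b^2\le\lambda^2$.

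The heuristic for choosing parameters goes as follows. At the centre $x=0$, the ball $B(0,\kappa)$ contains the whole own disk, of area $\pi\lambda^2$, so we need $\kappa>\lambda/\sqrt{0.499}\approx1.416\lambda$. At the same time $\kappa$ must not be so large that the average density $\pi\lambda^2>0.5$ of $Y_0$ takes over. So the natural regime is $\kappa$ between roughly $1.42\lambda$ and something like $1-\lambda$ plus a little. In that range $B(x,\kappa)$ contains the own disk and only thin caps of the four (and possibly the diagonal) neighbouring disks. For example, take $\lambda=0.41$ and $\kappa\approx0.62$. Then at $x=0$ the own disk contributes about $0.437$ of the ball, and the neighbours at distance $1$ are not reached at all.

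As $x$ moves toward the boundary of its disk, the own contribution drops (part of $D_0$ leaves the ball) while caps of neighbours enter. The neighbour caps are lens areas $L(d)=\abs{B(0,\kappa)\cap B(q,\lambda)}$ with $d=\abs{x-q}\ge1-\lambda$. These have explicit closed forms, so $F$ is an explicit finite sum of lens-area functions.

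The main obstacle is certifying $\sup F<0.499$ over the compact region. I would first search numerically over $(\lambda,\kappa)$ for a choice with a comfortable margin, hoping for $\sup F\le0.49$. I would then make the check rigorous: $F$ is Lipschitz in $x$ with constant at most $\text{perimeter}(\partial Y_0\cap B(x,\kappa+1))/(\pi\kappa^2)$, which is explicitly bounded. So evaluating $F$ on a sufficiently fine finite grid of the fundamental triangle, together with this Lipschitz bound, proves the strict inequality everywhere.

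If the square lattice turns out to give too little margin, I would switch to a lattice closer to hexagonal, whose denser packing gives more room. I would approximate it by a lattice compatible with the torus $(\RR/\ZZ)^2$, such as the lattice generated by $(s,0)$ and $(s/2,ms)$ with $m$ rational and close to $\sqrt3/2$. The same continuity argument absorbs the approximation error.
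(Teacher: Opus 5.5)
There is a genuine gap: the two phases are the wrong way round. With $X$ the complement of the disks, your reduction is correct: you need $F<0.499$ at every point of the disk, boundary included. That condition fails on the boundary circle.

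Take your own parameters $\lambda=0.41$, $\kappa=0.62$ and the boundary point $x=(0.41,0)$. The ball $B(x,\kappa)$ meets the own disk in a lens of area $\approx0.396$. It meets the neighbour centred at $(1,0)$, which is at distance $0.59<\kappa$, in a lens of area $\approx0.250$. Hence $F(x)\approx0.646/(\pi\cdot 0.62^2)\approx0.535$. (Also, contrary to what you wrote, the neighbours are already reached from $x=0$, since $0.62+0.41>1$.)

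This is not a margin problem that a parameter search or a switch to a near-hexagonal lattice would cure. Some further values:
\begin{itemize}
\item Shrinking $\kappa$ to $0.585$, just above the threshold $\lambda/\sqrt{0.499}$ forced by the centre, still gives $F\approx0.544$ at $x=(0.41,0)$.
\item With $\lambda=0.4$ and larger $\kappa$, the diagonal boundary point $\lambda(1,1)/\sqrt2$ gives $F\approx0.533$ for $\kappa=0.8$ and $F\approx0.507$ for $\kappa=1$.
\item In the hexagonal lattice with $\lambda=0.38$, boundary points give $F$ between about $0.51$ and $0.56$ for $\kappa=0.55$ and $\kappa=0.7$.
\end{itemize}

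The obstruction is structural. The centre forces $\kappa>\sqrt2\,\lambda$, so from a boundary point the ball contains more than half of its own disk. Density above $1/2$ forces the gaps between disks to be thin compared with $\kappa$, so the neighbours contribute heavily.

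A heuristic way to see it: to first order in the lattice's dominant Fourier mode, $F\approx\theta(1+c\,\phi)$, where $\theta>1/2$ is the density and $\phi$ is the first-shell cosine sum. The requirement $F(0)<\theta$ forces $c<0$. Then $F>\theta$ wherever $\phi<0$, and the boundary of a disk of density above $1/2$ enters that region. In the square lattice it does so along the diagonals; in the hexagonal lattice it does so all the way around.

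The fix, which is what the paper does, is to swap the roles. Let $X$ itself be the union of the disks, with density slightly \emph{below} $1/2$, and require that points in the gaps see more than $0.501$ of disk. The boundary phenomenon above now works in your favour: points of $\partial X$ see roughly $0.53$ of disk.

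Concretely, the paper uses the hexagonal lattice $\ell\Lambda$ with radius $a=0.37\ell$ (density $\approx0.4966$) and $r=0.7\ell$. It certifies the inequality on the gaps exactly by your proposed method. The ratio is $2/(\pi r)$-Lipschitz, so finitely many explicit evaluations on a fundamental triangle of the symmetry group suffice.

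The paper avoids rational approximation of $\sqrt3/2$ when fitting the lattice to the torus. It keeps only the lattice disks inside $Q=[0.0005,0.9995]^2$ and adds the strip $(\RR/\ZZ)^2\setminus Q$ to $X$. Enlarging $X$ can only help property (2), and it costs only about $0.002$ in measure. Your rational-lattice perturbation should also work once the orientation is corrected, since the margins in property (2) are about $0.018$.
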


The construction underlying \cref{prop:geometricquestion} was suggested by ChatGPT 5.5 Pro, and the example we present in the following pages is a refinement of that suggestion. The verification that it satisfies \cref{prop:geometricquestion} is due to the authors.
We remark that we were unable to determine any prior results or sources on which ChatGPT 5.5 Pro's underlying reasoning may have relied.

Before proving \cref{prop:geometricquestion}, we use it to demonstrate the falsity of \cref{conj:geometric} for $d=2$.

\begin{proof}[Proof that \cref{conj:geometric} fails for $d=2$]
    We will show that, for some small constant $r$, the random geometric graph $T_2(n,r)$ has, with high probability, a $0.5001$-subgraph which is not Hamiltonian. 

    Let $n$ be a large even integer. 
    Sample $G=(V,E)$ according to $T_2(n,r)$, and let $X$ be as in \cref{prop:geometricquestion}.
    Since $\mu(X)<0.499$, we have
    \begin{equation}\label{eq:V-X-small}
        \abs{V\cap X}<\frac12\abs{V}=n/2
    \end{equation}
    with high probability.
    Now, construct a subgraph $H\leq G$ in which those edges without an endpoint in $X$ are removed. The set $V\setminus (V\cap X)$ is an independent set in $H$ which, by \eqref{eq:V-X-small}, contains at least $n/2+1$ vertices. We conclude that $H$ has no perfect matching; in particular $H$ is not Hamiltonian.

    We claim, however, that (with high probability) $\deg_H(v)>0.5001\deg_G(v)$ for each vertex $v\in V$. Indeed, if $v\in X$, then $\deg_H(v)=\deg_G(v)$, so there is nothing to prove. If $v\not\in X$, the neighborhood of $v$ in $H$ is described exactly by
    \[N_H(v)=V\cap X\cap B(v,r).\]
    By (2) of \cref{prop:geometricquestion}, we have $\mu(X\cap B(v,r))>0.501(\pi r^2)$. As $r$ is constant and $n$ is growing, the counts $V\cap B(v,r)$ and $V\cap (X\cap B(v,r))$ thus concentrate heavily around their means. In particular, for $n$ sufficiently large, we have $\abs{V\cap (X\cap B(v,r))}\geq 0.5001\abs{V\cap B(v,r))}$ with probability at least $1-1/n^2$. Taking a union bound concludes the proof.
\end{proof}

The main engine behind the proof of \cref{prop:geometricquestion} is the following technical lemma which provides a set in $\RR^2$ satisfying (essentially) the same two properties.

\begin{figure}
\begin{tikzpicture}[scale=5]

\def\rsmall{0.37}
\def\rbig{0.7}
\def\cx{0.45}
\def\cy{0.1}

\pgfmathsetmacro{\rtthree}{sqrt(3)}

\clip (-0.45,-0.7) rectangle (1.4,0.9);

\coordinate (A) at (0,0);
\coordinate (B) at (0.5,0);
\coordinate (C) at (0.5,{sqrt(3)/6});

\begin{scope}
    \clip (A) -- (B) -- (C) -- cycle;
    \fill[gray!60] (-1,-1) rectangle (2,2);
\end{scope}

\fill (\cx,\cy) circle (0.01);
\node at (\cx,\cy) [below] {$x$};

\foreach \i in {-3,...,3} {
    \foreach \j in {-3,...,3} {
        \pgfmathsetmacro{\x}{\i + 0.5*\j}
        \pgfmathsetmacro{\y}{0.5*\rtthree*\j}
        \fill[gray!35] (\x,\y) circle (\rsmall);
        \draw (\x,\y) circle (\rsmall);
    }
}

\draw[thick] (\cx,\cy) circle (\rbig);

\node at (0.22,0.05) {$T$};
\node at (0,-0.16) {$X$};

\foreach \k in {-3,...,3} {
    \pgfmathsetmacro{\y}{0.5*\rtthree*\k}
    \draw[gray!55] (-2,\y) -- (3,\y);
}

\foreach \k in {-3,...,3} {
    \draw[gray!55]
        (-2,{\rtthree*(-2-\k)})
        --
        (3,{\rtthree*(3-\k)});
}

\foreach \k in {-3,...,3} {
    \draw[gray!55]
        (-2,{-\rtthree*(-2-\k)})
        --
        (3,{-\rtthree*(3-\k)});
}

\draw[thick] (A) -- (B) -- (C) -- cycle;

\end{tikzpicture}
\caption{The set $X$ is shaded in light gray and the set $T\setminus X$ is shaded in dark gray. A circle of radius $r=0.7$ is centered at the point $x=(0.45,0.1)\in T\setminus X$.}
\label{fig:triangular-lattice}
\end{figure}

\begin{lemma}\label{lem:circle-inter}
    Set $a=0.37$ and $r=0.7$.
    Let $\Lambda$ be the triangular lattice~$\ZZ(1,0)+\ZZ(1/2,\sqrt3/2)$, and let
    \[X=\bigsqcup_{v\in\Lambda}B(v,a)\]
    be a union of circles of radius $a$. 
    For any $x\not\in X$, the area of the intersection $X\cap B(x,r)$ is at least $0.501(\pi r^2)$.
\end{lemma}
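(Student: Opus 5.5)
We reduce to a compact parameter region and then bound the intersection area by an explicit sum of lens areas. Note first that the disks $B(v,a)$ are pairwise disjoint, since $2a=0.74<1$. By the symmetry of $\Lambda$ (translations, rotations by $\pi/3$, reflections), it suffices to treat $x$ in the fundamental triangle $T$ with vertices $(0,0)$, $(1/2,0)$, $(1/2,\sqrt3/6)$ shown in \cref{fig:triangular-lattice}, subject to $|x|\ge a$. Then
\[
\mu\bigl(X\cap B(x,r)\bigr)=\sum_{v\in\Lambda} L\bigl(|x-v|\bigr),
\]
where $L(\rho)$ denotes the area of the intersection of two disks of radii $a$ and $r$ whose centres are at distance $\rho$. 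This lens area has the standard closed form
\[
L(\rho)=a^2\arccos\frac{\rho^2+a^2-r^2}{2\rho a}+r^2\arccos\frac{\rho^2+r^2-a^2}{2\rho r}-\tfrac12\sqrt{(-\rho+a+r)(\rho+a-r)(\rho-a+r)(\rho+a+r)}
\]
for $r-a\le\rho\le r+a$. In addition, $L(\rho)=\pi a^2$ when $\rho\le r-a=0.33$, and $L(\rho)=0$ when $\rho\ge r+a=1.07$. Only lattice points within distance $1.07$ of $x$ contribute, so for $x\in T$ this is a short explicit list: $(0,0)$, $(1,0)$, $(1/2,\pm\sqrt3/2)$, and a few others.

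The task is therefore to show that
\[
F(x)=\sum_v L\bigl(|x-v|\bigr)\ \ge\ 0.501\,\pi r^2\approx 0.7713
\]
on the compact region $T\setminus B(0,a)$. Heuristically, the global density of $X$ is $\pi a^2/(\sqrt3/2)\approx0.4966$, which is below $1/2$. So the claim says that points outside $X$ see locally more of $X$, and this must come from the nearby disks. The worst case should be where $x$ is as far as possible from all disk centres, namely the deep hole $(1/2,\sqrt3/6)$ at distance $1/\sqrt3\approx0.577$ from three centres. At that point the three lenses should already give more than $0.77$. The other candidate is the boundary point $x=(a,0)$, which is close to one centre but farther from the rest.

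To make the argument rigorous, we bound the Lipschitz constant of $F$. The derivative of $L$ in $\rho$ equals minus the length of the chord in which the two circles meet, so $|L'|\le 2a$. Since only $k\le 6$ centres contribute, $F$ is $12a$-Lipschitz. We then cover $T\setminus B(0,a)$ by a grid of mesh $h$ and evaluate $F$ at each grid point, with a margin of $12a\cdot h/\sqrt2$. We also verify the claim directly on the arc $|x|=a$.

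The main obstacle is that the margin may be thin. If $\min F$ is only slightly above $0.7713$, the grid must be fine, and the check becomes a tedious, essentially computer-assisted evaluation. I would first evaluate $F$ at the two extremal candidates, $(1/2,\sqrt3/6)$ and $(a,0)$, together with the midpoint $(1/2,0)$, to estimate the slack before choosing the mesh size. If the slack is too small, a cheaper alternative is to lower-bound $F$ using only the three nearest centres and to use monotonicity of $L$ in $\rho$ on subregions.
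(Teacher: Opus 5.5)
Your plan is essentially the paper's proof: reduce by the symmetries of $\Lambda$ to $T\setminus B(0,a)$, use a Lipschitz bound for $x\mapsto\mu(X\cap B(x,r))$, and certify the inequality from finitely many evaluations. The paper differs only in using the sharper, $X$-independent constant $2r$ in place of your lens-derivative constant $12a$, and an adaptive cover of $T\setminus X$ by 16 disks (radii proportional to the local slack) in place of a uniform grid.

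One correction to your heuristics, from my own rough hand evaluations of the lens formula rather than the paper's table. The deep hole is comfortable: $f\approx0.547$, where $f=\mu(X\cap B(x,r))/(\pi r^2)$. The minimum over $T\setminus X$ instead appears to sit on the arc $|x|=a$ at the corner $(a\cos\frac{\pi}{6},a\sin\frac{\pi}{6})\approx(0.320,0.185)$, where $f\approx0.515$. So the slack there is only about $0.022$ in area, and with your constant $12a$ you would need a mesh of roughly $h\le0.007$ near that corner (still a routine computer check).
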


\begin{proof}
    Let $f\colon\RR^2\to[0,1]$ be defined by
    \[f(x)=\frac{\mu(X\cap B(x,r))}{\mu(B(x,r))};\]
    we must show that $f(x)>0.501$ for each $x\not\in X$.
    We observe that
    \[\abs{f(x)-f(y)}=\frac{\abs{\mu(X\cap B(x,r))-\mu(X\cap B(y,r))}}{\pi r^2}\leq\frac{\mu(B(x,r)\triangle B(y,r))}{\pi r^2},\]
    where $\triangle$ denotes the symmetric difference.
    Any line parallel to $x-y$ intersects the symmetric difference $B(x,r)\triangle B(y,r)$ in at most two segments of length each at most $\lVert x-y\rVert$, and so 
    \[\mu(B(x,r)\triangle B(y,r))\leq 2r\lVert x-y\rVert.\]
    We conclude that $f$ is $2/(\pi r)$-Lipschitz.

    The set $X$ admits both translational symmetry of by the lattice $\Lambda$ and dihedral symmetry (via the dihedral group of order $12$) about $(0,0)$. 
    One fundamental domain of this isometry group is the triangle $T$ bounded by the three points $\{(0,0),(1/2,0),(1/2,\sqrt3/6)\}$. The fundamental domain $T$ is pictured in \cref{fig:triangular-lattice}.

    Given these two properties, it is not hard to devise a computer check to verify the correctness of the lemma.
    Our chosen approach is to provide a ``proof by picture:'' a finite set $S$, described in \cref{fig:S-cover}, of points in $T\setminus X$ on which $f(s)>0.501$ such that the union of disks
    \[\bigcup_{s\in S}B\left(s,\frac{f(s)-0.501}{2/(\pi r)}\right)\]
    covers $T\setminus X$. Given explicit formulas for the area of the intersection between two circles, it is not hard to compute $f(s)$ for any fixed $s$. We exhibit the results of this computation in \cref{fig:S-cover} as well.
    
\begin{figure}
\begin{minipage}{0.55\textwidth}
\centering
\begin{tikzpicture}[scale=26]

\def\r{0.37}
\def\R{0.7}

\begin{scope}
\clip (0.3,-0.04) rectangle (0.54,0.32);


\draw (0,0) -- (0.5,0) -- (0.5,0.288675) -- cycle
  (0,0) circle (\r);

\foreach \x/\y/\f in {
{0.335/0.18/0.519111},
{0.347/0.15/0.521093},
{0.365/0.112/0.525619},
{0.369/0.194/0.528549},
{0.375/0.063/0.528383},
{0.386/0.119/0.53061},
{0.39/0.16/0.53132},
{0.392/0.014/0.532733},
{0.42/0.226/0.53975},
{0.425/0.07/0.538115},
{0.432/0.138/0.537686},
{0.436/0.19/0.538162},
{0.463/0.015/0.542647},
{0.483/0.159/0.540848},
{0.484/0.08/0.542685},
{0.49/0.243/0.545276},
}{
  \pgfmathsetmacro{\rad}{(\f-0.501)*pi*\R/2}
  \filldraw[nearly transparent,gray!60] (\x,\y) circle (\rad);
  \draw (\x,\y) circle (\rad);
}
\end{scope}
\end{tikzpicture}
\end{minipage}
\hfill
\begin{minipage}{0.4\textwidth}
    
\begin{tabular}{c|c}
$s$ & $f(s)$ \\
\hline
(0.335, 0.18) & 0.5191 \\
(0.347, 0.15) & 0.5211 \\
(0.365, 0.112) & 0.5256 \\
(0.369, 0.194) & 0.5285 \\
(0.375, 0.063) & 0.5284 \\
(0.386, 0.119) & 0.5306 \\
(0.39, 0.16) & 0.5313 \\
(0.392, 0.014) & 0.5327 \\
(0.42, 0.226) & 0.5398 \\
(0.425, 0.07) & 0.5381 \\
(0.432, 0.138) & 0.5377 \\
(0.436, 0.19) & 0.5382 \\
(0.463, 0.015) & 0.5426 \\
(0.483, 0.159) & 0.5408 \\
(0.484, 0.08) & 0.5427 \\
(0.49, 0.243) & 0.5453 \\
\end{tabular}
\end{minipage}
\caption{The circles centered at points $s\in S$ of radii $\frac{\pi r}2(f(s)-0.501)$, which cover $T\setminus X$, and the corresponding values of $f(s)$, rounded to four decimal places.}
\label{fig:S-cover}
\end{figure}

Having exhibited such a set $S$, there is for every $x\in T\setminus X$ some $s\in S$ for which
    \[\lVert x-s\Vert<\frac{f(s)-0.501}{2/(\pi r)};\]
    from this and the fact that $f$ is $2/(\pi r)$-Lipschitz, we conclude that $f(x)>0.501$ for each such~$x$. 
\end{proof}

\begin{proof}[Proof of \cref{prop:geometricquestion}]
    We will assume $r<10^{-5}$.
    Let $a=37r/70$ and $\ell=10r/7$.
    We construct our set $X$ as follows:
    Let $\Lambda\subset\RR^2$ be the lattice $\ZZ(1,0)+\ZZ(1/2,\sqrt3/2)$, and let $\Lambda_0$ be the intersection of the lattice $\ell\Lambda$ with the square $Q:=[0.0005,0.9995]^2$. Let $X_0=\Lambda_0+B(0,a)$ be the union of the disks of radius $a$ centered at points of $\Lambda_0$ (which are disjoint, since $a<\ell/2$).
    The set $X_0$ lies in the square $[10^{-5},1-10^{-5}]$, and so its image $X_1$ under the projection $\RR^2\to(\RR/\ZZ)^2$ has area equal to that of $X_0$.
    Finally, we set
    \[X=X_1\cup\big((\RR/\ZZ)^2\setminus Q).\]
    
    We now verify (1). 
    The parallelograms $x+[0,\ell](1,0)+[0,\ell](1/2,\sqrt3/2)$ are disjoint for $x\in \Lambda_0$, and they are each contained in the square $[0,1]^2$. 
    As each such parallelogram has area $(\sqrt3/2)\ell^2>1.767r^2$, we conclude that
    \[\abs{\Lambda_0}\leq\frac{1}{1.767r^2}.\]
    We conclude that
    \[\mu(X_1)=\mu(X_0)\leq\frac{\pi a^2}{1.767r^2}<0.497,\]
    and thus
    \[\mu(X)\leq(1-\mu(Q))+\mu(X_1)<0.499.\]
    What remains is to verify (2). Each point $x\in(\RR/\ZZ)^2\setminus X$ may be lifted to some point $\tilde x\in Q\setminus X_0$. Since $r<10^{-4}$, the circle of radius $r$ centered around $\tilde x$ lies entirely within $[0,1]^2$. We conclude that
    \[\mu(X\cap B(x,r))\geq\mu\big((\ell\Lambda+B(0,a))\cap B(\tilde x,r)\big).\]
    Since $\tilde x\not\in \ell\Lambda+B(0,a))$, \cref{lem:circle-inter} says precisely that the area of the intersection $(\ell\Lambda+B(0,a))\cap B(\tilde x,r)$ is at least $0.501(\pi r^2)$. This is enough to prove (2).
\end{proof}

\end{document}